\setlist[enumerate]{parsep=0pt plus 4pt,topsep=0pt plus 4pt}
\definecolor{darkblue}{RGB}{0,0,160}
\newcommand{\excise}[1]{}%{$\star$\textsc{#1}$\star$}
\newtheorem{thm}{Theorem}[section]
\newtheorem{lemma}[thm]{Lemma}
\newtheorem{cor}[thm]{Corollary}
\newtheorem{prop}[thm]{Proposition}
\theoremstyle{definition}
\newtheorem{example}[thm]{Example}
\newtheorem{remark}[thm]{Remark}
\newtheorem{defn}[thm]{Definition}
\numberwithin{equation}{section}
\newcounter{separated}
\newcommand{\Ring}[1]{\ensuremath{\mathbb{#1}}}
\renewcommand\>{\rangle}
\newcommand\0{\mathbf{0}}
\newcommand\NN{\Ring{N}}
\newcommand\QQ{{\mathbb Q}}
\newcommand\RR{{\mathbb R}}
\newcommand\ZZ{{\mathbb Z}}
\newcommand\bb{{\mathbf b}}
\newcommand\ee{{\mathbf e}}
\newcommand\ii{{\mathbf i}}
\newcommand\jj{{\mathbf j}}
\newcommand\kk{\Bbbk}
\newcommand\mm{{\mathfrak m}}
\newcommand\qq{{\mathbf q}}
\newcommand\xx{{\mathbf x}}
\newcommand\cH{H}%{{\mathcal H}}
\newcommand\cM{M}%{{\mathcal M}}
\newcommand\cN{N}%{{\mathcal N}}
\newcommand\cP{P}%{{\mathcal P}}
\newcommand\cQ{Q}%{{\mathcal Q}}
\newcommand\cZ{Z}%{{\mathcal Z}}
\newcommand\ob{\hspace{.3ex}^{\raisebox{.8ex}{$\scriptscriptstyle\circ$}}%
	\hspace{-1.22ex}b}
\newcommand\vC{\check{\mathcal C}}
\renewcommand\aa{{\mathbf a}}
\renewcommand\phi{\varphi}
\newcommand\too{\longrightarrow}
\newcommand\into{\hookrightarrow}
\newcommand\otni{\hookleftarrow}
\newcommand\onto{\twoheadrightarrow}
\newcommand\spot{{\hbox{\raisebox{1pt}{\tiny$\scriptscriptstyle\bullet$}}}}
\newcommand\minus{\smallsetminus}
\newcommand\cupdot{\ensuremath{\mathbin{\mathaccent\cdot\cup}}}
\newcommand\goesto{\rightsquigarrow}
\newcommand\dirlim{\varinjlim}
\newcommand\nothing{\varnothing}
\newcommand\filleftmap{\mathord\leftarrow \mkern-6mu
	\cleaders\hbox{$\mkern-2mu \mathord- \mkern-2mu$}\hfill
	\mkern-6mu \mathord-}
\newcommand\fillrightmap{\mathord- \mkern-6mu
	\cleaders\hbox{$\mkern-2mu \mathord- \mkern-2mu$}\hfill
	\mkern-6mu \mathord\rightarrow}
\newcommand{\lhookdownarrow}{\rotatebox[origin=c]{-90}{$\into$}}
\newcommand{\twoheaddownarrow}{\rotatebox[origin=c]{-90}{$\onto$}}
\renewcommand\iff{\Leftrightarrow}
\renewcommand\epsilon{\varepsilon}
\renewcommand\implies{\Rightarrow}
\newcommand\dx[1][]{\delta^{\hspace{.1ex}\xi}}
\newcommand\ol[1]{{\overline{#1}}}
\definecolor{lightred}{rgb}{1,.3,.3}
\newcommand\red{\color{lightred}}
\newcommand\blu{\color{blue}}
\newcommand{\aoverb}[2]{{\genfrac{}{}{0pt}{1}{#1}{#2}}}
\def\twoline#1#2{\aoverb{\scriptstyle {#1}}{\scriptstyle {#2}}}
\DeclareMathOperator\Hom{Hom} % Hom
\DeclareMathOperator\Mor{Mor} % finitely encoded morphisms of modules
\DeclareMathOperator\coker{coker} % cokernel
\newcommand\monomialmatrix[3]{{
\begin{array}{@{}r@{\:}r@{}c@{}l@{}}
  \begin{array}{@{}c@{}}		%leftmost column
%	\begin{array}{@{}l@{}}\\
%	\end{array}
%	\\
	\begin{array}{@{}r@{}}
	\\
	#1
	\end{array}\!
  \end{array}						
&
  \begin{array}{@{}c@{}}		%big left parenthesis	%(
	\begin{array}{@{}l@{}}\\				%(
	\end{array}						%(
	\\							%(
	\left[\begin{array}{@{}l@{}}				%(
	#3							%(
	\end{array}\!						%(
	\right.							%(
  \end{array}							%(
&
  #2					%top border row
					%interior of matrix
					%use \array command
&
  \begin{array}{@{}c@{}}		%big right parenthesis	%)
	\begin{array}{@{}l@{}}\\				%)
	\end{array}						%)
	\\							%)
	\left.\!\begin{array}{@{}l@{}}				%)
	#3							%)
	\end{array}						%)
	\right]							%)
  \end{array}							%)
\end{array}
}}
\begin{document}%%%%%%%%%%%%%%%%%%%%%%%%%%%%%%%%%%%%%%%%%%%%%%%%%%%%%%%%%
%%%%%%%%%%%%%%%%%%%%%%%%%%%%%%%%%%%%%%%%%%%%%%%%%%%%%%%%%%%%%%%%%%%%%%%%%

\mbox{}
\vspace{-4.3ex}
%vspace{-2ex}
%title{Commutative and homological algebra of modules over posets}
\title[Homological algebra of modules over posets]%
      {Homological algebra of modules over posets}
%author{\vspace{-1.5ex}Ezra Miller\vspace{-2ex}}
\author{Ezra Miller}
\address{Mathematics Department\\Duke University\\Durham, NC 27708}
\urladdr{\url{http://math.duke.edu/people/ezra-miller}}

\makeatletter
  \@namedef{subjclassname@2020}{\textup{2020} Mathematics Subject Classification}
\makeatother
\subjclass[2020]{Primary: 05E40, 13E99, 06B15, 13D02, 55N31, 06A07,
32B20, 14P10, 52B99, 13A02, 13P20, 68W30, 13P25, 62R40, 06A11, 06F20,
06F05, 68T09;
Secondary: 13C99, 05E16, 32S60, 14F07, 62R01, 62H35, 92D15, 92C15,
13F99, 20M14, 14P15, 06B35, 22A25}

\date{10 August 2020}

\begin{abstract}
Homological algebra of modules over posets is developed, as closely
parallel as possible to that of finitely generated modules over
noetherian commutative rings, in the direction of finite presentations
and resolutions.  Centrally at issue is how to define finiteness to
replace the noetherian hypothesis which fails.  The tameness condition
introduced for this purpose captures finiteness for variation in
families of vector spaces indexed by posets in a way that is
characterized equivalently by distinct topological, algebraic,
combinatorial, and homological manifestations.  Tameness serves both
theoretical and computational purposes: it guarantees finite
presentations and resolutions of various sorts, all related by a
syzygy theorem, amenable to algorithmic manipulation.  Tameness and
its homological theory are new even in the finitely generated discrete
setting of $\NN^n$-gradings, where tame is materially weaker than
noetherian.  In the context of persistent homology of filtered
topological spaces, especially with multiple real parameters, the
algebraic theory of tameness yields topologically interpretable data
structures in terms of birth and death of homology classes.
\end{abstract}
\maketitle

%vspace{-4.3ex}
\vspace{-2ex}
\setcounter{tocdepth}{2}
\tableofcontents
%addtocontents{toc}{\vspace{-2ex}}%%%%%%%%%%%%%%%%%%%%%%%%%%%%%%%%%%%%%%%

\mbox{}\vspace{-5.6ex}%%%%%%%%%%%%%%%%%%%%%%%%%%%%%%%%%%%%%%%%%%%%%%%%%%%
\section{Introduction}\label{s:intro}%%%%%%%%%%%%%%%%%%%%%%%%%%%%%%%%%%%%
%%%%%%%%%%%%%%%%%%%%%%%%%%%%%%%%%%%%%%%%%%%%%%%%%%%%%%%%%%%%%%%%%%%%%%%%%

%%%%%%%%%%%%%%%%%%%%%%%%%%%%%%%%%%%%%%%%%%%%%%%%%%%%%%%%%%%%%%%%%%%%%%%%%
\subsection*{Overview}\label{sub:overview}%%%%%%%%%%%%%%%%%%%%%%%%%%%%%%%

A module over a poset is a family of vector spaces indexed by the
poset elements with a homomorphism for each poset relation.  The setup
is inherently commutative: the homomorphism for a poset relation $p
\preceq q$ is the composite of homomorphisms for the relations $p
\preceq r$ and $r \preceq q$ whenever $r$ lies between $p$ and~$q$.
This paper lays the foundation for an extensive theory of modules over
arbitrary posets, with a view toward abstract mathematical theory,
algorithmic challenges, and statistical implications.  The mathematics
includes commutative and homological algebra as they interact with
topological, analytic, algebraic, or polyhedral geometric structure on
the poset, if any is given.  The algorithmic challenges involve
effectively encoding and manipulating arbitrary poset modules.  The
statistical considerations stem from applied topology, where modules
over posets arise from persistent homology.

This installment covers initial homological aspects: the extent to
which modules over posets behave like multigraded modules over
polynomial rings when it comes to finite presentations and
resolutions.  The long-term investigation tests the frontier of
multigraded algebra regarding how far one can get without a ring and
with no hypotheses on the multigrading other than a partial order.
The syzygy theorem for poset modules here vastly generalizes the one
for finitely generated modules over polynomial rings, along the way
introducing finite data structures to enable algorithmic computation.

The poset of utmost interest is the real vector space~$\RR^n$, with
its usual componentwise partial order.  A~module over~$\RR^n$ is
equivalently an $\RR^n$-graded module over the polynomial ring whose
exponents are allowed to be nonnegative real numbers instead of
integers.  In this setting, the noetherian hypothesis fails
spectacularly, and essentially nothing is known about homological
behavior of its category of modules.  The infrastructure developed
here meets the lack of noetherian hypotheses head~on, to open
the~possibility of working directly with modules over~$\RR^n$ and,
with no additional difficulty, arbitrary posets.

The focus, and the most subtle point, is the nature of a suitable
finiteness condition to replace the noetherian hypothesis.  The
\emph{tame} condition introduced here is the natural candidate because
it captures equivalent topological, algebraic, combinatorial, and
homological manifestations of finiteness for variation of vector
spaces parametrized by a poset.  Tameness serves both theoretical and
computational purposes: it guarantees various finite presentations and
resolutions all related by a syzygy theorem, and the data structures
thus produced are amenable to algorithmic manipulation.  Tameness, its
syzygy theorem, and its data structures are new and theoretically as
well as computationally valuable even in the discrete setting over the
poset~$\ZZ^n$, which is ordinary commutative algebra of polynomial
rings, where tame is much weaker than noetherian.

No restriction on the underlying poset is required.  For example, the
lack of local finiteness of~$\RR^n$ is immaterial.  Moreover, in that
particular setting, if the partial orderings and the modules possess
supplementary geometry, be it subanalytic, semialgebraic, or
piecewise-linear, for instance, then the data structures and
transitions between the topological, algebraic, combinatorial, and
homological perspectives take advantage of and preserve the geometry.
% (The subanalytic and piecewise-linear cases are crucial for
% applications \cite{strat-conical} to conjectures of Kashiwara and
% Schapira.)

Beyond the abstract route to graded module theory over real-exponent
polynomial rings and arbitrary posets, one impetus for these
developments lies in data science applications, where the poset
consists of ``parameters'' indexing a family of topological subspaces
of a fixed topological space.  Taking homology of the subspaces in
this topological filtration yields a poset module, called the
persistent homology of the filtration, referring to how homology
classes are born, persist for a while, and then die as the parameter
moves up in the poset.  In ordinary persistent homology, the poset is
totally ordered---usually the real numbers~$\RR$, the integers~$\ZZ$,
or a subset $\{1,\dots,m\}$.  This case is well studied (see
\cite{edelsbrunner-harer2010}, for example), and the algebra is
correspondingly simple \cite{crawley-boevey2015}.  Persistence with
multiple totally ordered parameters, introduced by Carlsson and
Zomorodian \cite{multiparamPH}, has been developed in various ways,
often assuming that the poset is $\NN^n$.  That discrete framework has
been preferred in part because it arises frequently when filtering
finite simplicial complexes, but also because settings involving
continuous parameters unavoidably produce modules that fail to be
finitely presented in several fundamental ways.  Tameness, with its
data structures and syzygy theorem, circumvent these~limitations.

Multigraded algebra can be expressed equivalently in terms of modules,
or sheaves, or functors, or derived categories, and the literature
exhibits all of these.  The exposition throughout this paper is
intentionally kept at the most elementary level, with posets instead
of thin skeletal categories, for instance, and with modules instead of
sheaves or functors on posets.  At the risk of masking the depth of
the content in these enriched contexts, this choice of elementary
language makes the exposition accessible to a wide audience, including
statisticians applying persistent homology in addition to topologists,
combinatorialists, algebraists, geometers, and programmers.

The power of the foundations here is demonstrated by
\cite{strat-conical}, for example, which proves conjectures made by
Kashiwara and Schapira concerning the relationship between subanalytic
and piecewise-linear stratifications of vector spaces and
constructibility of sheaves on real vector spaces in the derived
category with microsupport restricted to a cone; see
\cite[Conjecture~3.17]{kashiwara-schapira2017}
	\footnote{Bibliographic note: this conjecture appears in~v3
	(the version cited here) and earlier versions of the cited
	arXiv preprint.  It does not appear in the published version
	\cite{kashiwara-schapira2018}, which is~v6 on the arXiv.}
%	The published version is cited where it is possible to do so,
%	and v3 \cite{kashiwara-schapira2017} is cited otherwise.
%
and \cite[Conjecture~3.20]{kashiwara-schapira2019}.  The theory here
as well as in \cite{prim-decomp-pogroup, essential-real} was developed
simultaneously and independently from that of Kashiwara and Schapira
\cite{kashiwara-schapira2018},
% would include {kashiwara-schapira2019}, but that part was developed
% substantially after {qr-codes}
cf.~\cite{qr-codes}.  The conical-microsupport theory is roughly
equivalent to the subanalytic special case of poset module theory for
partially ordered real vector spaces, and similarly for the later PL
theory \cite{kashiwara-schapira2019}; this is essentially the content
of \cite{strat-conical}.  A~detailed comparison of the two viewpoints,
including key differences, is left to \cite{strat-conical}, where the
derived sheaf background is reviewed.

%addtocontents{toc}{\protect\setcounter{tocdepth}{0}}%%%%%%%%%%%%%%%%%
\setcounter{tocdepth}{-1}%%%%%%%%%%%%%%%%%%%%%%%%%%%%%%%%%%%%%%%%%%%%%
%%%%%%%%%%%%%%%%%%%%%%%%%%%%%%%%%%%%%%%%%%%%%%%%%%%%%%%%%%%%%%%%%%%%%%
\subsection*{Acknowledgements}\label{sub:acknowledgements}%%%%%%%%%%%%
\setcounter{tocdepth}{2}%%%%%%%%%%%%%%%%%%%%%%%%%%%%%%%%%%%%%%%%%%%%%%
%addtocontents{toc}{\protect\setcounter{tocdepth}{2}}%%%%%%%%%%%%%%%%%

First, a special acknowledgement goes to Ashleigh Thomas, who has been
a long-term collaborator on this project.  She was listed as an author
on earlier drafts of \cite{qr-codes} (of which this is roughly the
first quarter), but her contributions lie more properly beyond these
preliminaries (see \cite{primary-distance}, for example), so she
declined in the end to be named as an author on this installment.
Early in the development of the ideas here, Thomas put her finger on
the continuous rather than discrete nature of multiparameter
persistence modules for fly wings.  She computed the first examples
explicitly, namely those in Example~\ref{e:toy-model-fly-wing}, and
produced the biparameter persistence diagrams there as well as some of
the figures in Example~\ref{e:flange-switching}.

Justin Curry pointed out connections from the combinatorial viewpoint
taken here, in terms of modules over posets, to higher notions in
algebra and category theory, particularly those involving
constructible sheaves, which are in the same vein as Curry's proposed
uses of them in persistence \cite{curry-thesis}; see
Remarks~\ref{r:curry}, \ref{r:indicator}, \ref{r:lurie},
and~\ref{r:kan-extension}.

The author is indebted to David Houle, whose contribution to this
project was seminal and remains ongoing; in particular, he and his lab
produced the fruit fly wing images \cite{houle03}.  Paul Bendich and
Joshua Cruz took part in the genesis of this project, including early
discussions concerning ways to tweak persistent (intersection
\cite{bendich-harer2011}) homology for investigations of fly wings.
Ville Puuska discovered several errors in an early version of
Section~\ref{s:encoding}, resulting in substantial correction and
alteration; see Examples~\ref{e:puuska-nonconstant-isotypic}
and~\ref{e:puuska-nontransitive}.  Banff International Research
Station provided an opportunity for valuable feedback and suggestions
at the workshop there on Topological Data Analysis (August, 2017) as
parts of this research were being completed; many participants,
especially the organizers, Uli Bauer and Anthea Monod, as well as
Michael Lesnick, shared important perspectives and insight.  Thomas
Kahle requested that Proposition~\ref{p:determined} be an equivalence
instead of merely the one implication it had stated.  Hal Schenck gave
helpful comments on an earlier version of the Introduction.  Passages
in Examples~\ref{e:fly-wing-filtration} and~\ref{e:toy-model-fly-wing}
are based on or taken verbatim from~\cite{fruitFlyModuli}.  Portions
of this work were funded by NSF grant~DMS-1702395.

%%%%%%%%%%%%%%%%%%%%%%%%%%%%%%%%%%%%%%%%%%%%%%%%%%%%%%%%%%%%%%%%%%%%%%%%%
\subsection{Modules over posets}\label{sub:modules}%%%%%%%%%%%%%%%%%%%%%%

There are many essentially equivalent ways to think of a poset module.
The definition in the first line of this Introduction is among the
more elementary formulations; see Definition~\ref{d:poset-module} for
additional precision.  Others include a
\begin{itemize}
\item%
representation of a poset \cite{nazarova-roiter};

\item%
functor from a poset to the category of vector spaces (e.g., see
\cite{curry2019});

\item%
vector-space valued sheaf on a poset (e.g., see
\cite[\S4.2]{curry-thesis} or \cite[\S3.3]{strat-conical});

\item%
representation of a quiver with (commutative) relations (e.g., see
\cite[\S A.6]{oudot2015});

% \item%
% module over a path algebra modulo transitivity ideal (e.g., see
% \cite[\S A.6]{oudot2015});

\item%
representation of the incidence algebra of a poset
\cite{doubilet-rota-stanley1972}; or

\item%
module over a directed acyclic graph \cite{chambers-letscher2018}.
\end{itemize}
The premise here is that commutative algebra provides an elemental
framework out of which flows corresponding structure in these other
contexts, in which the reader is encouraged to interpret all of the
results.  \cite{strat-conical} provides an example of how that can
look, in that case from sheaf perspectives.  Expressing the
foundations via commutative algebra is natural for its infrastructure
surrounding resolutions.  And as the objects are merely graded vector
spaces with linear maps among them---there are no rings to act---it is
also the most elementary language available.

Some of the formulations of poset modules are only valid when the
poset is assumed to be locally finite (see
\cite{doubilet-rota-stanley1972}, for instance), or when the object
being acted upon satisfies a finitary hypothesis
\cite{khripchenko-novikov2009} in which the algebraic information is
nonzero on only finitely many points in any interval.  This is not a
failing of any particular formulation, but rather a signal that the
theory has a different focus.  Combinatorial formulations are built
for enumeration.  Representation theories are built for decomposition
into and classification of irreducibles.  While commutative algebra
appreciates a direct sum decomposition when one is available, such as
over a noetherian ring of dimension~$0$, its initial impulse is to
relate arbitrary modules to simpler ones by less restrictive
decomposition, such as primary decomposition, or by resolution, such
as by projective or injective modules.  That is the tack taken here.

%%%%%%%%%%%%%%%%%%%%%%%%%%%%%%%%%%%%%%%%%%%%%%%%%%%%%%%%%%%%%%%%%%%%%%%%%
\subsection{Topological tameness}\label{sub:tame}%%%%%%%%%%%%%%%%%%%%%%%%

The \emph{tame} condition (Definitions~\ref{d:constant-subdivision}
and~\ref{d:tame}) on a module~$M$ stipulates that the poset admit a
partition into finitely many domains of constancy for~$M$.  This
finiteness generalizes topological tameness for persistent homology in
a single parameter (see \cite[\S3.8]{chazal-deSilva-glisse-oudot2016},
for example), reflecting the intuitive notion that given a filtration
of a topological space from data, only finitely many topologies should
appear.  Tameness is thus a topological concept, designed to control
the size and variation of homology groups of subspaces in a fixed
topological~space.

\begin{example}\label{e:fly-wing-filtration}
\enlargethispage*{2.2ex}%
Let $\cQ = \RR_- \times \RR_+$ with the coordinatewise partial order,
so $(r,s) \in \cQ$ for any nonnegative real numbers~$-r$ and~$s$.  Let
$X = \RR^2$ be the plane containing an embedded planar graph.  Define
$X_{rs} \subseteq X$ to be the set of points at distance at least~$-r$
from every vertex and within~$s$ of some edge.  Thus $X_{rs}$ is
obtained by removing the union of the balls of radius~$r$ around the
vertices from the union of $s$-neighborhoods of the edges.  In the
following portion of an embedded graph, $-r$ is approximately
twice~$s$:
$$%
\includegraphics[height=23mm]{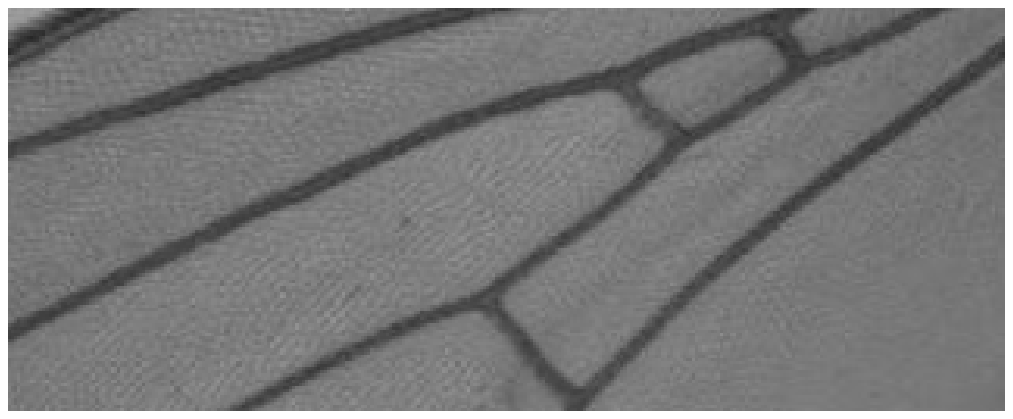}
\quad
\raisebox{10mm}{$\goesto$}
\quad
\includegraphics[height=23mm]{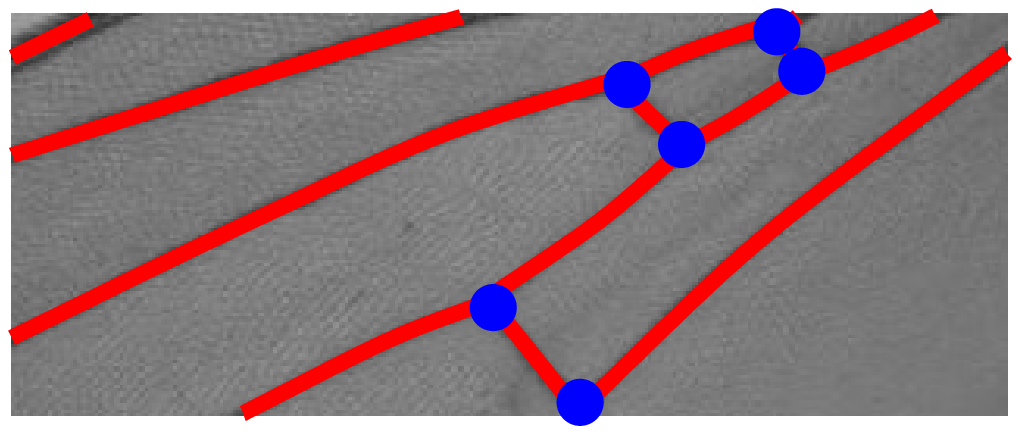}
\vspace{-1ex}
$$
The biparameter persistent homology module $\cM_{rs} = H_0(X_{rs})$
summarizes the geometry of the embedded planar graph.
\end{example}

Relevant properties of these modules are best highlighted in a
simplified setting.

\begin{example}\label{e:toy-model-fly-wing}
Using the setup from Example~\ref{e:fly-wing-filtration}, the zeroth
persistent homology for the toy-model embedded graph at left in
Figure~\ref{f:toy-model-fly-wing} is the $\RR^2$-module $\cM$ shown at
\begin{figure}[ht]
\vspace{-8.5ex}
$$%
\begin{array}[b]{@{}c@{}}
\mbox{}\\[28.5pt]
\includegraphics[height=30mm]{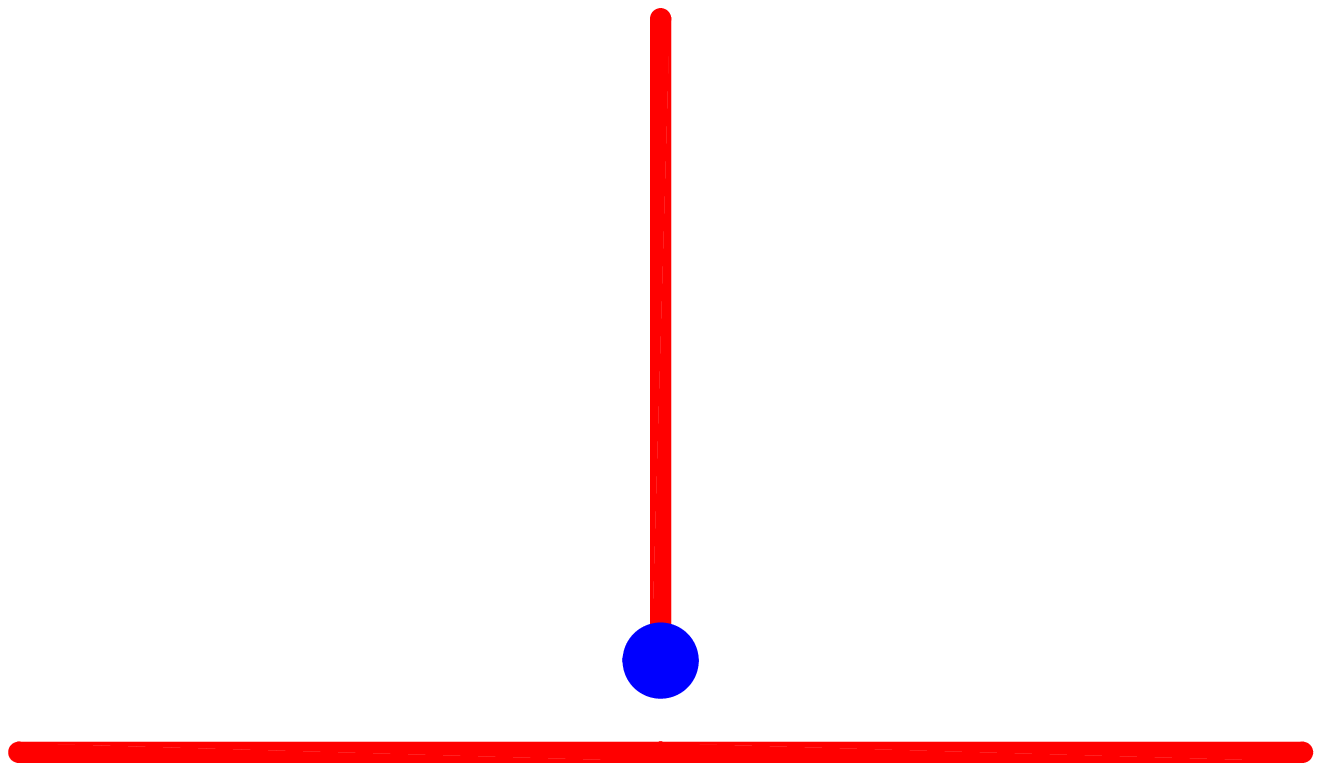}
\\[-22.5pt]
\end{array}
\begin{array}[b]{@{\ \ }c@{}}
\hspace{-3pt}\goesto\\[7mm]\mbox{}
\end{array}
\qquad
\begin{array}[b]{@{\hspace{-10pt}}r@{\hspace{-10pt}}|@{}l@{}}
\begin{array}{@{}c@{}}
\psfrag{r}{\tiny$r \to$}
\psfrag{s}{\tiny$\begin{array}{@{}c@{}}\uparrow\\[-.5ex]s\end{array}$}
\includegraphics[height=30mm]{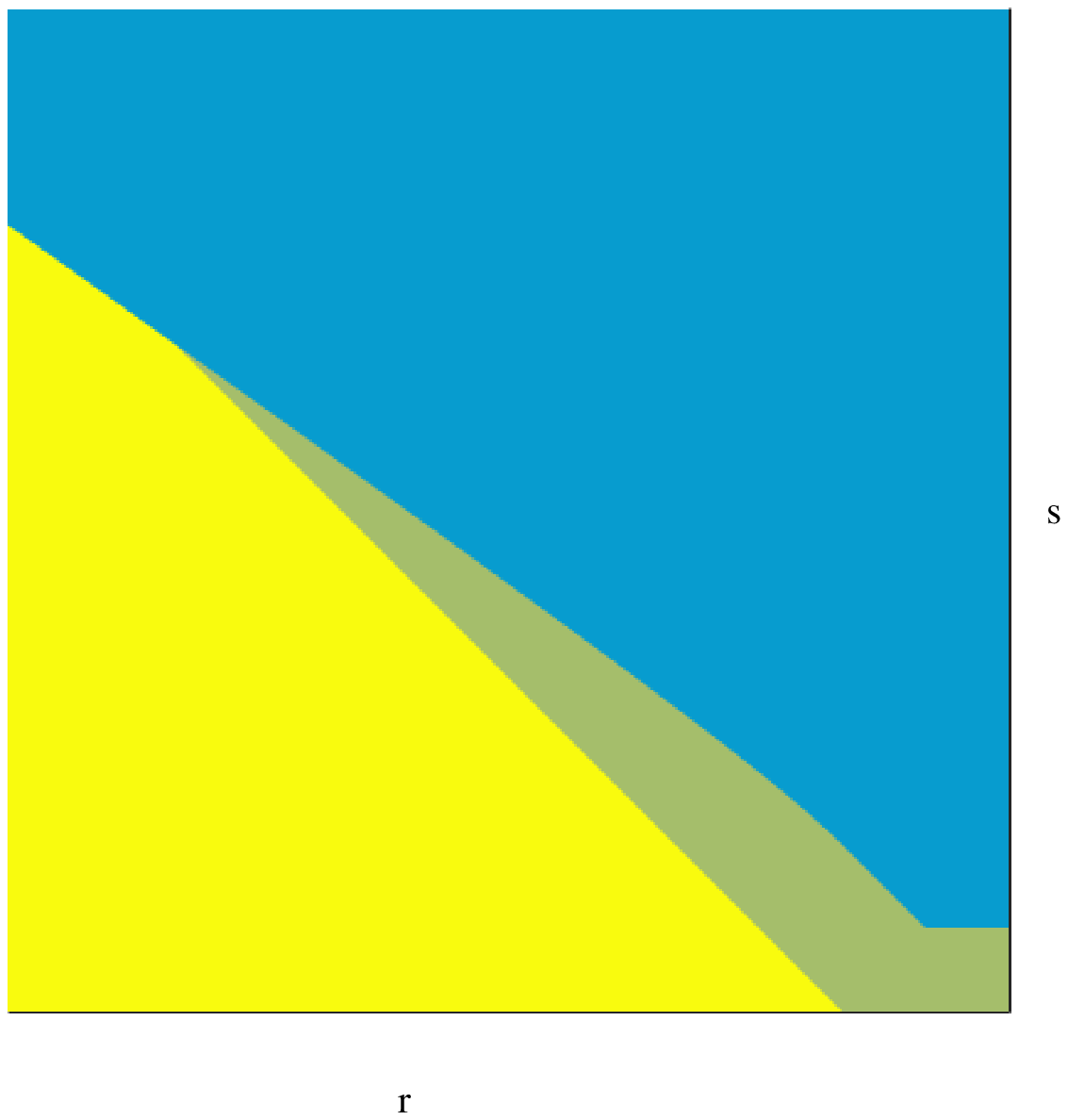}\\[-7.7pt]
\end{array}
&\,\,\,\\[-6pt]\hline
\end{array}
\qquad
\begin{array}[b]{@{\ \ }c@{}}
\hspace{-3pt}\goesto\\[7mm]\mbox{}
\end{array}
\qquad
\begin{array}[b]{@{\hspace{-10pt}}r@{\hspace{-10pt}}|@{}l@{}}
\begin{array}{@{}c@{}}
\psfrag{1}{\tiny$\kk$}
\psfrag{2}{\tiny$\kk^2$}
\psfrag{3}{\tiny$\kk^3$}
\includegraphics[height=30mm]{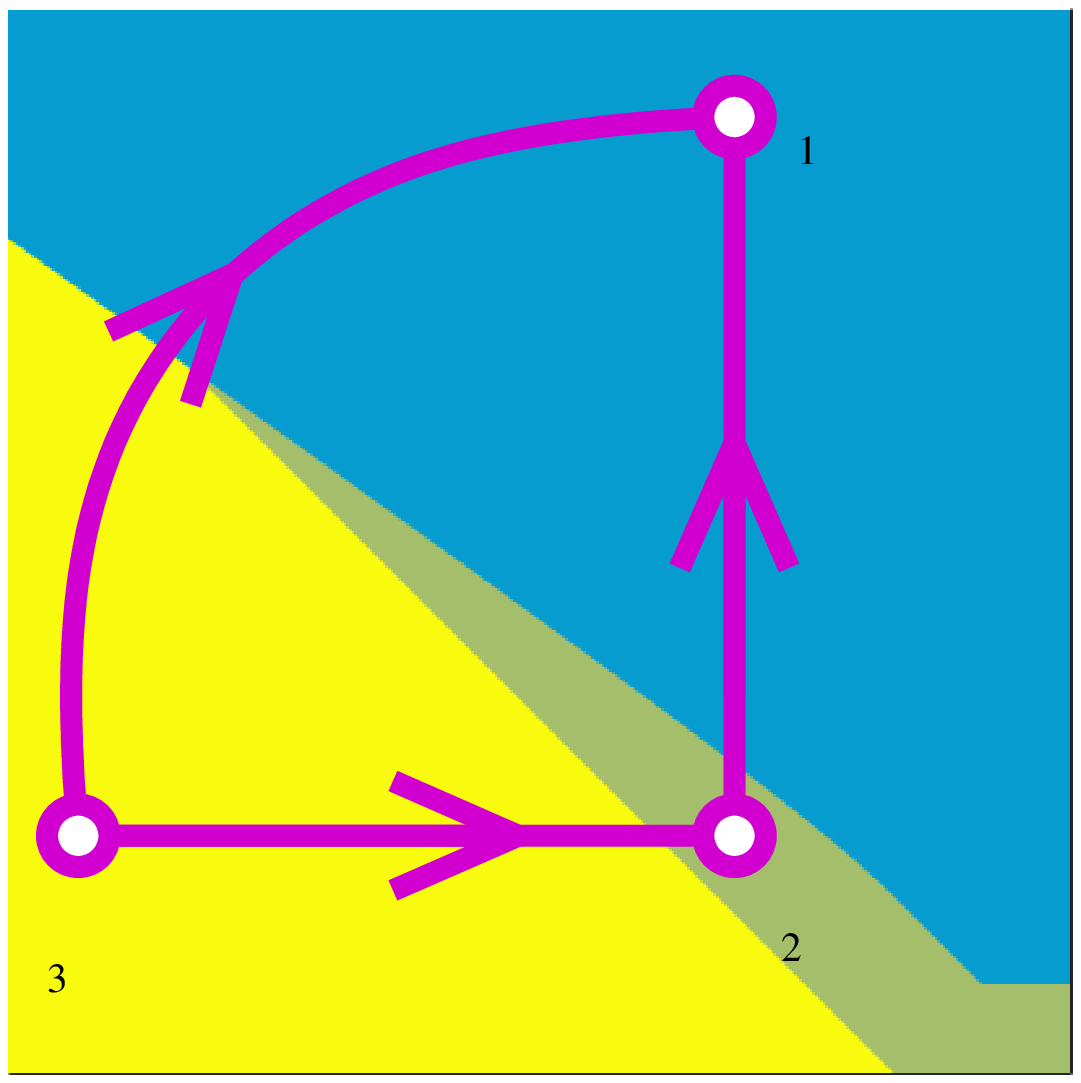}\\[-7.7pt]
\end{array}
&\,\,\,\\[-6pt]\hline
\end{array}
\vspace{-2ex}
$$
\caption{$\RR^2$-module and finite encoding\vspace{-1ex}}
\label{f:toy-model-fly-wing}
\end{figure}
center.  Each point of $\RR^2$ is colored according to the dimension
of its associated vector space in~$\cM$, namely $3$, $2$, or~$1$
proceeding up (increasing~$s$) and to the right (increasing~$r$).  The
structure homomorphisms $\cM_{rs} \to \cM_{r's'}$ are all surjective.

This $\RR^2$-module fails to be finitely presented for three
fundamental reasons.  First, the three generators sit infinitely far
back along the $r$-axis.  (Fiddling with the sign on~$r$ does not
help: the natural maps on homology proceed from infinitely large
radius to~$0$ regardless of how the picture is drawn.)  Second, the
relations that specify the transition from vector spaces of
dimension~$3$ to those of dimension~$2$ or~$1$ lie along a real
algebraic curve, as do those specifying the transition from
dimension~$2$ to dimension~$1$.  These curves have uncountably many
points.  Third, even if the relations are discretized---restrict~$\cM$
to a lattice $\ZZ^2$ superimposed on~$\RR^2$, say---the relations
march off to infinity roughly diagonally away from the origin.  (See
Example~\ref{e:encoding} for the right-hand~image.)

Nonetheless, the $\RR^2$-module here is tame, with four constant
regions: over the bottom-left region (yellow) the vector space
is~$\kk^3$; over the middle (olive) region the vector space
is~$\kk^2$; over the upper-right (blue) region the vector space
is~$\kk$; and over the remainder of~$\RR^2$ the vector space is~$0$.
The homomorphisms between these vector spaces do not depend on which
points in the regions are selected to represent them.  For instance,
$\kk^3 \to \kk^2$ always identifies the two basis vectors
corresponding to the connected components that are the left and right
halves of the horizontally~infinite~red~strip.
\end{example}

In principle, tameness can be reworked to serve directly as a data
structure for algorithmic computation, especially in the presence of
an auxiliary hypothesis to regulate the geometry of the constant
regions---when they are semialgebraic or piecewise linear
(Definition~\ref{d:auxiliary-hypotheses}.\ref{i:semialgebraic}
or~\ref{d:auxiliary-hypotheses}.\ref{i:PL}), for example.  The
algorithms would generalize those for polyhedral ``sectors'' in the
discrete case \cite{injAlg} (or see~\cite[Chapter~13]{cca}).

%%%%%%%%%%%%%%%%%%%%%%%%%%%%%%%%%%%%%%%%%%%%%%%%%%%%%%%%%%%%%%%%%%%%%%
\subsection{Combinatorial tameness: finite encoding}\label{sub:combin-tame}

Whereas the topological notion of tameness requires little more than
an arbitrary subdivision of the poset into regions of constancy
(Definition~\ref{d:constant-subdivision}), the combinatorial
incarnation imposes additional structure on the constant regions,
namely that they should be partially ordered in a natural way.  More
precisely, it stipulates that the module~$\cM$ should be pulled back
from a $\cP$-module along a poset morphism $\cQ \to \cP$ in which
$\cP$ is a finite poset and the $\cP$-module has finite dimension as a
vector space over the field~$\kk$ (Definition~\ref{d:encoding}).

\begin{example}\label{e:encoding}
The right-hand image in Example~\ref{e:toy-model-fly-wing} is a finite
encoding of~$\cM$ by a three-element poset $\cP$ and the $\cP$-module
$H = \kk^3 \oplus \kk^2 \oplus \kk$ with each arrow in the image
corresponding to a full-rank map between summands of~$H$.
Technically, this is only an encoding of~$\cM$ as a module over $\cQ =
\RR_- \times \RR_+$.  The poset morphism $\cQ \to \cP$ takes all of
the yellow rank~$3$ points to the bottom element of~$\cP$, the olive
rank~$2$ points to the middle element of~$\cP$, and the blue rank~$1$
points to the top element of~$\cP$.  (To make this work over all
of~$\RR^2$, the region with vector space dimension~$0$ would have to
be subdivided, for instance by introducing an antidiagonal extending
downward from the origin, thus yielding a morphism from~$\RR^2$ to a
five-element poset.)  This encoding is semialgebraic
(Definition~\ref{d:auxiliary-hypotheses}): its fibers are real
semialgebraic sets.
\end{example}

In general, constant regions need not be situated in a manner that
makes them the fibers of a poset morphism (Example~\ref{e:subdivide}).
Nonetheless, over arbitrary posets, modules that are tame by virtue of
admitting a finite constant subdivision (Definition~\ref{d:tame})
always admit finite encodings (Theorem~\ref{t:tame}), although given
constant regions are typically subdivided by the constructed encoding
poset morphism.  This implication is what demands precision in the
definition of tame via constant subdivision; it makes subtle use of
the no-monodromy condition in Definition~\ref{d:constant-subdivision}.
In the case where the poset is a real vector space, if the constant
regions have additional geometry
(Definition~\ref{d:auxiliary-hypotheses}), then a similarly geometric
finite encoding is possible
(Theorem~\ref{t:tame}.\ref{i:auxiliary-uptight}).

\enlargethispage{.1ex}
\begin{remark}\label{r:filter}
Filtrations of finite simplicial complexes by products of intervals
yield persistent homology modules that are not naturally modules over
a polynomial ring in~$n$ (or any finite number of) variables.  This is
for the same reason that single-parameter persistent homology is not
naturally a module over a polynomial ring in one variable: though
there might only be finitely many topological transitions, they can
(and often do) occur at incommensurable real numbers.  That said,
filtering a finite simplicial complex automatically induces a finite
encoding.  Indeed, the parameter space maps to the poset of simplicial
subcomplexes of the original simplicial complex by sending a parameter
to the simplicial subcomplex it indexes.
% That is not the smallest poset, of course, but it leads to a
% fundamental point: one can and should do homological algebra over
% the finite encoding poset rather than (only) over the original
% parameter space.
\end{remark}

\begin{remark}\label{r:exit-path}
The framework of poset modules arising from filtrations of topological
spaces is more or less an instance of MacPherson's exit path category
\cite[\S1.1]{treumann2009}.  In that context, Lurie defined a notion
of constructibility in the Alexandrov topology \cite[Definitions~A.5.1
and~A.5.2]{lurie2017}, independently of the developments here and for
different purposes.  It would be reasonable to speculate that tameness
should correspond to Alexandrov constructibility, given that encoding
of a poset module is defined by pulling back along a poset morphism
(in Lurie's language, a continuous morphism of posets), but it does
not; see Remark~\ref{r:lurie}.  The difference between constant in the
sense of tameness via constant subdivision
(Section~\ref{sub:constant}) and locally constant in the
sheaf-theoretic sense with the Alexandrov topology makes tameness---in
the equivalent finitely encoded formulation---rather than Alexandrov
constructibility the right notion of finiteness for the syzygy theorem
as well as for algorithmic computation and data analysis applications
of persistent homology.  That contrasts with the comparison between
tameness and subanalytic constructibility in the usual topology on
real vector spaces, which are essentially the same notion for the
relevant sheaves; see \cite[{\rm\S}4]{strat-conical}.
\end{remark}

%%%%%%%%%%%%%%%%%%%%%%%%%%%%%%%%%%%%%%%%%%%%%%%%%%%%%%%%%%%%%%%%%%%%%%%%%
\subsection{Algebraic tameness:~fringe presentation}\label{sub:alg-tame}%

To compute with poset modules algebraically, in theoretical as well as
algorithmic senses, requires presentations.  When the poset
is~$\ZZ^n$, so the modules are multigraded over the usual polynomial
ring in~$n$~variables, free presentations are available.  But over
arbitrary posets, there are no free modules, and even when there are,
requiring finite presentation is unreasonably restrictive,
cf.~Example~\ref{e:toy-model-fly-wing}.  Furthermore, there is nothing
special about generators (in topological language, ``births'') as
opposed to cogenerators (``deaths'').  These issues are all resolved
by (i)~using arbitrary upsets instead the right-angled principal
upsets that give rise to free modules and (ii)~symmetrically involving
downsets.  The resulting notion of \emph{fringe presentation}
(Definition~\ref{d:fringe}) is a homomorphisms from a direct sum of
interval modules for upsets to a direct sum of interval modules for
downsets.

Fringe presentation is expressed by a \emph{monomial matrix}
(Definition~\ref{d:monomial-matrix-fr}): an array of scalars with rows
labeled by upsets and columns labeled by downsets.

\begin{example}\label{e:fringe}
Over the poset~$\RR^2$, the monomial matrix
\vspace{7ex}
$$%
\monomialmatrix
	{\begin{array}[t]{@{}r@{\hspace{-3.1pt}}|@{}l@{}}
	 \includegraphics[height=15mm]{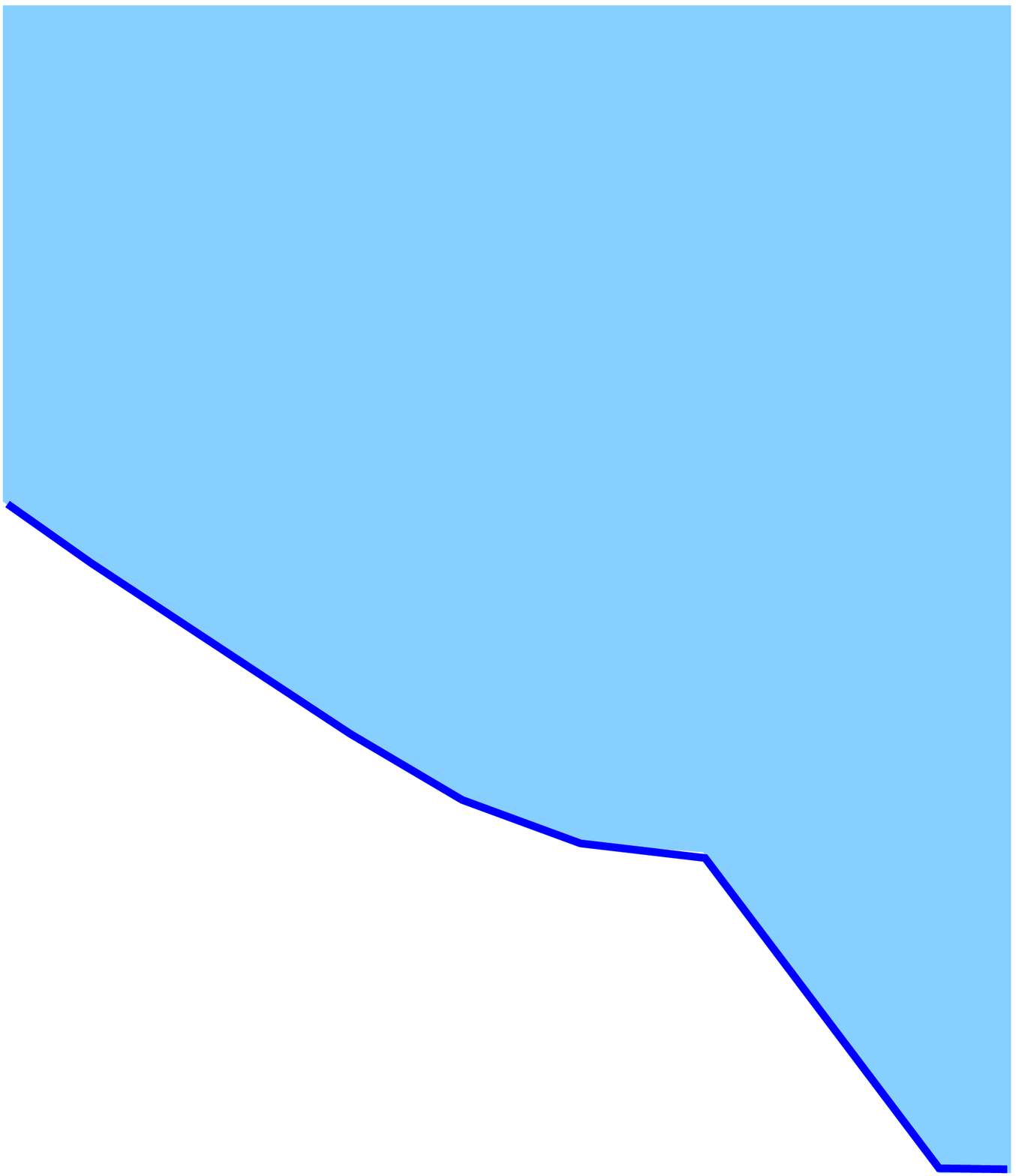}&\ \hspace{-.2pt}\\[-4.3pt]\hline
	\end{array}}
	{\!\!
	 \begin{array}{c}
	 \\[-10ex]
	 \begin{array}[b]{@{}r@{\hspace{-.2pt}}|@{}l@{}}
		\raisebox{-5mm}{\includegraphics[height=17mm]{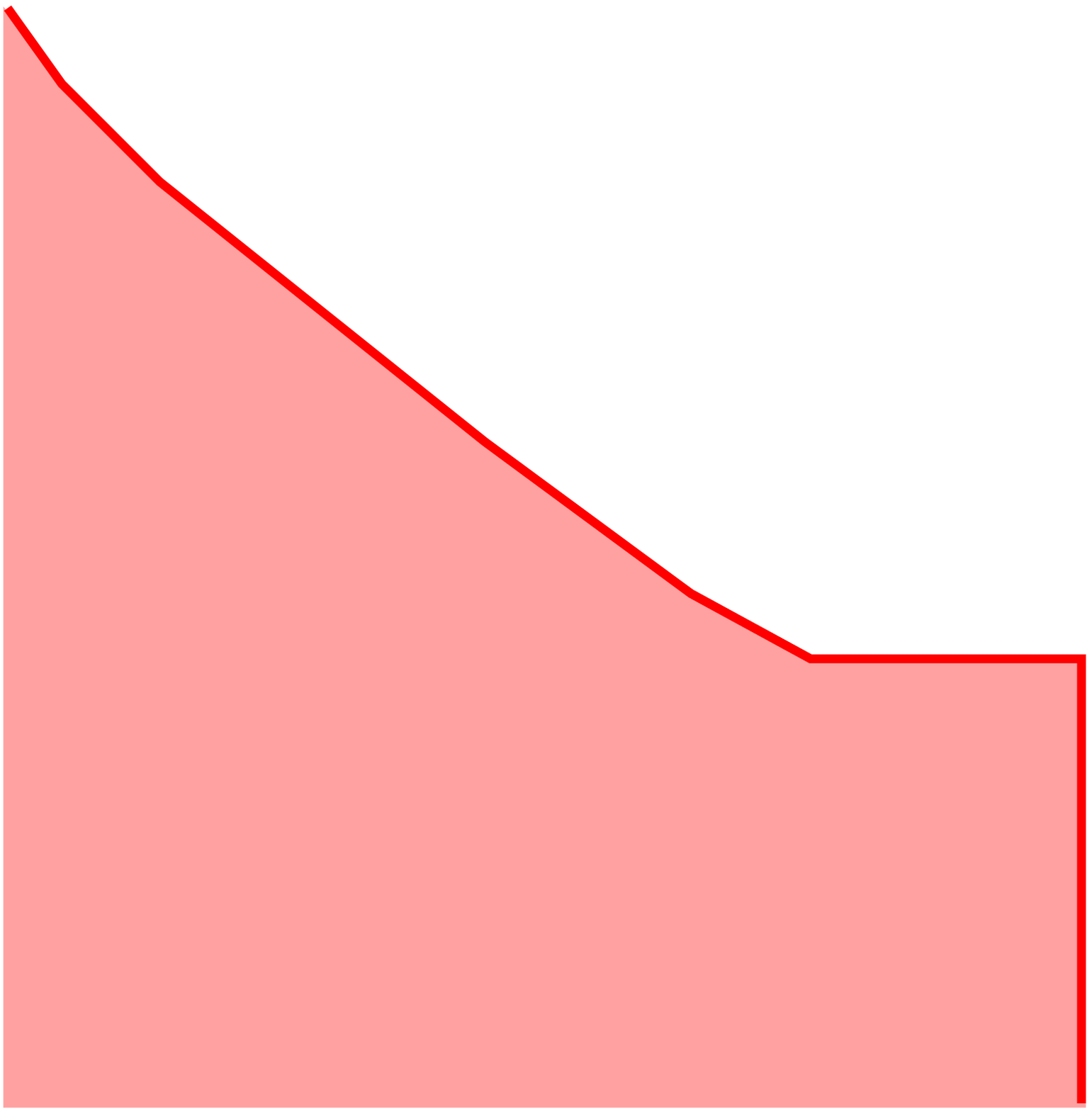}}
		&\ \,\\[-6.3pt]\hline
	 \end{array}
	 \!\!
	 \\[4ex]
	 \phi_{11}
	 \\[3ex]
	 \end{array}}
	{\\\\\\}
\begin{array}{@{}c@{}}
\hspace{.1pt}\ \text{represents a fringe presentation of}\ \hspace{.2pt}
\cM = \kk\!\!
\left[
\begin{array}{@{\ }c@{\,}}
\\[-2.2ex]
\begin{array}{@{}r@{\hspace{-.4pt}}|@{}l@{}}
\includegraphics[height=15mm]{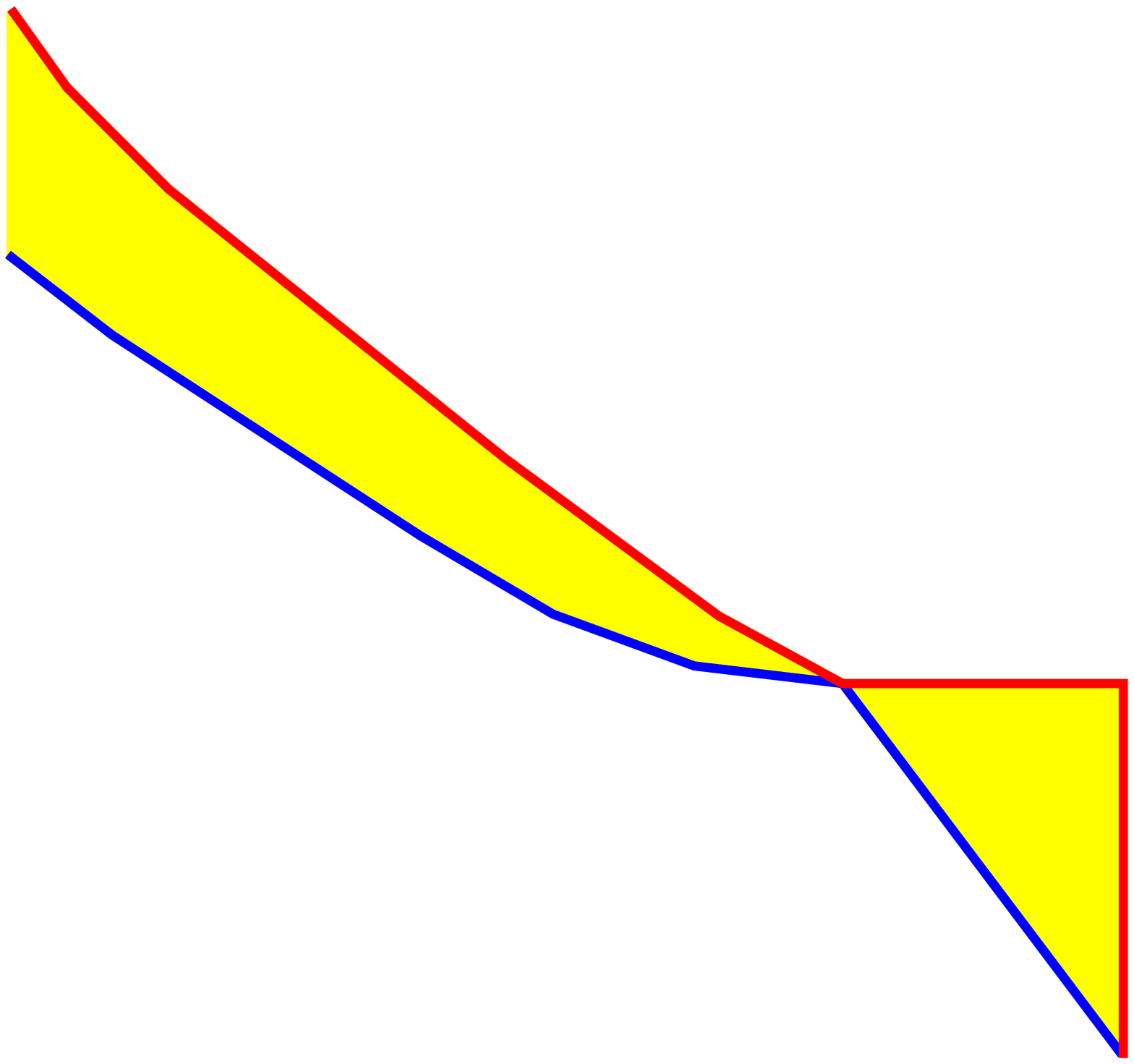}&\ \,\hspace{-.3pt}\\[-4.7pt]\hline
\end{array}
\\[-1ex]\mbox{}
\end{array}
\right]
\\[-3ex]
\end{array}
\vspace{.5ex}
$$
as long as $\phi_{11} \in \kk$ is nonzero.  That is, the monomial
matrix specifies a homomorphism \( \kk\bigl[\,
  \begin{array}{@{}c@{}}
  \\[-3ex]
  \begin{array}{@{}r@{\hspace{-1.4pt}}|@{}l@{}}
  \raisebox{-2.2pt}{\includegraphics[height=5mm]{blue-upset}}
  &\hspace{1.7pt}\\[-2pt]\hline
  \end{array}\,
  \end{array}
  \bigr]\!
\to
  \kk\bigl[\,
  \begin{array}{@{}c@{}}
  \\[-3ex]
  \begin{array}{@{}r@{\hspace{-.2pt}}|@{}l@{}}
  \raisebox{-6.2pt}{\includegraphics[height=5mm]{red-downset}}
  &\hspace{1.5pt}\\[-2pt]\hline
  \end{array}
  \end{array}
  \,\bigr]
\)
with image~$\cM$, which has $\cM_\aa = \kk$ over the yellow
parameters~$\aa$ and~$0$ elsewhere.  The blue upset specifies the
generators (births) at the lower boundary of~$\cM$; unchecked, these
persist all the way up and to the right.  But the red downset
specifies the cogenerators (deaths) along the upper boundary of~$\cM$.
This example illustrates how fringe presentations are topologically
interpretable in terms of birth and death of homology classes, when
the modules in question are persistent homology.
\end{example}

When birth upsets and death downsets are semialgebraic, or piecewise
linear, or otherwise manageable algorithmically, monomial matrices
render fringe presentations effective data structures for
multiparameter persistent homology (\emph{multipersistence}).

It is evidence for the naturality of the definitions that the
algebraic condition of admitting a finite fringe presentation is
equivalent to the topological and combinatorial notions of tameness;
this equivalence is part of the syzygy theorem
(Theorem~\ref{t:syzygy}).

Although the data structure of fringe presentation is aimed at
$\RR^n$-modules, it is new and lends insight already for finitely
generated $\NN^n$-modules (even when $n = 2$), where monomial matrices
have their origins \cite[Section~3]{alexdual}.  The context there is
more or less that of finitely determined modules; see
Definition~\ref{d:monomial-matrix-fl} in particular, which is really
just the special case of fringe presentation in which the upsets are
localizations of~$\NN^n$ and the downsets are duals---that is,
negatives---of those.

It may be helpful to understand the relaxation from free presentation
to fringe presentation step by step over $\ZZ^n$ or~$\RR^n$.  First,
tame modules can have generators that sit infinitely far back along
various axes, as in Example~\ref{e:toy-model-fly-wing}.  This issue is
solved by allowing flat modules instead of free ones.  Over~$\ZZ^n$,
for instance, this means that (multigraded translates of)
localizations of the polynomial ring by inverting variables should be
used instead of only (translates of) the ring itself; see
Remark~\ref{r:flat}.

The next relaxation concerns cogenerators (deaths) as opposed to
generators (births).  Switching these means injective hulls and
copresentations instead of flat covers and presentations.  Commutative
algebra has considered multigraded injectives for decades
\cite{goto-watanabe1978} (see \cite[Chapter~11]{cca} for an
exposition), even algorithmically \cite{irredRes,injAlg}.

The goal, however, is to place flat presentations and injective
copresentations on equal footing, so as to incorporate births and
deaths simultaneously.  These Matlis dual concepts (see
Remark~\ref{r:flat}) are combined by composing a flat cover $F \onto
\cM$ with an injective hull $\cM \into E$ to get a homomorphism $F \to
E$ whose image is~$\cM$.  This homomorphism is a \emph{flange
presentation} of~$\cM$ (Definition~\ref{d:flange}), which splices a
flat resolution to an injective one in the same way that Tate
resolutions (see \cite{coanda2003}, for example) transition from a
free resolution to an injective one over a Gorenstein local ring of
dimension~$0$.  Flange presentation is the most direct generalization
to multiple parameters of the presentation corresponding to a bar code
or persistence diagram.  The key realization is that with multiple
parameters, while births still correspond to generators, deaths
correspond to cogenerators rather than to relations
among~\mbox{generators}.

The final relaxation, from summands that are flat or injective to
arbitrary upset or downset modules, provides finite data structures
for tame modules even when they have infinte numbers of generators or
cogenerators.

\begin{example}\label{e:fringe'}
The module~$\cM$ in Example~\ref{e:fringe} is tame but has uncountably
many generators, uncountably many cogenerators, and an even worse set
of relations.  The fringe presentation in Example~\ref{e:fringe}
gathers the lower boundary points into a single {\color{blue}upset
module} and all upper boundary points into a single
{\color{red}downset module} (Definition~\ref{d:indicator}).  In
contrast, a free $\RR^n$-module of rank~$1$ has its nonzero components
on a principal upset, which has exactly one lower corner.  Thus fringe
presentation sacrifices flatness and injectivity for finiteness and
flexibility to serve over arbitrary posets.
\end{example}

\begin{remark}\label{r:approx}
Any $\RR^n$-module~$\cM$ can be approximated by a~$\ZZ^n$-module, the
result of restricting $\cM$ to, say, the rescaled
lattice~$\epsilon\ZZ^n$.  Suppose, for the sake of argument,
that~$\cM$ is bounded, in the sense of being zero at parameters
outside of a bounded subset of~$\RR^n$; think of
Example~\ref{e:toy-model-fly-wing}, ignoring those parts of the module
there that lie outside of the depicted square.
\begin{figure}[h]
\vspace{-2ex}
$$%
\begin{array}[b]{@{\hspace{-10pt}}r@{\hspace{-10pt}}|@{}l@{}}
\begin{array}{@{}c@{}}
\psfrag{r}{}
\psfrag{s}{}
\includegraphics[height=30mm]{toy-model}\\[-7.7pt]
\end{array}
&\,\,\,\\[-6pt]\hline
\end{array}
\quad\
\begin{array}[b]{@{\ \ }c@{}}
\hspace{-3pt}\goesto\\[7mm]\mbox{}
\end{array}
\qquad
\begin{array}[b]{@{\hspace{-10pt}}r@{\hspace{-10pt}}|@{}l@{}}
\begin{array}{@{}c@{}}
\includegraphics[height=30mm]{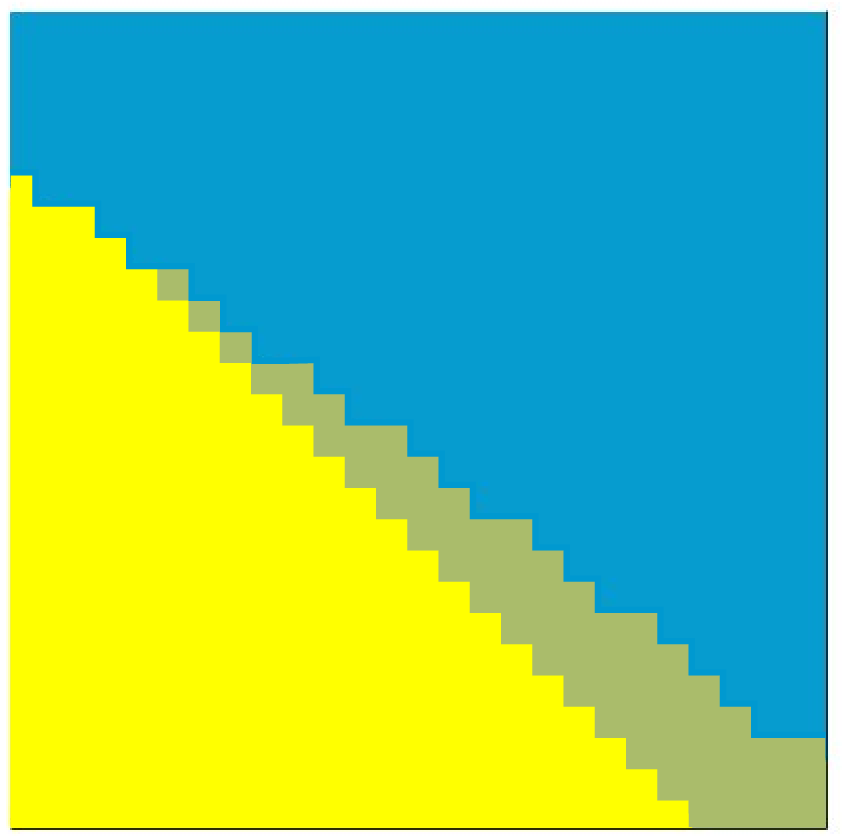}\\[-7.7pt]
\end{array}
&\,\,\,\\[-6pt]\hline
\end{array}
$$\vspace{-4ex}
\end{figure}
Ever better approximations, by smaller~$\epsilon \to 0$, yield sets of
lattice points ever more closely hugging an algebraic curve.
Neglecting the difficulty of computing where those lattice points lie,
how is a computer to store or manipulate such a set?  Listing the
points individually is an option, and perhaps efficient for
particularly coarse approximations, but in~$n$ parameters the
dimension of this storage problem is~$n-1$.  As the approximations
improve, the most efficient way to record such sets of points is
surely to describe them as the allowable ones on one side of the given
algebraic curve.  And once the computer has the curve in memory, no
approximation is required: just use the (points on the) curve itself.
In this way, even in cases of multipersistence where the entire
topological filtration setup can be approximated by finite simplicial
complexes, understanding the continuous nature of the un-approximated
setup can be at the same time more transparent and more~efficient.
\end{remark}

\begin{remark}\label{r:history}
$\ZZ^n$-graded commutative algebra is decades old
\cite{goto-watanabe1978}, but the perspective arising from their
equivalence with multipersistence is relatively new
\cite{multiparamPH}.  Initial steps have included descriptions of the
set of isomorphism classes \cite{multiparamPH}, presentations
\cite{csv17} and algorithms for computing \cite{computMultiPH,csv12}
or visualizing \cite{lesnick-wright2015} them, as well as interactions
with homological algebra of modules, such as persistence invariants
\cite{knudson2008} and certain notions of multiparameter noise
\cite{scolamiero-chacholski-lundman-ramanujam-oberg16}.  That
multipersistence modules can fail to be finitely generated
(cf.~Example~\ref{e:toy-model-fly-wing}) in situations reflecting
reasonably realistic data analysis was observed by Patriarca,
Scolamiero, and Vaccarino
\cite[Section~2]{patriarca-scolamiero-vaccarino2012}.  Their algorithm
for discrete parameters keeps track of generators not individually but
gathered together as generators of monomial ideals.  Huge numbers of
predictable syzygies among generators are swallowed and hence are
present only implicitly.  And that is good, as nothing topologically
new about persistence of homology classes is taught by the well known
syzygies of monomial ideals (see \cite{sylvan-resolution}, for
example), which in this setting are merely an interference pattern
from the merging of separate birth points of the same class.
\end{remark}

%%%%%%%%%%%%%%%%%%%%%%%%%%%%%%%%%%%%%%%%%%%%%%%%%%%%%%%%%%%%%%%%%%%%%%%%%
\subsection{Homological tameness: the syzygy theorem}\label{sub:homalg}%%

Just as upsets and downsets can be used to present poset modules, they
can be used to resolve them.  As in polynomial algebra, this line of
thinking culminates in a syzygy theorem (Theorem~\ref{t:syzygy} for
modules; Theorem~\ref{t:syzygy-complexes} for complexes) to the effect
that, remarkably, the topological, algebraic, combinatorial, and
homological notions of tameness available respectively~via
\begin{itemize}
\item%
constant subdivision (Definition~\ref{d:tame}),
\item%
fringe presentation (Definition~\ref{d:fringe}),
\item%
poset encoding (Definition~\ref{d:encoding}), and
\item%
indicator resolution (Definition~\ref{d:resolutions})
\end{itemize}
are equivalent.  The moral is that the tame condition over arbitrary
posets appears to be the right notion to stand in lieu of the
noetherian hypothesis over~$\ZZ^n$: the tame condition is robust, has
multiple characterizations from different mathematical perspectives,
and enables algorithmic computation in principle.  The syzygy theorem
is the main take-away from the paper.  It engulfs the statements of
its stepping stones, most notably Theorems~\ref{t:constant-uptight}
and~\ref{t:tame}, whose proofs isolate crucial ideas.

The syzygy theorem directly reflects the more usual syzygy theorem for
finitely determined $\ZZ^n$-modules
(Theorem~\ref{t:finitely-determined}), with upset and downset
resolutions being the arbitrary-poset analogues of free and injective
resolutions, respectively, and fringe presentation being the
arbitrary-poset analogue of flange presentation.  Indeed, the proof of
the syzygy theorem works by reducing to the finitely determined case
(Section~\ref{s:ZZn}) over~$\ZZ^n$.  The main point is that given a
finite encoding of a module over an arbitrary poset~$\cQ$, the
encoding poset can be embedded in~$\ZZ^n$.  The proof is completed by
pushing the data forward to~$\ZZ^n$, applying the more usual syzygy
theorem to finitely determined modules there, and pulling back
to~$\cQ$.

It bears mentioning that even if one is interested in ring-theoretic
situations where the poset is $\ZZ^n$ or~$\RR^n$, one can and should
do homological algebra of tame modules over a finite encoding poset
rather than (only) over the original parameter space.

\begin{remark}\label{r:weaker}
Topological tameness via constant subdivision is a~priori weaker (that
is, more inclusive) than combinatorial tameness via finite encoding,
and algebraic tameness via fringe presentation is a~priori weaker than
homological tameness via upset or downset resolution.  Thus the syzygy
theorem leverages relatively weak topological structure into powerful
homological structure.  In particular, it provides concrete,
computable, combinatorially describable representatives for objects in
the derived category.  The proof \cite{strat-conical} of two
conjectures due to Kashiwara and Schapira
(\cite[Conjecture~3.17]{kashiwara-schapira2017}
and~\cite[Conjecture~3.20]{kashiwara-schapira2019}) relies on the fact
that, although the tameness characterizations require no additional
structure on the underlying poset, any additional structure that is
present---subanalytic, semialgebraic, or piecewise-linear---is
preserved by the transitions among tameness characterizations in the
syzygy~\mbox{theorem}.
\end{remark}

%%%%%%%%%%%%%%%%%%%%%%%%%%%%%%%%%%%%%%%%%%%%%%%%%%%%%%%%%%%%%%%%%%%%%%%%%
\subsection{Bar codes and further developments}\label{sub:bar-codes}%%%%%

Tame modules over the totally ordered set of integers or real numbers
are, up to isomorphism, the same as ``bar codes'': finite multisets of
intervals.  The most general form of this bijection between algebraic
objects and essentially combinatorial objects over totally ordered
sets is due to Crawley-Boevey \cite{crawley-boevey2015}.  At its root
% heh! pun for those who like Lie groups
this bijection is a manifestation of the tame representation theory of
the type~$A$ quiver; that is the context in which bar codes were
invented by Abeasis and Del Fra, who called them ``diagrams of boxes''
\cite{abeasis-delFra1980,abeasis-delFra-kraft1981}.  Subsequent
terminology for objects equivalent to these diagrams of boxes include
bar codes themselves (see \cite{ghrist2008}) and planar depictions
discovered effectively simultaneously in topological data analysis,
where they are called persistence diagrams
\cite{edelsbrunner-letscher-zomorodian2002} (see
\cite{cohenSteiner-edelsbrunner-harer2007} for attribution) and
combinatorial algebraic geometry, where they are
called~lace~\mbox{arrays}~\cite{quivers}.

No combinatorial analogue of the bar code can classify modules over an
arbitrary poset because there are too many indecomposable modules,
even over seemingly well behaved posets like~$\ZZ^n$
\cite{multiparamPH}: the indecomposables come in families of positive
dimension.  Over arbitrary posets, every tame module does still admit
a decomposition of the Krull--Schmidt sort, namely as a direct sum of
indecomposables \cite{botnan-crawley-boevey2018}, but again, there are
too many indecomposables for this to be useful in general.  Instead of
decomposing modules as direct sums of elemental pieces, which be
arbitrarily complicated \cite{buchet-escolar2020}, the commutative
algebra view advocates expressing poset modules in terms of intervals,
especially indicator modules for upsets and downsets, by way of less
rigid constructions like fringe presentation (Section~\ref{s:fringe}),
primary decomposition \cite{prim-decomp-pogroup, primary-distance}, or
resolution (Section~\ref{s:syzygy}).  This relaxes the direct sum in a
$K$-theoretic way, allowing arbitrary complexes instead of split short
exact sequences.

Various aspects of bar codes are reflected in the equivalent concepts
of tameness.  The finitely many regions of constancy are seen in
topological tameness by constant subdivision.  The matching between
left and right endpoints is seen in algebraic tameness by fringe
presentation, where the left endpoints form lower borders of birth
upsets and the right endpoints form upper borders of death downsets.
The expressions of modules in terms of bars is seen, in its relaxed
form, in homological tameness, where modules become ``virtual'' sums,
in the sense of being formal alternating combinations rather than
direct sums of intervals.  Primary decomposition
\cite{prim-decomp-pogroup} isolates elements that would, in a bar
code, lie in bars unbounded in fixed sets of directions (see
also~\cite{harrington-otter-schenck-tillmann2019}).

Bar codes rely on some concept of minimality: left endpoints must
correspond to \emph{minimal} generators, and right endpoints to
\emph{minimal} cogenerators.  These are not available over arbitrary
posets and are subtle to define and handle properly even for partially
ordered real vector spaces \cite{essential-real}.  When minimality is
available, instead of a bijection (perfect matching) from a multiset
of births to a multiset of deaths, the best one can settle for is a
linear map from a filtration of the birth multiset to a filtration of
the death multiset \cite{functorial-multiPH}.
% The linear map in the case of a perfect matching from left endpoints
% to right endpoints is represented by an identity matrix, assuming
% that the left and right endpoints are ordered consistently.

%%%%%%%%%%%%%%%%%%%%%%%%%%%%%%%%%%%%%%%%%%%%%%%%%%%%%%%%%%%%%%%%%%%%%%%%%
\section{Tame poset modules}\label{s:tame}%%%%%%%%%%%%%%%%%%%%%%%%%%%%%%%
%%%%%%%%%%%%%%%%%%%%%%%%%%%%%%%%%%%%%%%%%%%%%%%%%%%%%%%%%%%%%%%%%%%%%%%%%

%%%%%%%%%%%%%%%%%%%%%%%%%%%%%%%%%%%%%%%%%%%%%%%%%%%%%%%%%%%%%%%%%%%%%%%%%
\subsection{Modules over posets}\label{sub:persistence}%%%%%%%%%%%%%%%%%%

\begin{defn}\label{d:poset-module}
Let $\cQ$ be a partially ordered set (\emph{poset}) and~$\preceq$ its
partial order.  A \emph{module over~$\cQ$} (or a \emph{$\cQ$-module})
is
\begin{itemize}
\item%
a $Q$-graded vector space $\cM = \bigoplus_{q\in Q} \cM_q$ with
\item%
a homomorphism $\cM_q \to \cM_{q'}$ whenever $q \preceq q'$ in~$Q$
such that
\item%
$\cM_q \to \cM_{q''}$ equals the composite $\cM_q \to \cM_{q'} \to
\cM_{q''}$ whenever $q \preceq q' \preceq q''$.
\end{itemize}
A \emph{homomorphism} $\cM \to \cN$ of $\cQ$-modules is a
degree-preserving linear map, or equivalently a collection of vector
space homomorphisms $\cM_q \to \cN_q$, that commute with the structure
homomorphisms $\cM_q \to \cM_{q'}$ and $\cN_q \to \cN_{q'}$.
\end{defn}

The last bulleted item is \emph{commutativity}.  In important
instances (e.g.,~\mbox{Example}~\ref{e:multifiltration}), it reflects
that inclusions of topological subspaces induce functorial maps on
homology.

\begin{example}\label{e:RR^n-graded}
A module over the poset $\RR^n$ whose partial order is componentwise
comparison is the same thing as an $\RR^n$-graded module over the
monoid algebra $\kk[\RR^n_+]$, where $\RR_+ = \{r \in \RR \mid r \geq
0\}$ is the additive monoid of nonnegative real numbers.  (This is
immediate from the definitions, see
\cite[\S2.1]{lesnick-interleav2015}, for instance.)  This case
generalizes that of $\ZZ^n$-modules, which are $\ZZ^n$-graded modules
over polynomial rings~$\kk[\NN^n]$:
elements of $\kk[\RR^n_+]$ are polynomials with real exponents.
\end{example}

\begin{example}\label{e:multifiltration}
Let $X$ be a topological space and $\cQ$ a poset.
\begin{enumerate}
\item\label{i:filtration}%
A \emph{filtration of~$X$ indexed by~$\cQ$} is a choice of subspace
$X_q \subseteq X$ for each $q \in \cQ$ such that $X_q \subseteq
X_{q'}$ whenever $q \preceq q'$.

\item\label{i:PH}%
The \emph{$i^\mathrm{th}$ persistent homology} of the \emph{filtered
space}~$X$ is the associated homology module, meaning the $\cQ$-module
$\bigoplus_{q \in \cQ} H_i X_q$.
\end{enumerate}
\end{example}

\begin{remark}\label{r:curry}
There are a number of abstract, equivalent ways to phrase
Example~\ref{e:multifiltration}.  For example, a filtration is a
functor from $\cQ$ to the category $\mathcal{S}$ of subspaces of~$X$
or an $\mathcal{S}$-valued sheaf on~$\cQ$ with its \emph{Alexandrov
topology}, whose base is the set of principal upsets (dual
order~ideals with unique minimal element).  For background on and
applications of many of these perspectives, see Curry's dissertation
\cite{curry-thesis}, particularly \S4.2 there.  See also
\cite{strat-conical}, which details the transition from modules to
constructible~sheaves.
\end{remark}

\begin{example}\label{e:RR+}
A \emph{real multifiltration} of~$X$ is a filtration indexed
by~$\RR^n$, with its partial order by coordinatewise comparison.
Example~\ref{e:fly-wing-filtration} is a real multifiltration of $X =
\RR^2$ with $n = 2$.  The persistent homology of a real $n$-filtered
space~$X$ is a \emph{multipersistence module}, which is an
$\RR^n$-module.
\end{example}

%%%%%%%%%%%%%%%%%%%%%%%%%%%%%%%%%%%%%%%%%%%%%%%%%%%%%%%%%%%%%%%%%%%%%%%%%
\subsection{Constant subdivisions}\label{sub:constant}%%%%%%%%%%%%%%%%%%%

\begin{defn}\label{d:constant-subdivision}
Fix a $\cQ$-module~$\cM$.  A \emph{constant subdivision} of~$\cQ$
\emph{subordinate} to~$\cM$ is a partition of~$\cQ$ into
\emph{constant regions} such that for each constant region~$I$ there
is a single vector space~$M_I$ with an isomorphism $M_I \to M_\ii$ for
all $\ii \in I$ that \emph{has no monodromy}: if $J$ is some (perhaps
different) constant region, then all comparable pairs $\ii \preceq
\jj$ with $\ii \in I$ and $\jj \in J$ induce the same composite
homomorphism $M_I \to M_\ii \to M_\jj \to M_J$.
\end{defn}

\begin{example}\label{e:puuska-nonconstant-isotypic}
% Example~2 in puuska-ville-examples171024.pdf
Consider the poset module (kindly provided by Ville Puuska
\cite{puuska18})
$$%
\psfrag{1}{\tiny$1$}
\psfrag{2}{\tiny$2$}
\psfrag{kk}{\tiny$\kk$}
\begin{array}{@{}c@{}}\includegraphics[height=30mm]{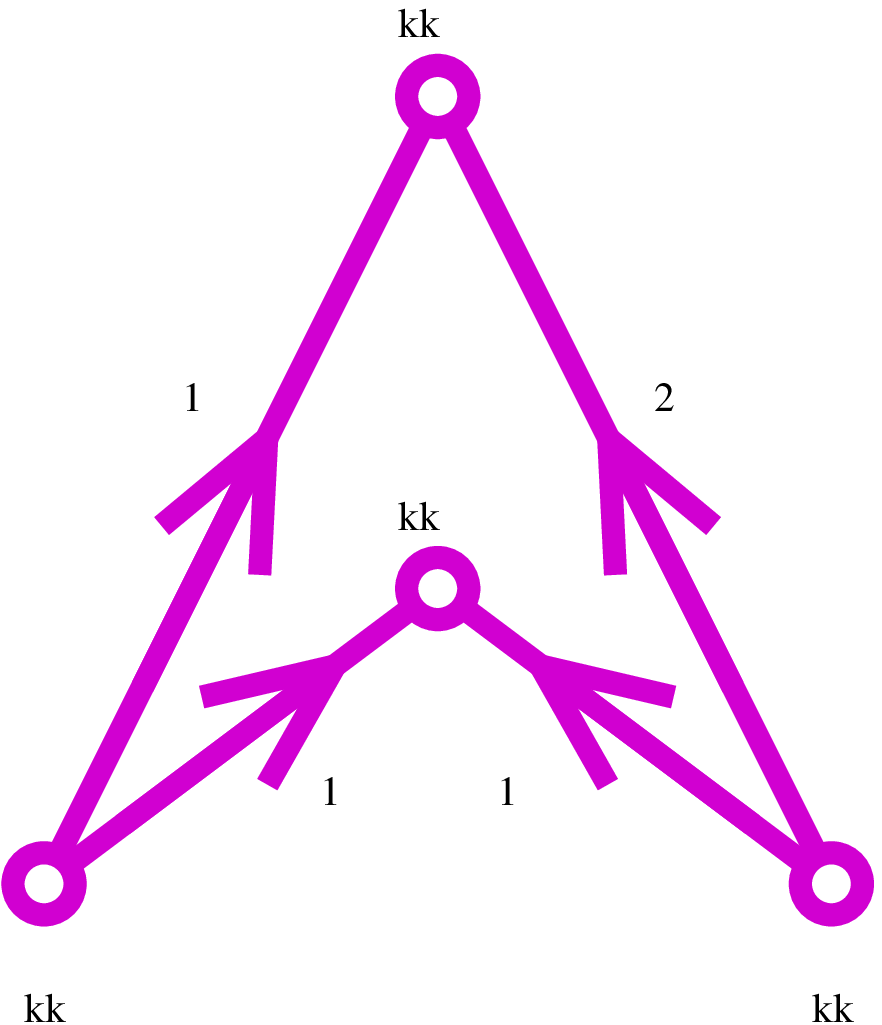}\end{array}
$$
in which the structure morphisms $M_\aa \to M_\bb$ are all identity
maps on~$\kk$, except for the rightmost one.  This example
demonstrates that module structures need not be recoverable from their
\emph{isotypic subdivision}, in which elements of~$\cQ$ lie in the
same region when their vector spaces are isomorphic via a poset
relation.  In cases like this, refining the isotypic subdivision
appropriately yields a constant subdivision.  Here, the two minimum
elements must lie in different constant regions
% or else
% 1. if c lies in the region I with a and b, then the map from M_I = k
%    to a equals the map from M_I = k to b, as can be seen by going
%    from a to c to b, so the maps M_I --> d are off by a factor of 2;
% 2. if c doesn't lie in the region I with a and b, then it lies in
%    another region J, and the map from k = M_I to M_J = k via a and c
%    equals the map from k = M_I to M_J = k via b and c, so again the
%    maps from k = M_I to the copies of k at a and b are equal, so the
%    maps M_I --> d are off by a factor of 2.
and the two maximum elements must lie in different constant regions.
% or else
% 1. if a lies in the region I with c and d, then the map from M_I = k
%    to c equals the map from M_I = k to d, as can be seen by going
%    from c to a to d, so the maps M_b --> M_I are off by a factor of 2;
% 2. if a doesn't lie in the region I with a and b, then it lies in
%    another region J, and the map from k = M_J to M_I = k via a and c
%    equals the map from k = M_J to M_I = k via a and d, so again the
%    maps from k = M_I to the copies of k at c and d are equal, so the
%    maps d --> M_I are off by a factor of 2, which is a problem
%    regardless of whether d lies in I or not.
Any partition accomplishing these separations---that is, any
refinement of a partition that has a region consisting of precisely
one maximum and one minimum---is a constant subdivision.  Of course, a
finite poset always admits a constant subdivision with finitely many
regions, since~the~\mbox{partition}~into~\mbox{singletons}~works.
\end{example}

\begin{example}\label{e:antidiagonal}
Constant subdivisions need not refine the isotypic subdivision in
Example~\ref{e:puuska-nonconstant-isotypic}, one reason being that a
single constant region can contain two or more incomparable isotypic
regions.  For a concrete instance with a single constant region
comprised of uncountably many incomparable isotypic regions, let $\cM$
be the $\RR^2$-module that has $\cM_\aa = 0$ for all $\aa \in \RR^2$
except for those on the antidiagonal line spanned by $\left[\twoline
1{-1}\right] \in \RR^2$, where $\cM_\aa = \kk$.  There is only one
such $\RR^2$-module because all of the degrees of nonzero graded
pieces of~$\cM$ are incomparable, so all of the structure
homomorphisms $\cM_\aa \to \cM_\bb$ with $\aa \neq \bb$ are zero.
Every point on the line is a singleton isotypic region.  This
conclusion reverses entirely when the line is thickened to a strip of
positive width, where the single isotypic region comprising the
support yields a constant subdivision.
\end{example}

The direction of the line in Example~\ref{e:antidiagonal} is
important: an antidiagonal line, whose points form an antichain
in~$\RR^2$, behaves radically differently than diagonal lines.

\begin{example}\label{e:diagonal}
Let $\cM$ be an $\RR^2$-module with $\cM_\aa = \kk$ whenever $\aa$
lies in the closed diagonal strip between the lines of slope~$1$
passing through any pair of points.  The structure homomorphisms
$\cM_\aa \to \cM_\bb$ could all be zero, for instance, or some of them
could be nonzero.  But the length $|\aa - \bb|$ of any nonzero such
homomorphism must in any case be bounded above by the Manhattan (i.e.,
$\ell^\infty$) distance between the two points, since every longer
structure homomorphism factors through a sequence that exits and
re-enters the strip.
$$%
\psfrag{0}{\tiny$0$}
\begin{array}{@{}c@{}}\includegraphics[height=45mm]{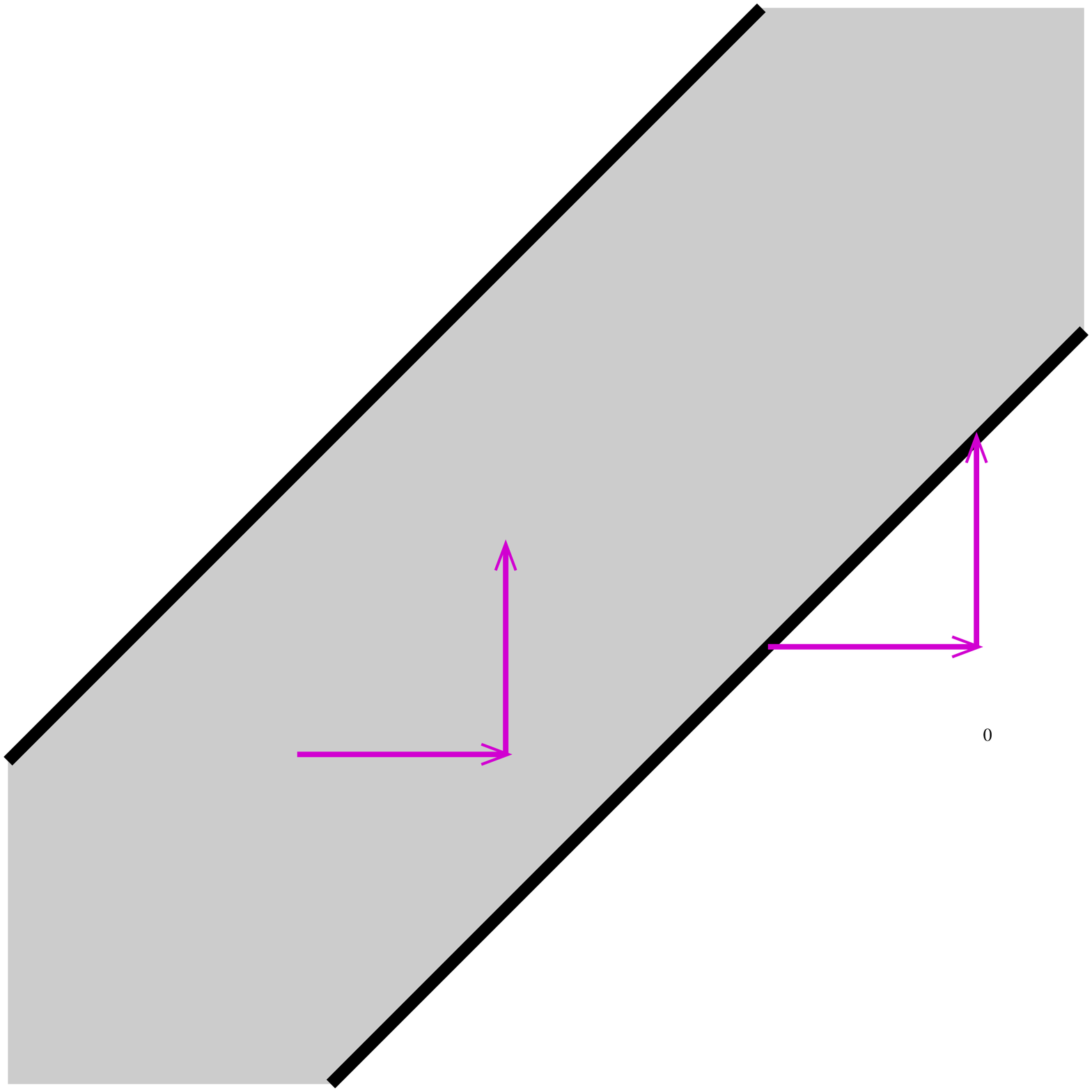}\end{array}
$$
In particular, the structure homomorphism between any pair of points
on the upper boundary line of the strip is zero because it factors
through a homomorphism that points upward first; therefore such pairs
of points lie in distinct regions of any constant subdivision.  The
same conclusion holds for pairs of points on the lower boundary line
of the strip.  When the strip has width~$0$, so the upper and lower
boundary coincide, the module is supported along a diagonal line whose
uncountably many points must all lie in distinct constant regions.
\end{example}

The reference to ``no monodromy'' in
Definition~\ref{d:constant-subdivision} agrees with the usual notion.

\begin{lemma}\label{l:no-monodromy}
Fix a constant region~$I$ subordinate to a poset module~$\cM$.  The
composite isomorphism $M_I \to M_\ii \to \dots \to M_{\ii'} \to M_I$
is independent of the path from~$\ii$ to~$\ii'$ through~$I$, if one
exists.  In particular, when $\ii = \ii'$ the composite is the
identity on~$M_I$.
\end{lemma}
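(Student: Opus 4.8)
The plan is to peel apart Definition~\ref{d:constant-subdivision} so that the no-monodromy hypothesis, applied with the target region taken to be~$I$ itself, pins down every structure homomorphism of~$\cM$ between two elements of~$I$; once that is done, path-independence is a telescoping identity. Write $\theta_\ii\colon M_I \to M_\ii$ for the given isomorphism attached to each $\ii \in I$, and, when $\ii \preceq \jj$, write $s_{\ii\jj}\colon M_\ii \to M_\jj$ for the structure homomorphism of~$\cM$.

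First I would invoke the no-monodromy condition in the case $J = I$: it says that $\theta_\jj^{-1} \circ s_{\ii\jj} \circ \theta_\ii$ is one and the same endomorphism~$c$ of~$M_I$ for every comparable pair $\ii \preceq \jj$ with both members in~$I$. Evaluating on the trivial relation $\ii \preceq \ii$, where $s_{\ii\ii}$ is the identity (a $\cQ$-module is in particular a functor on~$\cQ$), forces $c = \id_{M_I}$, and hence $s_{\ii\jj} = \theta_\jj \circ \theta_\ii^{-1}$ for \emph{every} comparable pair in~$I$. In particular each such $s_{\ii\jj}$ is an isomorphism, which is exactly what makes ``path from~$\ii$ to~$\ii'$ through~$I$'' meaningful: along a zigzag $\ii = \ii_0, \dots, \ii_m = \ii'$ in~$I$ with consecutive terms comparable, a backward step is read as the inverse of a structure homomorphism, and the composite in the statement is the resulting isomorphism $M_I \to M_\ii \to \dots \to M_{\ii'} \to M_I$.

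Next I would note that, forward or backward, the isomorphism $M_{\ii_{k-1}} \to M_{\ii_k}$ contributed by the $k$-th step equals $\theta_{\ii_k} \circ \theta_{\ii_{k-1}}^{-1}$: directly in the forward case, and as $\bigl(\theta_{\ii_{k-1}} \circ \theta_{\ii_k}^{-1}\bigr)^{-1}$ in the backward case. Multiplying the steps, the interior factors cancel in adjacent pairs and the path composite collapses to $\theta_{\ii'} \circ \theta_\ii^{-1}$, which depends only on the endpoints; flanking this by $\theta_\ii$ and $\theta_{\ii'}^{-1}$ as in the statement leaves $\id_{M_I}$, visibly the identity when $\ii = \ii'$.

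I do not expect a real obstacle here. The whole content is the observation that the $J = I$ instance of no-monodromy, together with~$\cM$ carrying trivial relations to identity maps, forces the constant~$c$ to be~$\id_{M_I}$ and thereby makes the structure homomorphisms inside a constant region invertible. The only subtlety is that ``path composite'' is not defined for zigzags with downward steps until that invertibility is known; once it is, what remains is the telescoping bookkeeping above.
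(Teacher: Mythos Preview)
Your argument is correct and is essentially the same as the paper's, only made fully explicit: the paper dispatches the lemma in two lines by induction on path length, leaving the reader to supply exactly the observation you isolate---namely that the $J=I$ instance of the no-monodromy hypothesis, evaluated on the trivial relation $\ii \preceq \ii$, forces the common composite to be~$\id_{M_I}$ and hence each internal structure map to equal $\theta_\jj \circ \theta_\ii^{-1}$. Your telescoping presentation and the paper's induction are two packagings of the same cancellation.
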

\begin{proof}
The second claim follows from the first.  When the path has
length~$0$, the claim is that $M_I \to M_\ii \to M_I$ is the identity
on~$M_I$, which follows by definition.  For longer paths the result is
proved by induction on path length.
\end{proof}

Constant subdivision is the subtle part of the central finiteness
concept of the paper.

\begin{defn}\label{d:tame}
Fix a poset~$\cQ$ and a $\cQ$-module~$M$.
\mbox{}
\begin{enumerate}
\item\label{i:finite-constant-subdiv}%
A constant subdivision of~$\cQ$ is \emph{finite} if it has finitely
many constant regions.

\item\label{i:Q-finite}%
The $\cQ$-module~$M$ is \emph{$\cQ$-finite} if its components $M_q$
have finite dimension over~$\kk$.

\item\label{i:tame}%
The $\cQ$-module~$M$ is \emph{tame} if it is $\cQ$-finite and $\cQ$
admits a finite constant subdivision subordinate to~$M$.
\end{enumerate}
\end{defn}

\begin{remark}\label{r:tame}
\mbox{}
\begin{enumerate}
\item\label{i:ordinary-tame}%
In ordinary totally ordered persistent homology, tameness means simply
that the bar code (see Section~\ref{sub:bar-codes}) has finitely many
\pagebreak
bars, or equivalently, the persistence diagram has finitely many
off-diagonal dots: finiteness of the set of constant regions precludes
infinitely many non-overlapping bars (the bar code can't be ``too
long''), while the vector space having finite dimension precludes a
parameter value over which lie infinitely many bars (the bar code
can't be ``too wide'').
\item%
The tameness condition here includes but is much less rigid than the
compact tameness condition in
\cite{scolamiero-chacholski-lundman-ramanujam-oberg16}, the latter
meaning more or less that the module is finitely generated over a
scalar multiple of~$\ZZ^n$ in~$\QQ^n$.

\item%
Some literature calls Definition~\ref{d:tame}.\ref{i:Q-finite}
\emph{pointwise finite dimensional (PFD)}.  The terminology here
agrees with that in \cite{alexdual}, on which Section~\ref{s:ZZn} here
is based.
\end{enumerate}
\end{remark}

\begin{excise}{%
  \begin{remark}\label{r:isotypic-expected}
  Data analysis should always produce tame persistence modules.  Indeed,
  from limited experience, in data analysis the isotypic subdivision
  (Example~\ref{e:puuska-nonconstant-isotypic}) appears consistently to
  be a finite constant subdivision.  It is unclear what kind of data
  situation might produce a nonconstant isotypic subdivision, but it
  seems likely that in such cases a constant subdivision can always be
  obtained by subdividing---into contractible pieces---isotypic
  components that have nontrivial topology and then gathering
  incomparable contractible pieces into single constant regions.
  Exploring these assertions is left open.
  \end{remark}
}\end{excise}%

\begin{lemma}\label{l:subdiv}
Any refinement of a constant subdivision subordinate to a
$\cQ$-module~$\cM$ is a constant subdivision subordinate to~$\cM$.
\end{lemma}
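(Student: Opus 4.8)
The idea is to transport the constant-region data from the given (coarse) subdivision to the refinement essentially verbatim, and to observe that passing to a refinement only \emph{shrinks} the collections of comparable cross-region pairs whose induced composites must agree, so the no-monodromy condition is inherited for free. Concretely, I would first fix the given constant subdivision, writing its constant regions as $\{I_k\}$ together with the accompanying data: for each~$k$ a vector space~$M_{I_k}$ and isomorphisms $M_{I_k} \to M_\ii$ for all $\ii \in I_k$, subject to no monodromy as in Definition~\ref{d:constant-subdivision}. Let $\{I'_\ell\}$ be a refinement; since it is in particular a partition of~$\cQ$ and each part of it is contained in some part of~$\{I_k\}$, each~$I'_\ell$ lies in a unique coarse region, which I name~$I_{k(\ell)}$. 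I then equip the refinement with the obvious data: set $M_{I'_\ell} := M_{I_{k(\ell)}}$, and take the isomorphism $M_{I'_\ell} \to M_\ii$ for $\ii \in I'_\ell$ to be the one already given for~$I_{k(\ell)}$, restricted along the inclusion $I'_\ell \subseteq I_{k(\ell)}$ (still an isomorphism for each such~$\ii$).

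The only thing left to check is the no-monodromy condition for~$\{I'_\ell\}$. Fix refinement regions~$I'_\ell$ and~$I'_m$ (possibly with $I'_\ell = I'_m$, or with $k(\ell) = k(m)$), and a comparable pair $\ii \preceq \jj$ with $\ii \in I'_\ell$ and $\jj \in I'_m$. By the choice of data, the composite $M_{I'_\ell} \to M_\ii \to M_\jj \to M_{I'_m}$ is literally equal to the composite $M_{I_{k(\ell)}} \to M_\ii \to M_\jj \to M_{I_{k(m)}}$. Since $\ii \in I'_\ell \subseteq I_{k(\ell)}$ and $\jj \in I'_m \subseteq I_{k(m)}$, this composite is one of those appearing in the no-monodromy condition for the original subdivision applied to the coarse regions $I_{k(\ell)}$ and $I_{k(m)}$. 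That condition asserts that all such composites coincide as~$\ii$ ranges over $I_{k(\ell)}$ and~$\jj$ over $I_{k(m)}$ with $\ii \preceq \jj$; a fortiori they coincide when~$\ii$ is further restricted to the smaller set~$I'_\ell$ and~$\jj$ to~$I'_m$. Hence the composite $M_{I'_\ell} \to M_{I'_m}$ is well defined independent of the representing pair, which is exactly what is required, and $\{I'_\ell\}$ is a constant subdivision of~$\cQ$ subordinate to~$\cM$.

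I do not expect a genuine obstacle here; the argument is bookkeeping once the data on the refinement is chosen correctly. The one point that merits a word of care—and the place a careless argument could go wrong—is that the no-monodromy clause in Definition~\ref{d:constant-subdivision} is stated with the two constant regions allowed to coincide. This is precisely what handles the worry that two distinct refinement regions $I'_\ell \neq I'_m$ sitting inside a single coarse region $I_{k(\ell)} = I_{k(m)}$ might introduce monodromy that was invisible at the coarse level: since the coarse no-monodromy condition already controls all comparable pairs internal to $I_{k(\ell)}$ (forcing the relevant endomorphism to be the identity, by Lemma~\ref{l:no-monodromy}), nothing new can appear after refining.
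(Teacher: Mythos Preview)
Your proposal is correct and follows exactly the same approach as the paper: choose for each region of the refinement the vector space $M_I$ of the coarse region containing it, whereupon the no-monodromy condition is inherited immediately from Definition~\ref{d:constant-subdivision}. The paper's proof is a single sentence to this effect, while you have spelled out the bookkeeping in more detail (including the care about two refinement regions inside one coarse region), but the content is identical.
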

\begin{proof}
Choosing the same vector space~$M_I$ for every region of the
refinement contained in the constant region~$I$, the lemma is
immediate from Definition~\ref{d:constant-subdivision}.
\end{proof}

%%%%%%%%%%%%%%%%%%%%%%%%%%%%%%%%%%%%%%%%%%%%%%%%%%%%%%%%%%%%%%%%%%%%%%%%%
\subsection{Auxiliary hypotheses}\label{sub:auxiliary}\mbox{}%%%%%%%%%%%%

\noindent
Effectively computing with real multifiltered spaces
(Example~\ref{e:RR+}) requires keeping track of the shapes of various
regions, such as constant regions.  (In later sections, other regions
along these lines include upsets, downsets, and fibers of poset
morphisms.)  The fact that applications of persistent homology often
arise from metric considerations, which are semialgebraic in nature,
or are approximated by piecewise linear structures suggests the
following auxiliary hypotheses for algorithmic developments.  The
subanalytic hypothesis is singled out for theoretical purposes
surrounding conjectures of Kashiwara and Schapira
(\cite[Conjecture~3.17]{kashiwara-schapira2017},
\cite[Conjecture~3.20]{kashiwara-schapira2019};~cf.~\cite{strat-conical}).

\begin{defn}\label{d:pogroup}
An abelian group~$Q$ is \emph{partially ordered} if it is generated
by a submonoid~$Q_+$, called the \emph{positive cone}, that has
trivial unit group.  The partial order is: $q \preceq q' \iff q' - q
\in Q_+$.  A partially ordered group is
\begin{enumerate}
\item\label{i:real}%
\emph{real} if the underlying abelian group is a real vector space of
finite dimension;
\item\label{i:discrete}%
\emph{discrete} if the underlying abelian group is free of finite
rank.
\end{enumerate}
\end{defn}

\begin{defn}\label{d:auxiliary-hypotheses}
Fix a subposet~$\cQ$ of a real partially ordered group.  A partition
of~$\cQ$ into subsets~is
\begin{enumerate}
\item\label{i:semialgebraic}%
\emph{semialgebraic} if the subsets are real semialgebraic varieties;

\item\label{i:PL}%
\emph{piecewise linear (PL)} if the subsets are finite unions of
convex polyhedra, where a \emph{convex polyhedron} is an intersection
of finitely many closed or open half-spaces;

\item\label{i:subanalytic}%
\emph{subanalytic} if the subsets are subanalytic varieties;

\item\label{i:classX}%
\emph{of class~$\mathfrak X$} if the subsets lie in a
family~$\mathfrak X$ of subsets of~$\cQ$ closed under complement,
finite intersection, negation, and Minkowski sum with the
positive~cone~$\cQ_+$.
\end{enumerate}
A module over~$\cQ$ is \emph{semialgebraic}, or \emph{PL},
\emph{subanalytic}, or \emph{of class~$\mathfrak X$} if $\cQ_+$ is and
the module is tamed by a subordinate finite constant subdivision of
the corresponding~type.
\end{defn}

\begin{remark}\label{r:auxiliary-hypotheses}
Subposets of real partially ordered groups are allowed in
Definition~\ref{d:auxiliary-hypotheses} to be able to speak of, for
example, piecewise linear sets in rational vector spaces, or
semialgebraic subsets of~$\ZZ^n$, such as the set of lattice points in
a right circular cone (e.g.~\cite[Example~5.9]{prim-decomp-pogroup}).
When $\cQ$ is properly contained in the ambient real vector space,
subsets of~$\cQ$ are semialgebraic, PL, or subanalytic when they are
intersections with~$\cQ$ of the corresponding type of subset of the
ambient real vector space.
\end{remark}

\begin{prop}\label{p:auxiliary-hypotheses}
Fix a partially ordered real vector space~$\cQ$.
\begin{enumerate}
\item\label{i:classes}%
The classes of semialgebraic, PL, and subanalytic subsets of~$\cQ$ are
each closed under complements, finite intersections, and negation.

\item\label{i:sum-semialg}%
The Minkowski sum $S + \cQ_+$ of a semialgebraic set~$S$ with the
positive cone is semialgebraic if~$\cQ_+$ is semialgebraic.

\item\label{i:sum-PL}%
The Minkowski sum $S + \cQ_+$ of a PL set with the positive cone is
semialgebraic if~$\cQ_+$ is polyhedral.

\item\label{i:sum-subanalytic}%
The Minkowski sum $S + \cQ_+$ of a bounded subanalytic set~$S$ with
the positive cone is subanalytic if~$\cQ_+$ is subanalytic.
\end{enumerate}
\end{prop}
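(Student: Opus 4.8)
The plan is to dispatch each item using the standard stability properties of the three named classes of sets, treating semialgebraic and subanalytic on the same footing (via the Tarski--Seidenberg theorem and Gabrielov's theorem on subanalytic sets, respectively) and handling the PL case by explicit polyhedral combinatorics. For part~\ref{i:classes}: closure under finite intersection and complement is part of the definition of each class (semialgebraic sets and subanalytic sets form Boolean algebras; a finite union of convex polyhedra is closed under complement because the complement of an open or closed half-space is again such, so distributing complements over the finite union reduces to finite intersections of half-spaces). Closure under negation is immediate since $\aa \mapsto -\aa$ is an invertible linear map, and all three classes are preserved by affine-linear (indeed polynomial, in the semialgebraic and subanalytic cases) automorphisms of the ambient real vector space.

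For parts~\ref{i:sum-semialg}--\ref{i:sum-subanalytic}, the common mechanism is that $S + \cQ_+ = \pi\bigl(\{(\aa,\cc) : \aa \in S,\ \cc \in \cQ_+\}\bigr)$, where $\pi\colon \cQ \times \cQ \to \cQ$ sends $(\aa,\cc) \mapsto \aa + \cc$ is linear, equivalently $S + \cQ_+ = \{\bb \in \cQ : \exists \aa\ (\aa \in S \ \wedge\ \bb - \aa \in \cQ_+)\}$. In the semialgebraic case (part~\ref{i:sum-semialg}), if $S$ and $\cQ_+$ are semialgebraic then the set in braces is semialgebraic, and Tarski--Seidenberg (semialgebraic sets are closed under linear projection, equivalently under existential quantification over real variables) gives that $S + \cQ_+$ is semialgebraic. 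For part~\ref{i:sum-PL}: when $\cQ_+$ is a single polyhedron (equivalently, a convex polyhedron $P$), one writes the PL set $S$ as a finite union $\bigcup_j C_j$ of convex polyhedra and uses $S + P = \bigcup_j (C_j + P)$; each $C_j + P$ is the image of the polyhedron $C_j \times P$ under the linear map $\pi$, hence is itself a convex polyhedron (the linear image of a polyhedron is a polyhedron, by Fourier--Motzkin elimination), so $S + P$ is a finite union of convex polyhedra, i.e.\ PL; one then notes PL sets are in particular semialgebraic, which is the stated conclusion. For part~\ref{i:sum-subanalytic}: here I would invoke the fact that the image of a \emph{relatively compact} subanalytic set under a real-analytic (in particular linear) map is subanalytic; writing $S + \cQ_+ = \bigcup_{k} \pi\bigl((S) \times (\cQ_+ \cap k\oB)\bigr)$ as a countable union over balls of radius $k$ does not immediately help, so instead I would use that $S$ bounded and $\cQ_+$ subanalytic imply that $S + \cQ_+$ is subanalytic because it equals $\{\bb : \exists \aa \in S,\ \bb - \aa \in \cQ_+\}$ and, $S$ being bounded, the quantifier over $\aa$ ranges over a relatively compact set; this is precisely the setting in which projections preserve subanalyticity (Bierstone--Milman, Gabrielov).

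The main obstacle is part~\ref{i:sum-subanalytic}, specifically the boundedness hypothesis on $S$: subanalytic sets are \emph{not} in general closed under projections with unbounded fibers (this is why the class is subanalytic rather than semianalytic), so the argument must be arranged so that the existential quantifier effectively ranges over a compact set. Having $S$ bounded accomplishes exactly this, and it is the reason the hypothesis is included; the remaining content is to cite the appropriate projection theorem for subanalytic sets and to confirm that $\cQ_+$ subanalytic together with $S$ bounded makes the graph $\{(\aa,\bb): \aa \in S,\ \bb - \aa \in \cQ_+\}$ a subanalytic set whose projection to the $\bb$-coordinate has relatively compact $\aa$-fibers. The other three parts are routine once the projection-formula viewpoint is in place.
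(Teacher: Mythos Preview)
Your proposal is correct and follows essentially the same approach as the paper: both reduce the Minkowski sum claims to the image of $S \times \cQ_+$ under the linear addition map $\cQ \times \cQ \to \cQ$, invoking Tarski--Seidenberg in the semialgebraic case, the polyhedral-image-under-linear-map fact in the PL case, and properness (equivalently, boundedness of the existentially quantified variable) in the subanalytic case. The only cosmetic difference is in item~\ref{i:classes} for PL complements, where the paper argues via the face decomposition induced by a finite hyperplane arrangement rather than your direct De~Morgan reduction; both arguments are valid and equally short.
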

\begin{proof}
See \cite{shiota97} (for example) to treat the semialgebraic and
subanalytic cases of item~\ref{i:classes}.  The PL case reduces easily
to checking that the complement of a single polyhedron is PL, which in
turn follows because a real vector space is the union of the
(relatively open) faces in any finite hyperplane arrangement, so
removing a single one of these faces leaves a PL set remaining.

For item~\ref{i:sum-semialg}, use that the image of a semialgebraic
set under linear projection is a semialgebraic set, and then express
$S + \cQ_+$ as the image of $S \times \cQ_+$ under the projection $\cQ
\times \cQ \to \cQ$ that acts by $(\qq,\qq') \mapsto \qq + \qq'$.  The
same argument works for item~\ref{i:sum-PL}.  The same argument also
works for item~\ref{i:sum-subanalytic} but requires that the
restriction of the projection to the closure of $S \times \cQ_+$ be a
proper map, which always occurs when $S$ is bounded.
\end{proof}

%%%%%%%%%%%%%%%%%%%%%%%%%%%%%%%%%%%%%%%%%%%%%%%%%%%%%%%%%%%%%%%%%%%%%%%%%
\section{Fringe presentation by upsets and downsets}\label{s:fringe}%%%%%
%%%%%%%%%%%%%%%%%%%%%%%%%%%%%%%%%%%%%%%%%%%%%%%%%%%%%%%%%%%%%%%%%%%%%%%%%

To define the concept of fringe presentation precisely requires some
elementary background on posets.  That includes upsets and downsets
and the modules constructed from them (Definition~\ref{d:indicator}).
Less obviously, notions of connectedness
(Definition~\ref{d:connected-poset}) play a key role, especially in
computing vector spaces of homomorphisms between upset and downset
modules (Proposition~\ref{p:U->D}).  Situations where these Hom sets
have dimension~$1$ (Corollary~\ref{c:U->D}) are particularly key,
leading to the notion of connected homomorphisms of interval modules
(Definition~\ref{d:connected-homomorphism}).  In general, the basic
poset material in Section~\ref{sub:indicator} should be useful as a
reference more widely than for the application to fringe presentation
here.  Section~\ref{sub:fringe} goes on to introduce fringe
presentation (Definition~\ref{d:fringe}) and monomial matrix
(Definition~\ref{d:monomial-matrix-fr}), along with some simple
examples.

%%%%%%%%%%%%%%%%%%%%%%%%%%%%%%%%%%%%%%%%%%%%%%%%%%%%%%%%%%%%%%%%%%%%%%%%%
\subsection{Upsets and downsets}\label{sub:indicator}%%%%%%%%%%%%%%%%%%%%

\begin{defn}\label{d:indicator}
The vector space $\kk[\cQ] = \bigoplus_{q\in\cQ} \kk$ that assigns
$\kk$ to every point of the poset~$\cQ$ is a $\cQ$-module with
identity maps on~$\kk$.  More generally,
\begin{enumerate}
\item\label{i:upset}%
an \emph{upset} (also called a \emph{dual order ideal}) $U \subseteq
\cQ$, meaning a subset closed under going upward in~$\cQ$ (so $U +
\RR_+^n = U$, when $\cQ = \RR^n$) determines an \emph{indicator
submodule} or \emph{upset module} $\kk[U] \subseteq \kk[\cQ]$; and
\item\label{i:downset}%
dually, a \emph{downset} (also called an \emph{order ideal}) $D
\subseteq \cQ$, meaning a subset closed under going downward in~$\cQ$
(so $D - \RR_+^n = D$, when $\cQ = \RR^n$) determines an
\emph{indicator quotient module} or \emph{downset module} $\kk[\cQ]
\onto \kk[D]$.
\end{enumerate}
When $\cQ$ is a subposet of a real partially ordered group
(Definition~\ref{d:pogroup}), an indicator module of either sort is
semialgebraic, PL, subanalytic, or of class~$\mathfrak X$ if the
corresponding upset or downset is of the same type
(Definition~\ref{d:auxiliary-hypotheses}).
\end{defn}

\begin{remark}\label{r:indicator}
Indicator submodules $\kk[U]$ and quotient modules $\kk[D]$ are
$\cQ$-modules, not merely $U$-modules or $D$-modules, by setting the
graded components indexed by elements outside of the ideals to~$0$.
It is only by viewing indicator modules as $\cQ$-modules that they are
forced to be submodules or quotients, respectively.  For relations
between these notions and those in Remark~\ref{r:curry}, again see
Curry's thesis \cite{curry-thesis}.  For example, upsets form the open
sets in the topology from Remark~\ref{r:curry}.
\end{remark}

\begin{example}\label{e:melting}
Ising crystals at zero temperature, with polygonal boundary conditions
and fixed mesh size, are semialgebraic upsets in~$\RR^n$.  That much
is by definition: fixing a mesh size means that the crystals in
question are (staircase surfaces of finitely generated) monomial
ideals in $n$ variables.  Remarkably, such crystals remain
semialgebraic in the limit of zero mesh size; see \cite{okounkov16}
for an exposition and references.
\end{example}

\begin{example}\label{e:asw}
Monomial ideals in polynomial rings with real exponents, which
correspond to upsets in $\RR^n_+$, are considered in
\cite{andersen--sather-wagstaff2015}, including aspects of primality,
irreducible decomposition, and Krull dimension.  Upsets in $\RR^n$ are
also considered in \cite{madden-mcguire2015}, where the combinatorics
of their lower boundaries, and topology of related simplicial
complexes, are investigated in cases with locally finite generating
sets.
\end{example}

\begin{defn}\label{d:connected-poset}
A poset~$\cQ$ is
\begin{enumerate}
\item\label{i:connected}%
\emph{connected} if every pair of elements $q,q' \in \cQ$ is joined by
a \emph{path} in~$\cQ$: a sequence $q = q_0 \preceq q'_0 \succeq q_1
\preceq q'_1 \succeq \dots \succeq q_k \preceq q'_k = q'$ in~$\cQ$;

\item%
\emph{upper-connected} if every pair of elements in~$\cQ$ has an
upper bound in~$\cQ$;

\item%
\emph{lower-connected} if every pair of elements in~$\cQ$ has a
lower bound in~$\cQ$; and

\item%
\emph{strongly connected} if $\cQ$ is upper-connected and
lower-connected.
\end{enumerate}
\end{defn}

\begin{example}\label{e:connected-poset}
$\RR^n$ is strongly connected.  The same is true of any partially
ordered abelian group.  (See \cite{prim-decomp-pogroup} for additional
basic theory of those posets.)
\end{example}

\begin{example}\label{e:bounded-poset}
A poset~$\cQ$ is upper-connected if (but not only if,
cf.~Example~\ref{e:connected-poset}) it has a maximum element---one
that is preceded by every element of~$\cQ$.  Similarly, $\cQ$ is
lower-connected if it has a minimum element---one that precedes every
element of~$\cQ$.
\end{example}

\begin{remark}\label{r:pi0}
The relation $q \sim q'$ defined by the existence of a path joining
$q$ to~$q'$ as in Definition~\ref{d:connected-poset}.\ref{i:connected}
is an equivalence relation.
\end{remark}

\begin{defn}\label{d:pi0}
Fix a poset~$\cQ$.  For any subset $S \subseteq \cQ$, write $\pi_0 S$
for the set of connected components of~$S$: the maximal connected
subsets of~$S$, or equivalently the classes under the relation from
Remark~\ref{r:pi0}.
\end{defn}

\begin{prop}\label{p:U->D}
Fix a poset~$\cQ$.
\begin{enumerate}
\item\label{i:U->D}%
For an upset~$U$ and a downset~$D$,
$$%
  \Hom_\cQ(\kk[U], \kk[D]) = \kk^{\pi_0(U \cap D)},
$$
a product of copies of\/~$\kk$, one for each connected component of $U
\cap D$.

\item\label{i:U'->U}%
For upsets $U$ and~$U'$,
$$%
  \Hom_\cQ(\kk[U'], \kk[U]) = \kk^{\{S\in\pi_0 U' \,\mid\, S \subseteq U\}},
$$
a product of copies of\/~$\kk$, one for each connected component
of~$U'$ contained in~$U$.

\item\label{i:D->D'}%
For downsets $D$ and~$D'$,
$$%
  \Hom_\cQ(\kk[D], \kk[D']) = \kk^{\{S\in\pi_0 D' \,\mid\, S \subseteq D\}},
$$
a product of copies of\/~$\kk$, one for each connected component
of~$D'$ contained in~$D$.
\end{enumerate}
\end{prop}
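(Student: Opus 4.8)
The plan is to compute each Hom space by analyzing what a degree-preserving linear map must do component by component, using that $\kk[U]$, $\kk[D]$, $\kk[U']$, $\kk[D']$ all have $1$-dimensional components (equal to $\kk$) precisely on the indicated up/down-sets and $0$ elsewhere. For item~\ref{i:U->D}, a homomorphism $\phi\colon \kk[U]\to\kk[D]$ is a collection of maps $\phi_q\colon \kk[U]_q\to\kk[D]_q$, which is forced to be $0$ unless $q\in U\cap D$, and otherwise is multiplication by a scalar $c_q\in\kk$. The commutativity constraint, applied to a relation $q\preceq q'$ with both $q,q'\in U\cap D$ (note the structure maps of $\kk[U]$ and $\kk[D]$ are the identity on $\kk$ over such relations), forces $c_q=c_{q'}$. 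Since the relation generated by comparabilities \emph{within} $U\cap D$ is exactly the connectedness equivalence relation of Definition~\ref{d:connected-poset}.\ref{i:connected} (a path in $U\cap D$ is a zig-zag of comparabilities, and $U\cap D$ is itself a subposet), the scalar $c_q$ is constant on each connected component of $U\cap D$ and may be chosen freely there. This gives the bijection with $\kk^{\pi_0(U\cap D)}$; one checks this identification is $\kk$-linear, which is routine.

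For item~\ref{i:U'->U}, the same setup applies: a homomorphism $\psi\colon\kk[U']\to\kk[U]$ is a scalar $c_q$ for each $q\in U'\cap U = U'\cap U$, constant on connected components of $U'\cap U$. The extra input needed is that $U'\cap U$ has the same connected components as those components of $U'$ that happen to lie inside $U$: indeed, since $U$ is an upset, if $q\in U'$ satisfies $q\in U$ then the entire principal upset above $q$ within $U'$ lies in $U$ — more to the point, any component $S\in\pi_0 U'$ either has some point in $U$, in which case (using that $U$ is upward-closed and that within a connected component one can walk up from any point) \emph{every} point of $S$ is dominated along a path by points of $S\cap U$, forcing $S\subseteq U$, or else $S\cap U=\nothing$. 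So $U'\cap U$ is a union of full components of $U'$, namely exactly $\{S\in\pi_0 U' \mid S\subseteq U\}$, and the claim follows. Item~\ref{i:D->D'} is the order-dual of item~\ref{i:U'->U}, obtained by reversing all inequalities (equivalently, applying the previous argument in $\cQ^{\op}$, under which downsets become upsets and $\kk[D]$ becomes the corresponding upset module); alternatively it can be derived directly from item~\ref{i:U->D} by noting $\kk[D']\into\kk[\cQ]\onto\kk[D]$ is not quite available, so the dualization route is cleaner.

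The main obstacle, such as it is, is the combinatorial lemma underlying items~\ref{i:U'->U} and~\ref{i:D->D'}: the claim that an intersection of an upset with an upset is a union of whole connected components of the smaller one, and likewise the dual statement. This requires care because ``connected'' here means joined by an arbitrary zig-zag path, not merely order-comparable, so one must argue that a path in $U'$ cannot leave $U$ once it has entered — which uses that each step $q_i\preceq q_i'$ or $q_i'\succeq q_{i+1}$ either goes up (staying in the upset $U$) or, when it goes down, one can reroute: if $q_i'\in U$ and $q_{i+1}\preceq q_i'$, it need not be that $q_{i+1}\in U$, so in fact the right statement is the component-level one proved by observing that every element of a connected subset $S$ of $U'$ has an upper bound in $S$ reached by a path only if $\cQ$ is suitably connected — hence the correct formulation is simply: if $S\in\pi_0 U'$ and $S\cap U\neq\nothing$ then $S\subseteq U\cap U'$, which one verifies by taking $q\in S\cap U$ and $q'\in S$ arbitrary, joining them by a path in $S\subseteq U'$, and noting that the path stays in $U'$ but we only know it hits $U$ at $q$; so the genuinely needed fact is that $\pi_0(U\cap U')$ refines $\pi_0 U'$ with each part either contained in or disjoint from each part of the other, which is immediate since $U\cap U'\subseteq U'$ implies every component of $U\cap U'$ sits inside a unique component of $U'$, and conversely the components of $U'$ meeting $U$ are exactly those lying in $U$ because $U\cap U'$ is relatively closed under the comparabilities that generate $\pi_0 U'$ — and \emph{that} last point is where upward-closure of $U$ is used. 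Modulo writing this carefully, everything else is bookkeeping.
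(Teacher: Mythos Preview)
Your argument for item~\ref{i:U->D} is essentially the paper's: reduce to scalars $c_q$ for $q\in U\cap D$, use commutativity along a zig-zag to get constancy on components, and check the converse. Fine.

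Items~\ref{i:U'->U} and~\ref{i:D->D'} have a genuine gap. You attempt to reduce to item~\ref{i:U->D} by asserting that $U'\cap U$ is a union of whole connected components of~$U'$, equivalently that a component $S\in\pi_0 U'$ meeting~$U$ must lie entirely in~$U$. This is false. Take $\cQ=\{a,b,c\}$ with $a\preceq c$ and $b\preceq c$ (and $a,b$ incomparable), let $U'=\cQ$ and $U=\{a,c\}$. Both are upsets; $U'$ is connected via $a\preceq c\succeq b$; yet $b\in U'\setminus U$. Your recipe would give $\Hom=\kk^{\pi_0(U'\cap U)}=\kk$, but in fact $\Hom=0$: commutativity at $b\preceq c$ forces $\phi_c=0$ (since $\kk[U]_b=0$), whence $\phi_a=\phi_c=0$. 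Your last paragraph circles this but lands on ``$U\cap U'$ is relatively closed under the comparabilities that generate $\pi_0 U'$'', which the same example refutes: the comparability $b\preceq c$ has $c\in U$ and $b\notin U$.

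What you are missing is exactly the constraint visible in the counterexample: points $q'\in U'\setminus U$ impose vanishing on their neighbors in~$U'\cap U$, not merely on themselves. The paper's argument uses this directly: for a connected component $S$ of~$U'$ with $S\not\subseteq U$, pick $q'\in S\setminus U$, so $\phi_{q'}=0$; then propagate $\phi=0$ along a path in~$S$. The propagation step needs a short case check using that $U$ is an upset: if $p\preceq p'$ in~$U'$ and $\phi_p=0$, then $\phi_{p'}=0$ (either $p\notin U$, forcing $\phi_{p'}=0$ by the square with bottom edge $0\to\kk[U]_{p'}$, or $p\in U$, forcing $p'\in U$ and $\phi_{p'}=\phi_p$); if $p\succeq p'$ and $\phi_p=0$, then either $p'\notin U$ or $p'\in U$ forces $p\in U$ and $\phi_{p'}=\phi_p=0$. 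This is the ``only point not already covered'' the paper alludes to, and it is not a consequence of the component combinatorics you attempted.
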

\begin{proof}
For the first claim, the action~$\phi_q$ of $\phi: \kk[U] \to \kk[D]$
on the copy of~$\kk$ in any degree $q \in U \minus D$ is~$0$ because
$\kk[D]_q = 0$, so assume $q \in U \cap D$.  Then $\phi_q = \phi_{q'}:
\kk \to \kk$ if $q \preceq q' \in U \cap D$ because $\kk[U]_q \to
\kk[U]_{q'}$ and $\kk[D]_q \to \kk[D]_{q'}$ are identity maps
on~$\kk$.  Similarly, $\phi_q = \phi_{q'}$ if $q \succeq q' \in U \cap
D$.  Induction on the length of the path in
Definition~\ref{d:connected-poset}.\ref{i:connected} shows that
$\phi_q = \phi_{q'}$ if $q$ and~$q'$ lie in the same connected
component of $U \cap D$.  Thus $\Hom_\cQ(\kk[U], \kk[D]) \subseteq
\kk^{\pi_0(U \cap D)}$.  On the other hand, specifying for each
component $S \in \pi_0(U \cap D)$ a scalar $\alpha_S \in \kk$ yields a
homomorphism $\phi: \kk[U] \to \kk[D]$, if $\phi_q$ is defined to be
multiplication by~$\alpha_S$ on the copies of $\kk = \kk[U]_q$ indexed
by $q \in S$ and $0$ for $q \in U \minus D$; that $\phi$ is indeed a
$\cQ$-module homomorphism follows because $\kk[D]_{q'} = 0$ (that is,
$q' \not\in D$) whenever $q' \succeq q \in D$ but $q'$ does not lie in
the connected component of~$U \cap D$ containing~$q$.  Said another
way, pairs of elements of $U \cap D$ either lie in the same connected
component of $U \cap D$ or they are incomparable.

The proofs of the last two claims are similar (and dual to one
another), particularly when it comes to showing that a homomorphism of
indicator modules of the same type---that is, source and target both
upset or both downset---is constant on the relevant connected
components.  The only point not already covered is that if $U'$ is a
connected upset and $U' \not\subseteq U$ then every homomorphism
$\kk[U'] \to \kk[U]$ is~$0$ because $q' \in U' \minus U$ implies
$\kk[U']_{q'} \to 0 = \kk[U]_{q'}$.
\end{proof}

The cases of interest in this paper and its sequels
\cite{essential-real,functorial-multiPH}, particulary real and
discrete partially ordered groups (Definition~\ref{d:pogroup}) such
as~$\RR^n$ and~$\ZZ^n$, have strong connectivity properties, thereby
simplifying the conclusions of Proposition~\ref{p:U->D}.  First, here
is a convenient notation.

\begin{cor}\label{c:U->D}
Fix a poset~$\cQ$ with upsets $U,U'$ and downsets $D,D'$.
\begin{enumerate}
\item\label{i:kk}%
$\Hom_\cQ(\kk[U], \kk[D]) = \kk$ if $U \cap D \neq \nothing$ and
either $U$ is lower-connected as a subposet of~$\cQ$ or $D$ is
upper-connected as a subposet of~$\cQ$.

\item\label{i:U}%
If $U$ and~$U'$ are upsets and $\cQ$ is upper-connected, then
$\Hom_\cQ(\kk[U'],\kk[U]) = \kk$\linebreak if $U' \subseteq U$
and\/~$0$ otherwise.

\item\label{i:D}%
If $D$ and~$D'$ are downsets and $\cQ$ is lower-connected, then
$\Hom_\cQ(\kk[D],\kk[D']) = \kk$ if $D \supseteq D'$ and\/~$0$
otherwise.\qed
\end{enumerate}
\end{cor}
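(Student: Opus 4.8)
The plan is to read everything off Proposition~\ref{p:U->D}, which already identifies each of the three Hom spaces with a product $\kk^{S}$ of copies of~$\kk$ indexed by a set~$S$ of connected components of an upset, a downset, or their intersection. So the whole job is to check that, under the added connectivity hypotheses, that index set is a singleton in part~\ref{i:kk}, and is a singleton or empty according to the stated containment in parts~\ref{i:U} and~\ref{i:D}.

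First I would handle part~\ref{i:kk}. By Proposition~\ref{p:U->D}.\ref{i:U->D}, $\Hom_\cQ(\kk[U],\kk[D]) = \kk^{\pi_0(U\cap D)}$, so it suffices to prove that $U\cap D$ is connected whenever it is nonempty and either $U$ is lower-connected or $D$ is upper-connected. I would argue directly with the path definition (Definition~\ref{d:connected-poset}.\ref{i:connected}): given $p,q\in U\cap D$, if $D$ is upper-connected choose an upper bound $r\in D$ of $p$ and~$q$; since $U$ is an upset and $p\preceq r$, also $r\in U$, so $r\in U\cap D$ and $p\preceq r\succeq q$ is a path inside $U\cap D$. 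Dually, if $U$ is lower-connected choose a lower bound $s\in U$ of $p$ and~$q$; since $D$ is a downset and $s\preceq q$, also $s\in D$, giving the path $p\succeq s\preceq q$ inside $U\cap D$. Either way $\pi_0(U\cap D)$ is a singleton and the Hom space is~$\kk$.

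Next, for parts~\ref{i:U} and~\ref{i:D} the key observation is: if $\cQ$ is upper-connected then every nonempty upset~$U'$ is connected, because for $p,q\in U'$ an upper bound $r\in\cQ$ automatically lies in~$U'$ (upsets are closed upward), so $p\preceq r\succeq q$ is a path in~$U'$; dually every nonempty downset is connected when $\cQ$ is lower-connected. Granting this, in part~\ref{i:U} we have $\pi_0 U'=\{U'\}$, so the index set $\{S\in\pi_0 U'\mid S\subseteq U\}$ of Proposition~\ref{p:U->D}.\ref{i:U'->U} equals $\{U'\}$ when $U'\subseteq U$ and $\nothing$ otherwise, yielding $\kk$ or~$0$; part~\ref{i:D} is verbatim the same with Proposition~\ref{p:U->D}.\ref{i:D->D'} and downsets in place of upsets.

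I do not anticipate a real obstacle; the corollary is purely a specialization of Proposition~\ref{p:U->D} once the connectivity consequences of upper- and lower-connectedness are made explicit. The one spot requiring a little care is the bookkeeping for empty index sets---reading $\kk^{\nothing}$ as the zero space, which matches $\Hom=0$ when no component fits inside the target---together with the tacit understanding in parts~\ref{i:U} and~\ref{i:D} that the source indicator modules are nonzero (an empty upset gives $\kk[U']=0$ and $\Hom=0$, not~$\kk$).
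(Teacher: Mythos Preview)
Your argument is correct and is exactly the intended one: the paper gives no proof beyond the \qed, signaling that the corollary is immediate from Proposition~\ref{p:U->D} once one notes that upper- or lower-connectedness forces the relevant component sets to be singletons, which is precisely what you spell out. Your caveat about the degenerate empty case is apt but harmless.
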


\begin{example}\label{e:disconnected-homomorphism}
Consider the poset $\NN^2$, the upset $U = \NN^2 \minus \{\0\}$, and
the downset $D$ consisting of the origin and the two standard basis
vectors.  Then $\kk[U] = \mm = \<x,y\>$ is the graded maximal ideal of
$\kk[\NN^2] = \kk[x,y]$ and $\kk[D] = \kk[\NN^2]/\mm^2$.  Now
calculate
$$%
  \Hom_{\NN^2}(\kk[U],\kk[D])
  =
  \Hom_{\NN^2}(\mm,\kk[\NN^2]/\mm^2)
  =
  \kk^2,
$$
a vector space of dimension~$2$: one basis vector preserves the
monomial~$x$ while killing the monomial~$y$, and the other basis
vector preserves~$y$ while killing~$x$.
\end{example}

\begin{example}\label{e:totally-disconnected}
For an extreme case, consider the poset $\cQ = \RR^2$ with $U$ the
closed half-plane above the antidiagonal line $y = -x$ and $D = -U$,
so that $U \cap D$ is totally disconnected: $\pi_0(U \cap D) = U \cap
D$.  Then $\Hom_\cQ(\kk[U],\kk[D]) = \kk^\RR$ is a vector space of
beyond continuum dimension, the copy of~$\RR$ in the exponent being
the antidiagonal~line.
\end{example}

The proliferation of homomorphisms in
Examples~\ref{e:disconnected-homomorphism}
and~\ref{e:totally-disconnected} is undesirable for both computational
and theoretical purposes; it motivates the following concept.

\begin{defn}\label{d:connected-homomorphism}
Let each of $S$ and $S'$ be a nonempty intersection of an upset in a
poset~$\cQ$ with a downset in~$\cQ$, so $\kk[S]$ and $\kk[S']$ are
subquotients of~$\kk[\cQ]$.  A homomorphism $\phi: \kk[S] \to \kk[S']$
is \emph{connected} if there is a scalar $\lambda \in \kk$ such that
$\phi$ acts as multiplication by~$\lambda$ on the copy of~$\kk$ in
degree $q$ for all $q \in S \cap S'$.
\end{defn}

The cases of interest in the rest of this paper concern three
situations: both $S$ and~$S'$ are upsets, or both are downsets, or $S
= U$ is an upset and $S' = D$ is downset with $U \cap D \neq
\nothing$.  However, the full generality of
Definition~\ref{d:connected-homomorphism} is required in the sequel to
this work \cite{essential-real}.

\begin{remark}\label{r:U->D}
Corollary~\ref{c:U->D} says that homomorphisms among indicator modules
are automatically connected in the presence of appropriate upper- or
lower-connectedness.
\end{remark}

%%%%%%%%%%%%%%%%%%%%%%%%%%%%%%%%%%%%%%%%%%%%%%%%%%%%%%%%%%%%%%%%%%%%%%%%%
\subsection{Fringe presentations}\label{sub:fringe}\mbox{}%%%%%%%%%%%%%%%

\begin{defn}\label{d:fringe}
Fix any poset~$\cQ$.  A \emph{fringe presentation} of a
$\cQ$-module~$\cM$ is
\begin{itemize}
\item%
a direct sum~$F$ of upset modules~$\kk[U]$,
\item%
a direct sum~$E$ of downset modules~$\kk[D]$, and
\item%
a homomorphism $F \to E$ of $\cQ$-modules with
\begin{itemize}
  \item%
  image isomorphic to~$\cM$ and
  \item%
  components $\kk[U] \to \kk[D]$ that are connected
  (Definition~\ref{d:connected-homomorphism}).
\end{itemize}
\end{itemize}
A fringe presentation
\begin{enumerate}
\item%
is \emph{finite} if the direct sums are finite;

\item\label{i:dominate}%
\emph{dominates} a constant subdivision of~$\cM$ if the subdivision is
subordinate to each summand $\kk[U]$ of~$F$ and~$\kk[D]$ of~$E$; and

\item\label{i:auxiliary-fringe}%
is \emph{semialgebraic}, \emph{PL}, \emph{subanalytic}, or \emph{of
class~$\mathfrak X$} if $\cQ$ is a subposet of a partially ordered
real vector space of finite dimension and the fringe presentation
dominates a constant subdivision of the corresponding type
(Definition~\ref{d:auxiliary-hypotheses}).
\end{enumerate}
\end{defn}

Fringe presentations are effective data structures via the following
notational trick.  Topologically, it highlights that births occur
along the lower boundaries of the upsets and deaths occur along the
upper boundaries of the downsets, with a linear map over the ground
field to relate them.

\begin{defn}\label{d:monomial-matrix-fr}
Fix a finite fringe presentation $\phi: \bigoplus_p \kk[U_p] = F \to E
= \bigoplus_q \kk[D_q]$.  A \emph{monomial matrix} for $\phi$ is an
array of \emph{scalar entries}~$\phi_{pq}$ whose columns are labeled
by the \emph{birth upsets}~$U_p$ and whose rows are labeled by the
\emph{death downsets}~$D_q$:
$$%
\begin{array}{ccc}
  &
  \monomialmatrix
	{U_1\\\vdots\ \\U_k\\}
	{\begin{array}{ccc}
		   D_1    & \cdots &    D_\ell   \\
		\phi_{11} & \cdots & \phi_{1\ell}\\
		\vdots    & \ddots &   \vdots    \\
		\phi_{k1} & \cdots & \phi_{k\ell}\\
	 \end{array}}
	{\\\\\\}
\\
  \kk[U_1] \oplus \dots \oplus \kk[U_k] = F
  & \fillrightmap
  & E = \kk[D_1] \oplus \dots \oplus \kk[D_\ell].
\end{array}
$$
\end{defn}

\begin{prop}\label{p:scalars}
With notation as in Definition~\ref{d:monomial-matrix-fr}, $\phi_{pq}
= 0$ unless $U_p \cap D_q \neq \nothing$.  Conversely, if an array of
scalars $\phi_{pq} \in \kk$ with rows labeled by upsets and columns
label\-ed by downsets has $\phi_{pq} = 0$ unless $U_p \cap D_q \neq
\nothing$, then it represents a fringe~\mbox{presentation}.
\end{prop}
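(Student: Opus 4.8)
The plan is to prove the two implications separately, each by reducing to the componentwise description of homomorphisms between indicator modules supplied by Proposition~\ref{p:U->D}. For the forward direction, suppose $\phi_{pq}$ is the scalar entry of a monomial matrix for a genuine fringe presentation $\phi \colon F \to E$, so the $(p,q)$ component $\phi^{pq} \colon \kk[U_p] \to \kk[D_q]$ is a connected homomorphism of $\cQ$-modules. By Definition~\ref{d:connected-homomorphism} this component acts as multiplication by a single scalar $\lambda$ on the copy of $\kk$ in each degree $q \in U_p \cap D_q$, and that scalar is exactly $\phi_{pq}$. If $U_p \cap D_q = \nothing$, then $\kk[U_p]_q$ and $\kk[D_q]_q$ are never simultaneously nonzero, so the only homomorphism between them is the zero map in every degree; hence $\phi_{pq} = 0$. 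This is essentially just unwinding the definitions together with the first sentence of the proof of Proposition~\ref{p:U->D}.

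For the converse, I would start from an array of scalars $\phi_{pq} \in \kk$, indexed by a finite list of upsets $U_p$ and downsets $D_q$, with $\phi_{pq} = 0$ whenever $U_p \cap D_q = \nothing$. Set $F = \bigoplus_p \kk[U_p]$ and $E = \bigoplus_q \kk[D_q]$. The task is to produce, for each $(p,q)$, a connected homomorphism $\phi^{pq} \colon \kk[U_p] \to \kk[D_q]$ realizing the scalar $\phi_{pq}$, and then to assemble these into $\phi \colon F \to E$ and identify the image. When $\phi_{pq} = 0$ we take $\phi^{pq} = 0$, which is vacuously connected. When $\phi_{pq} \neq 0$ we have $U_p \cap D_q \neq \nothing$ by hypothesis, and I want to invoke the explicit construction in the proof of Proposition~\ref{p:U->D}\ref{i:U->D}: define $\phi^{pq}$ to act as multiplication by $\phi_{pq}$ on $\kk[U_p]_r$ for every $r$ lying in a connected component of $U_p \cap D_q$, and by $0$ on $\kk[U_p]_r$ for $r \in U_p \minus D_q$. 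The catch is that a connected homomorphism must use the \emph{same} scalar on every component of $U_p \cap D_q$, not independently chosen scalars; since we are free to make that choice, we simply use $\phi_{pq}$ on all components at once, and the argument in Proposition~\ref{p:U->D} that this is a well-defined $\cQ$-module homomorphism (resting on the observation that comparable elements of $U_p \cap D_q$ lie in the same component) goes through verbatim. So $\phi^{pq}$ is a connected homomorphism, and $\phi = (\phi^{pq})$ is then a homomorphism $F \to E$ whose components $\kk[U_p] \to \kk[D_q]$ are all connected; that is a fringe presentation of its own image $\cM := \image \phi$, in the sense of Definition~\ref{d:fringe} with the direct sums finite.

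The only genuinely substantive point — and the step I expect to require the most care — is verifying that the assembled map $\phi$ really has connected components and is a legitimate $\cQ$-module homomorphism, i.e.\ that nothing goes wrong when several columns and rows interact; but this is immediate because each component is checked separately and Definition~\ref{d:fringe} imposes no compatibility condition across components beyond each one being connected. Thus I would phrase the converse as: choosing the scalars $\phi_{pq}$ as prescribed defines, component by component via the recipe in the proof of Proposition~\ref{p:U->D}, a homomorphism $\phi \colon F \to E$ with connected components, hence a finite fringe presentation of $\image \phi$. Nothing is claimed here about \emph{which} module $\cM$ is presented — only that \emph{some} fringe presentation is represented — so the argument need not match a prescribed target. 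I would close by remarking that the finiteness clause of Definition~\ref{d:fringe} and the auxiliary-hypothesis clauses play no role in this statement; only the connectedness of the components is at issue, and that is exactly what the vanishing condition $\phi_{pq} = 0$ on empty intersections encodes.
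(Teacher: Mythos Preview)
Your proof is correct and follows exactly the same approach as the paper, which simply cites Proposition~\ref{p:U->D}.\ref{i:U->D} and Definition~\ref{d:connected-homomorphism}. Your version is a careful unpacking of what those two references encode; the content is identical.
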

\begin{proof}
Proposition~\ref{p:U->D}.\ref{i:U->D} and
Definition~\ref{d:connected-homomorphism}.
\end{proof}

\begin{example}\label{e:one-param-fringe}
Fringe presentation in one parameter reflects the usual matching
between left endpoints and right endpoints of a module, once it has
been decomposed as a direct sum of bars.  A single bar, say an
interval $[a,b)$ that is closed on the left and open on the right, has
fringe presentation
$$%
\hspace{8ex}
\psfrag{a}{\footnotesize\raisebox{-.2ex}{$a$}}
\psfrag{b}{\footnotesize\raisebox{-.2ex}{$b$}}
\psfrag{vert-to}{\small\raisebox{-.2ex}{$\downarrow$}}
\psfrag{vert-into}{\small\raisebox{-.2ex}{$\lhookdownarrow$}}
\psfrag{vert-onto}{\small\raisebox{-.2ex}{$\twoheaddownarrow$}}
\psfrag{has image}{}
\begin{array}{@{}l@{}}
\includegraphics[height=20mm]{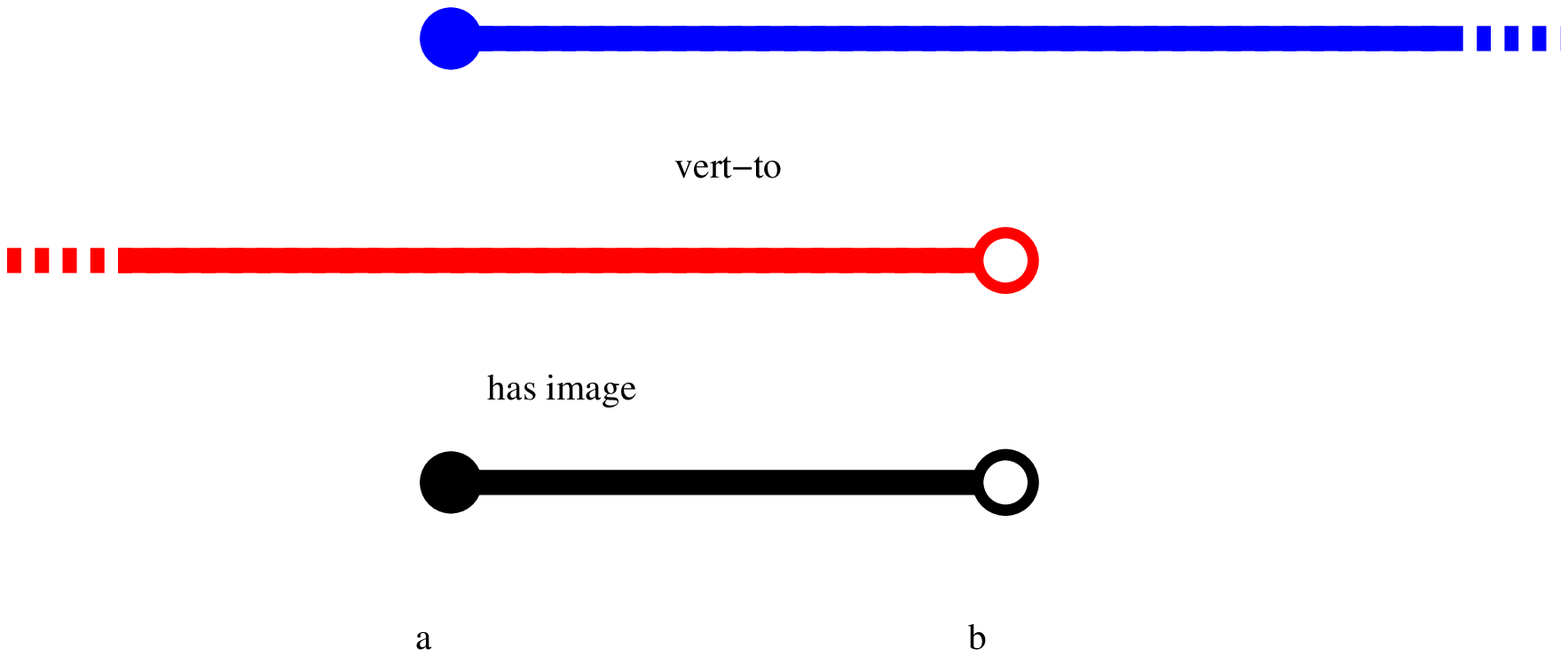}\\[-5ex]
\llap{with image\hspace{-1.5ex}}\\[2ex]
\end{array}
$$
in which a subset $S \subseteq \RR$ is drawn instead of writing
$\kk[S]$.  With multiple bars, the bijection from left to right
endpoints yields a monomial matrix whose scalar entries form the
identity, with rows labeled by positive rays having the specified left
endpoints (the ray is the whole real line when the left endpoint
is~$-\infty$) and columns labeled by negative rays having the
corresponding right endpoints (again, the whole line when the right
endpoint~is~$+\infty$).  In practical terms, the rows and columns can
be labeled simply by the endpoints themselves, with (say) a bar over a
closed endpoint and a circle over an open one.  Thus a standard bar
code, in monomial matrix notation, has the form
$$%
\begin{array}{c}
\monomialmatrix
	{ \blu \ol a_1 \\ \blu\vdots\,\ \\ \blu \ol a_k \\}
	{\begin{array}{ccc}
		\red \ob_1 & \red\cdots & \red \ob_k \\
		    1    &            &          \\
		         &  \ddots    &          \\
		         &            &     1    \\
	 \end{array}}
	{\\\\\\}.
\end{array}
$$
\end{example}

\begin{example}\label{e:two-param-fringe}
Although there are many opinions about what the multiparameter
analogue of a bar code should be, the analogue of a single bar is
generally accepted to be some kind of interval in the parameter
poset---that is, $\kk[U \cap D]$, where $U$ is an upset and $D$ is a
downset---sometimes with restrictions on the shape of the interval,
depending on context.  This case of a single bar explains the
terminology ``birth upset'' and ``death downset''.  For instance, a
fringe presentation of the yellow interval%
$$%
\hspace{-2.2ex}
\ \,\
\begin{array}{@{}r@{\hspace{-5.5pt}}|@{}l@{}}
\raisebox{-.2mm}{\includegraphics[height=25mm]{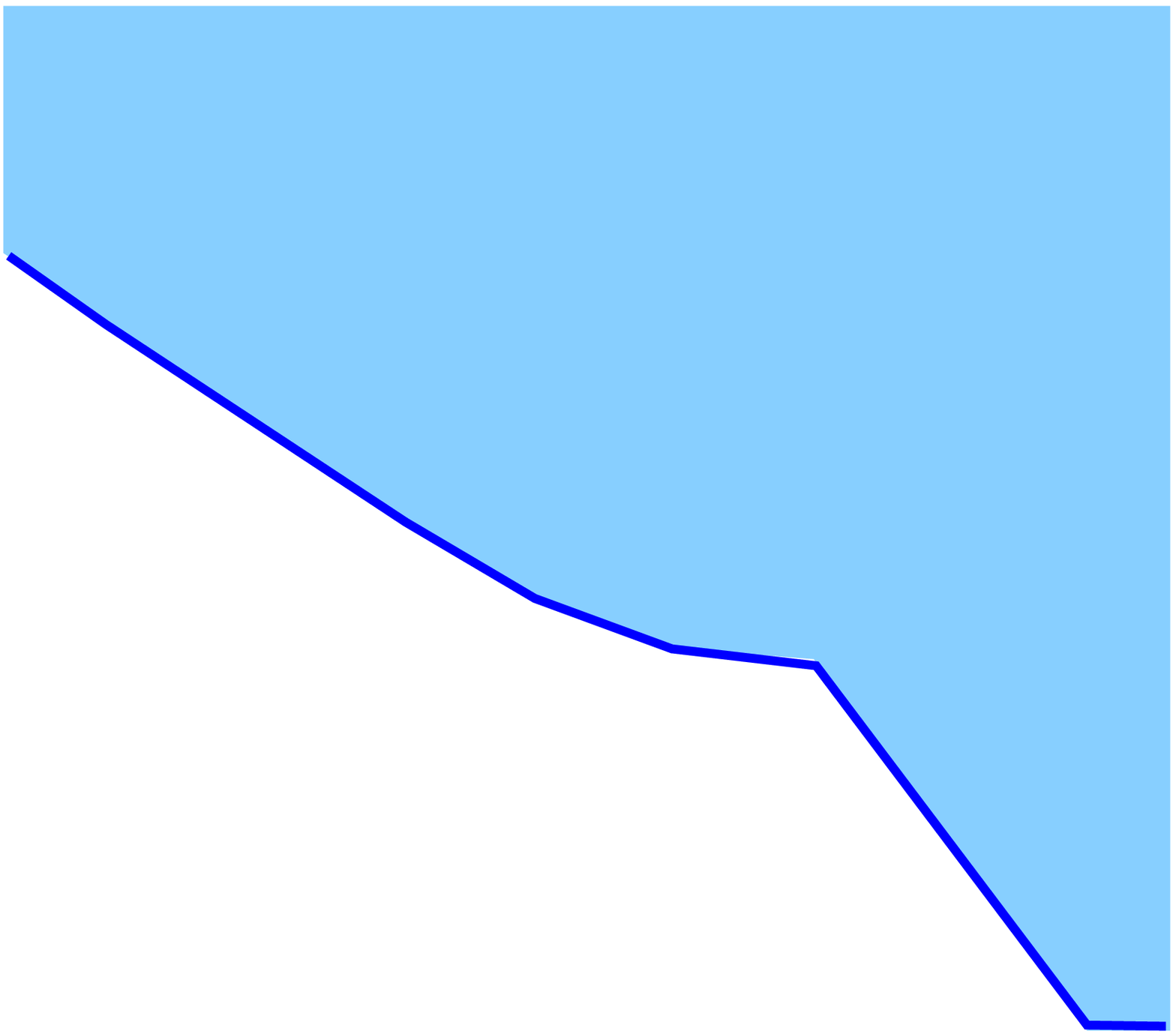}}
&\ \,\hspace{-.2pt}\\[-4.1pt]\hline
\end{array}
\ \to\
\begin{array}{@{}r@{\hspace{-.3pt}}|@{}l@{}}
\raisebox{-5mm}{\includegraphics[height=27mm]{red-downset}}
&\ \,\\[-6.3pt]\hline
\end{array}
\hspace{1.6ex}
\begin{array}{@{}c@{}}
\text{with image}\\[0ex]
\end{array}
\hspace{.7ex}
\hspace{.1pt}
\begin{array}{c}
\\[-1.5ex]
\begin{array}{@{}r@{\hspace{-.4pt}}|@{}l@{}}
\includegraphics[height=25mm]{semialgebraic}&\ \,\hspace{-.3pt}\\[-4.2pt]\hline
\end{array}
\\[-1.5ex]\mbox{}
\end{array}
\hspace{-6pt}
$$
locates the births along the lower boundary of the blue upset and the
deaths along the upper boundary of the red downset.  The scalar
entries relate the births to the deaths.  In this special case of one
bar, the monomial matrix is $1 \times 1$ with a single nonzero scalar
entry; choosing bases appropriately, this nonzero entry might as well
be~$1$.
\end{example}

\pagebreak

\begin{example}\label{e:flange-switching}
Consider an $\NN^2$-filtration of the following simplicial complex.
$$%
  \includegraphics[width=50mm]{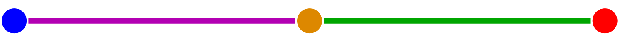}
$$
Each simplex is present above the correspondingly colored rectangular
curve in the following diagram, which theoretically should extend
infinitely far up and to the~right.
$$%
  \includegraphics[width=65mm]{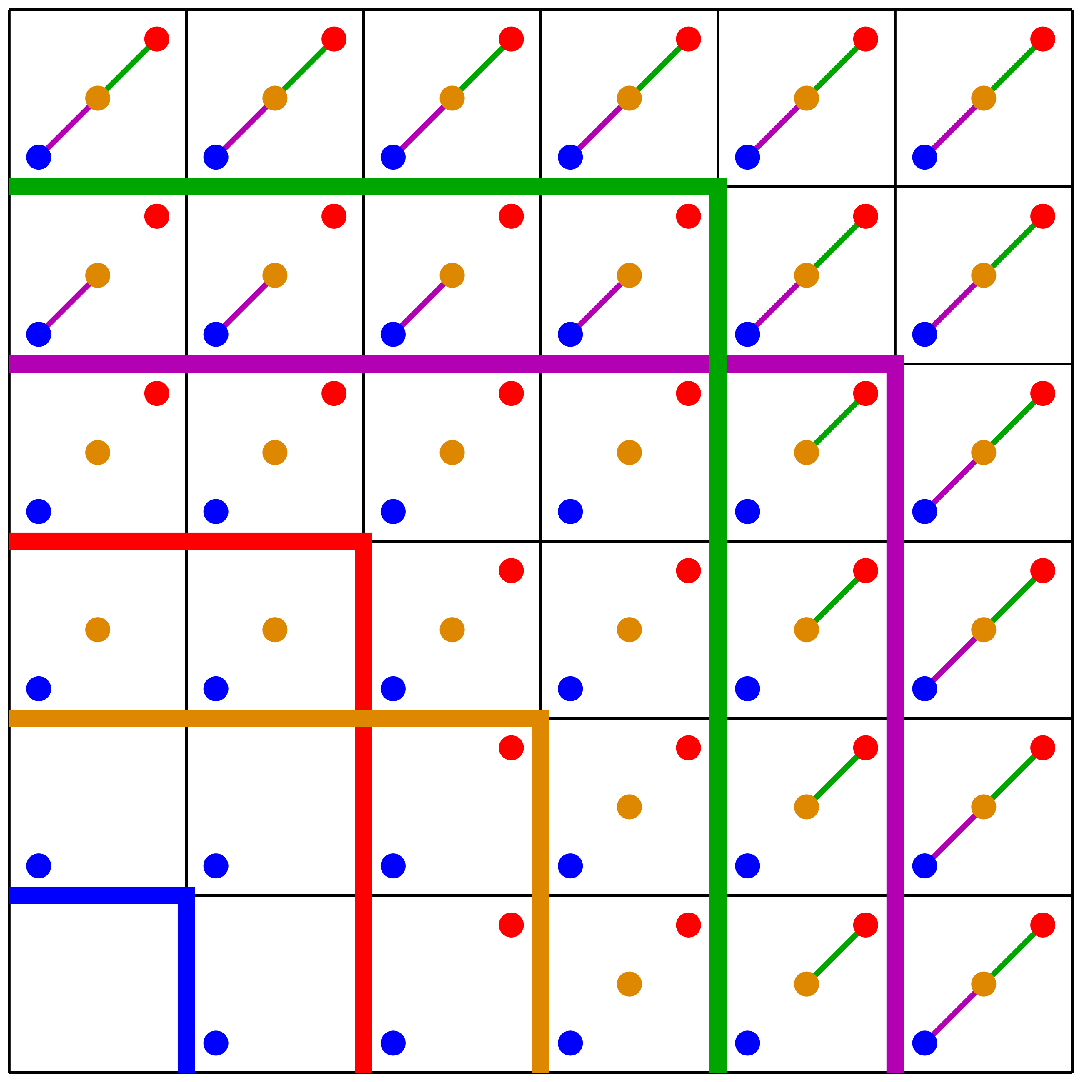}
$$
Each little square depicts the simplicial complex that is present at
the parameter occupying its lower-left corner.  Taking zeroth homology
yields an $\NN^2$-module that replaces the simplicial complex in each
box with the vector space spanned by its connected components.  A
fringe presentation for this $\NN^2$-module is
$$%
\begin{array}{c}
\\
\monomialmatrix
	{\includegraphics[width=10mm]{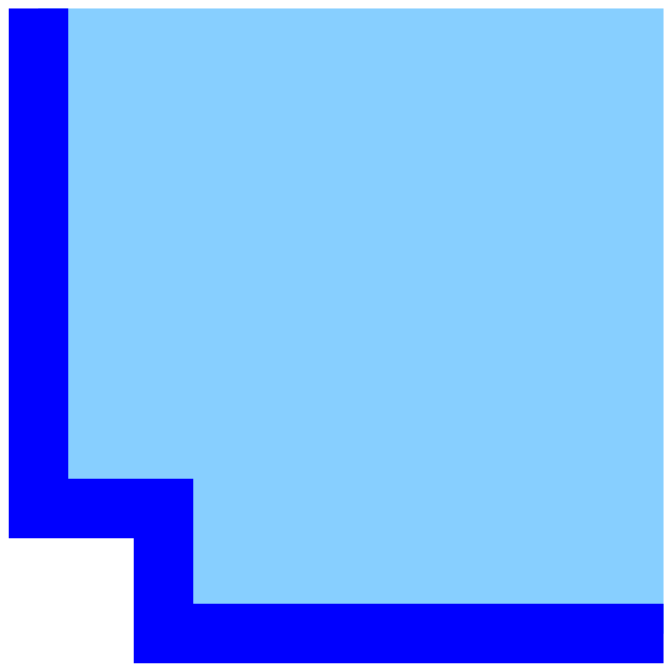}\\
	 \includegraphics[width=10mm]{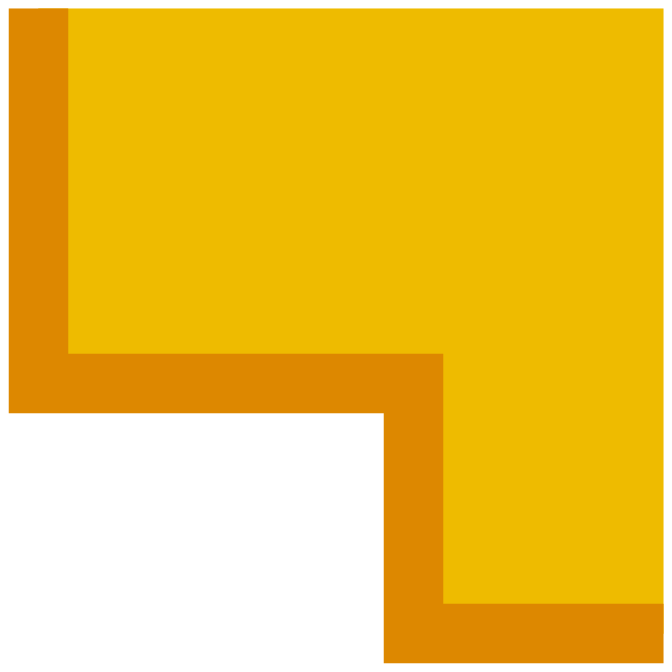}\\
	 \includegraphics[width=10mm]{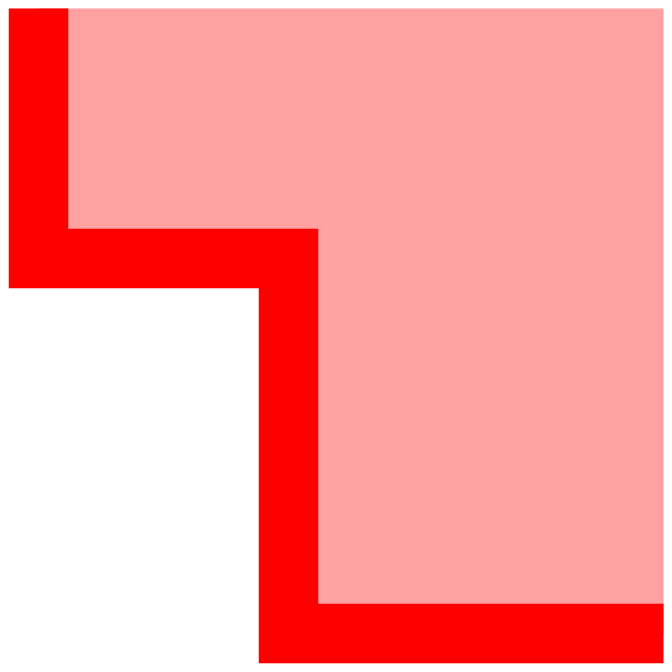}\\}
	{\begin{array}{ccc}
	\\[-8ex]
	\includegraphics[width=10mm]{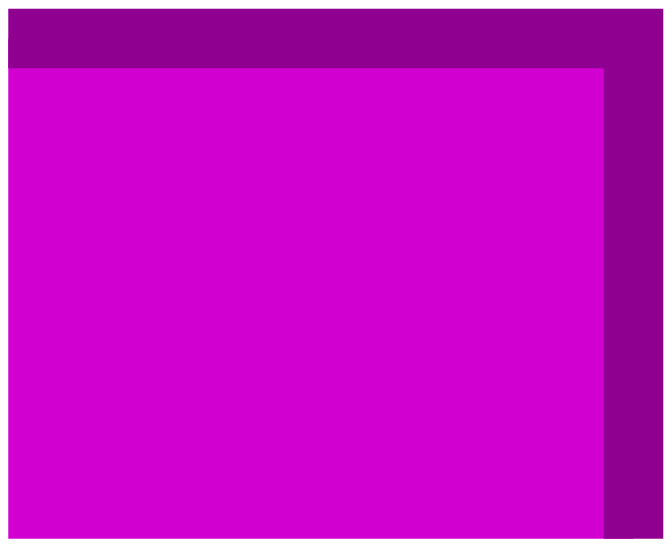}&
	\includegraphics[width=10mm]{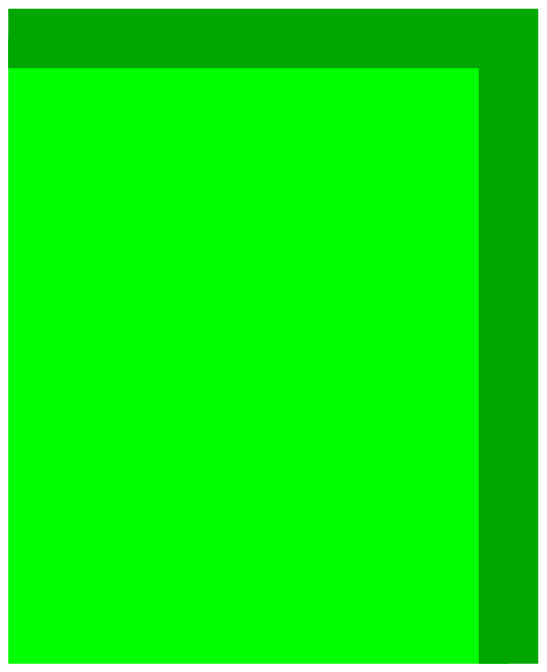}&
	\includegraphics[width=10mm]{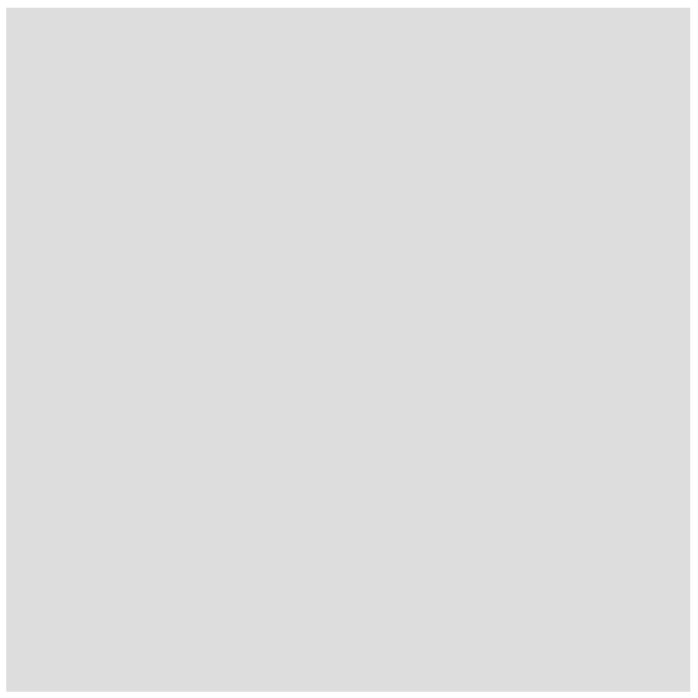}\\
		\\[-2ex] 1 & 0 &1 \\[2.5ex]
		\\[-2ex]-1 & 1 &1 \\[2.5ex]
		\\[-2ex] 0 &-1 &1 \\
	 \end{array}}
	{\\\\\\\\\\\\\\},
\end{array}
$$
where the grey square atop the third column represents the downset
that is all of~$\NN^2$.  This fringe presentation means that, for
example, the connected component that is the blue endpoint of the
simplicial complex is born along the union of the axes with the origin
removed but the point $\bigl[\twoline 11\bigr]$ appended.  The purple
downset, corresponding to the left edge, records the death---along the
upper purple boundary---of the homology class represented by the
difference of the blue (left) and gold (middle) vertices.
Computations and figures for this example were kindly provided by
Ashleigh Thomas.
\end{example}

\begin{remark}\label{r:portmanteau-fr}
The term ``fringe'' is a portmanteau of ``free'' and ``injective''
(that is, ``frinj''), the point being that it combines aspects of free
and injective resolutions while also conveying that the data structure
captures trailing topological features at both the birth and death
ends.
\end{remark}

%%%%%%%%%%%%%%%%%%%%%%%%%%%%%%%%%%%%%%%%%%%%%%%%%%%%%%%%%%%%%%%%%%%%%%%%%
\section{Encoding poset modules}\label{s:encoding}%%%%%%%%%%%%%%%%%%%%%%%
%%%%%%%%%%%%%%%%%%%%%%%%%%%%%%%%%%%%%%%%%%%%%%%%%%%%%%%%%%%%%%%%%%%%%%%%%

Sections~\ref{s:tame} and~\ref{s:fringe} introduce two finiteness
conditions: a topological one (tameness, Definition~\ref{d:tame}),
which is the intuitive control of homological variation in a
filtration of a topological space, and an algebraic one (fringe
presentation, Definition~\ref{d:fringe}), which provides effective
data structures.  To interpolate between them, a third finiteness
condition, this one combinatorial in nature (finite encoding,
Definition~\ref{d:encoding}), serves as a theoretical tool whose
functorial essence supports much of the development in this paper; the
category of tame modules (Section~\ref{sub:cat}) is best dealt with
using this language, for instance.  The main result of
Section~\ref{s:encoding}, namely Theorem~\ref{t:tame}, says that tame
$\cQ$-modules can be encoded in the manner of
Definition~\ref{d:encoding}.  Theorems~\ref{t:constant-uptight}
and~\ref{t:tame} are a substantial portion of the main result of the
paper (Theorem~\ref{t:syzygy}), and their proofs contribute key
arguments not repeated there although their
statements~are~largely~\mbox{subsumed}.

%%%%%%%%%%%%%%%%%%%%%%%%%%%%%%%%%%%%%%%%%%%%%%%%%%%%%%%%%%%%%%%%%%%%%%%%%
\subsection{Finite encoding}\label{sub:encoding}\mbox{}%%%%%%%%%%%%%%%%%%

\begin{defn}\label{d:encoding}
Fix a poset~$\cQ$.  An \emph{encoding} of a $\cQ$-module $\cM$ by a
poset~$\cP$ is a poset morphism $\pi: \cQ \to \cP$ together with a
$\cP$-module $\cH$ such that $\cM \cong \pi^*\cH =
\bigoplus_{q\in\cQ}H_{\pi(q)}$, the \emph{pullback of~$\cH$
along~$\pi$}, which is naturally a $\cQ$-module.  The encoding is
\emph{finite} if
\begin{enumerate}
\item%
the poset $\cP$ is finite, and
\item%
the vector space $H_p$ has finite dimension for all $p \in \cP$.
\end{enumerate}
\end{defn}

\begin{example}\label{e:wing-subdivision}
Example~\ref{e:toy-model-fly-wing} shows a constant isotypic
subdivision of $\RR^2$ which happens to form a poset and therefore
produces an encoding.
\end{example}

\begin{example}\label{e:flange-switching'}
A finite encoding of the module in Example~\ref{e:flange-switching} is
as follows.
$$%
\psfrag{0-1}{\tiny$\left[\begin{array}{@{}c@{}}
	0\\[-.5ex]1\end{array}\right]$}
\psfrag{10-00-01}{\tiny$\left[\begin{array}{@{}c@{\ }c@{}}
	1&0\\[-.5ex]0&0\\[-.5ex]0&1\end{array}\right]$}
\psfrag{00-10-01}{\tiny$\left[\begin{array}{@{}c@{\ }c@{}}
	0&0\\[-.5ex]1&0\\[-.5ex]0&1\end{array}\right]$}
\psfrag{110-001}{\tiny$\left[\begin{array}{@{}c@{\ }c@{\ }c@{}}
	1&1&0\\[-.5ex]0&0&1\end{array}\right]$}
\psfrag{100-011}{\tiny$\left[\begin{array}{@{}c@{\ }c@{\ }c@{}}
	1&0&0\\[-.5ex]0&1&1\end{array}\right]$}
\psfrag{11}{\tiny$\left[\begin{array}{@{}c@{\ }c@{}}
	1&1\end{array}\right]$}
\psfrag{kk}{\footnotesize$\kk$}
\psfrag{kk2}{\footnotesize$\kk^2$}
\psfrag{kk3}{\footnotesize$\kk^3$}
\begin{array}{@{}c@{}}\includegraphics[height=63mm]{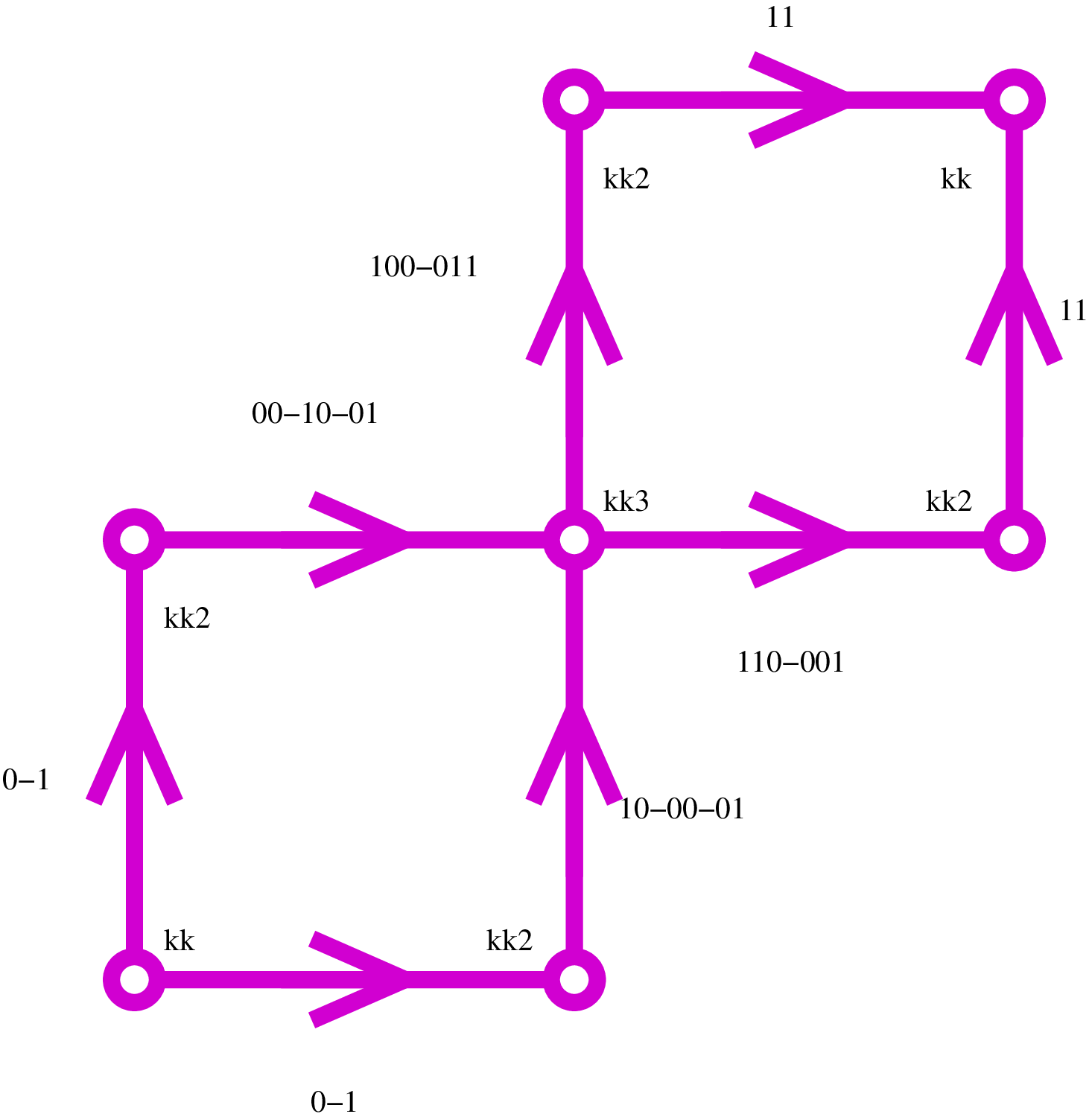}\end{array}
$$
\end{example}

\begin{example}\label{e:subdivide}
There is no natural way to impose a poset structure on the set of
regions in a constant subdivision.  Take, for example, $\cQ = \RR^2$
and $\cM = \kk_\0 \oplus \kk[\RR^2]$, where $\kk_\0$ is the
$\RR^2$-module whose only nonzero component is at the origin, where it
is a vector space of dimension~$1$.  This module~$\cM$ induces only
two isotypic regions, namely the origin and its complement, and they
constitute a constant subdivision.
$$%
\includegraphics[width=10mm]{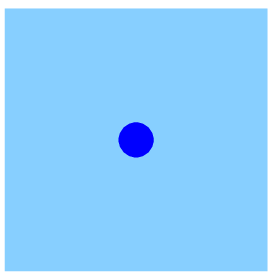}
\begin{array}{c}=\\[4ex]\end{array}
\includegraphics[width=10mm]{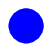}
\hspace{-1.25mm}\begin{array}{c}\cup\\[4ex]\end{array}\hspace{1.8mm}
\includegraphics[width=10mm]{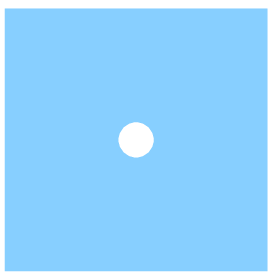}
\vspace{-3ex}
$$
Neither of the two regions has a stronger claim to precede the other,
but at the same time it would be difficult to justify forcing the
regions to be incomparable.
\end{example}

\begin{example}\label{e:convex-projection}
Take $\cQ = \ZZ^n$ and $\cP = \NN^n$.  The \emph{convex projection}
$\ZZ^n \to \NN^n$ sets to~$0$ every negative coordinate.  The pullback
under convex projection is the \v Cech hull
\cite[Definition~2.7]{alexdual}.  More generally, suppose $\aa \preceq
\bb$ in~$\ZZ^n$.  The interval $[\aa,\bb] \subseteq \ZZ^n$ is a box
(rectangular parallelepiped) with lower corner at~$\aa$ and upper
corner at~$\bb$.  The \emph{convex projection} $\pi: \ZZ^n \to
[\aa,\bb]$ takes every point in~$\ZZ^n$ to its closest point in the
box.  A $\ZZ^n$-module is \emph{finitely determined} if it is finitely
encoded by~$\pi$.
\end{example}

\begin{example}\label{e:indicator}
The indicator module $\kk[\cQ]$ is encoded by the morphism from $\cQ$
to the one-point poset with vector space $\cH = \kk$.  This
generalizes to other indicator modules.
\begin{enumerate}
\item%
Any upset module $\kk[U] \subseteq \kk[\cQ]$ is encoded by a morphism
from~$\cQ$ to the chain~$P$ of length~$1$, consisting of two points $0
< 1$, that sends $U$ to~$1$ and the complement $\ol U$ to~$0$.  The
$P$-module~$H$ that pulls back to~$\kk[U]$ has $H_0 =\nolinebreak
0$~and~$H_1 = \kk$.
\item%
Dually, any downset module $\kk[D]$ is also encoded by a morphism
from~$\cQ$ to the chain $P$ of length~$1$, but this one sends $D$
to~$0$ and the complement $\ol D$ to~$1$, and the $P$-module~$H$ that
pulls back to~$\kk[D]$ has $H_0 = \kk$~and~$H_1 = 0$.
\end{enumerate}
\end{example}

\begin{defn}\label{d:subordinate-encoding}
Fix a poset~$\cQ$ and a $\cQ$-module~$\cM$.
\begin{enumerate}
\item\label{i:morphism}%
A poset morphism $\pi: \cQ \to \cP$ or an encoding of a $\cQ$-module
(perhaps different from~$\cM$) is \emph{subordinate} to~$\cM$ if there
is a $\cP$-module~$\cH$ such that~$\cM \cong\nolinebreak \pi^*\cH$.

\item\label{i:auxiliary-encoding}%
When $\cQ$ is a subposet of a partially ordered real vector space, an
encoding of~$\cM$ is \emph{semialgebraic}, \emph{PL},
\emph{subanalytic}, or \emph{of class~$\mathfrak X$} if the partition
of~$\cQ$ formed by the fibers of~$\pi$ is of the corresponding type
(Definition~\ref{d:auxiliary-hypotheses}).
\end{enumerate}
\end{defn}

\begin{example}\label{e:antidiagonal'}
The ``antidiagonal'' $\RR^2$-module $\cM$ in
Example~\ref{e:antidiagonal} has a semialgebraic poset encoding by the
chain with three elements, where the fiber over the middle element is
the antidiagonal line, and the fibers over the top and bottom elements
are the open half-spaces above and below the line, respectively.
% (This is the uptight poset for the two upsets that are the closed
% and open half-spaces bounded below by the antidiagonal.)
In contrast, using the diagonal line spanned by $\left[\twoline
11\right] \in \RR^2$ instead of the antidiagonal line yields a module
with no finite encoding; see Example~\ref{e:diagonal}.
\end{example}

\begin{lemma}\label{l:constant}
An indicator module is constant on every fiber of a poset morphism
$\pi: \cQ \to \cP$ if and only if the module is the pullback
along~$\pi$ of an indicator $\cP$-module.
\end{lemma}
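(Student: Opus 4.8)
The plan is to prove the two implications separately: the direction ``pullback of an indicator $\cP$-module $\implies$ constant on every fiber'' is soft and in fact holds for pullbacks of arbitrary $\cP$-modules, while the reverse direction ``constant on every fiber $\implies$ pullback of an indicator $\cP$-module'' carries the content, and comes down to showing that the support of the indicator module, which constancy forces to be a union of fibers, pushes forward along~$\pi$ to an upset (respectively downset) of~$\cP$.

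For the easy direction I would observe that for \emph{every} $\cP$-module~$\cH$ the pullback $\pi^*\cH$ is constant on every fiber of~$\pi$: on a fiber $F_p = \pi^{-1}(p)$ the restriction of $\pi^*\cH$ has all graded pieces equal to~$\cH_p$ and all internal structure homomorphisms equal to the structure map of~$\cH$ for the relation $p \preceq p$, namely $\id_{\cH_p}$, so $F_p$ is a constant region with $M_{F_p} = \cH_p$ and identity isomorphisms. The no-monodromy clause of Definition~\ref{d:constant-subdivision}, both within a single fiber and between two fibers $F_p$ and~$F_{p'}$, is automatic: for any comparable pair $\ii \preceq \jj$ with $\ii \in F_p$ and $\jj \in F_{p'}$, the induced composite $M_{F_p} \to (\pi^*\cH)_\ii \to (\pi^*\cH)_\jj \to M_{F_{p'}}$ is the unique structure homomorphism $\cH_p \to \cH_{p'}$ of~$\cH$, independent of the choice of~$\ii$ and~$\jj$. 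Specializing to~$\cH$ an indicator $\cP$-module yields the implication.

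For the substantive direction, write the indicator $\cQ$-module as $\cM = \kk[S]$ with $S$ an upset of~$\cQ$ (the downset case is dual, swapping upsets for downsets and submodules for quotient modules). Assume $\cM$ is constant on every fiber. The first step is that $S$ is a union of fibers, i.e.\ $S = \pi^{-1}(\pi(S))$: if some fiber $F_p$ met both $S$ and its complement, then $\kk[S]$ restricted to~$F_p$ would have a graded piece equal to~$\kk$ and another equal to~$0$, which no single space~$M_{F_p}$ can be isomorphic to, contradicting that~$F_p$ is a constant region; so each fiber lies in~$S$ or is disjoint from~$S$, and then $q \in \pi^{-1}(\pi(S))$ forces $F_{\pi(q)} \cap S \neq \nothing$, hence $F_{\pi(q)} \subseteq S$, hence $q \in S$, the reverse inclusion being trivial. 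Put $V := \pi(S) \subseteq \cP$. Since $S = \pi^{-1}(V)$, the $\cP$-module $\kk[V]$ pulls back along~$\pi$ to exactly $\kk[\pi^{-1}(V)] = \kk[S] = \cM$, so it remains only to check that~$V$ is an upset of~$\cP$, which is what makes $\kk[V]$ a bona fide indicator $\cP$-module. For this, take $p \in V$ and $p' \succeq p$ in~$\cP$: the relation $p \preceq p'$ is carried by a relation $q \preceq q'$ in~$\cQ$ lying over it (in general by a finite chain of such), and since $p \in V = \pi(S)$ and $S$ is a union of fibers we get $q \in \pi^{-1}(p) \subseteq S$, whence $q' \in S$ because $S$ is an upset, so $\pi(q') \in V$; iterating along the chain carries membership all the way up to~$p'$. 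Hence $V$ is an upset, $\cM = \pi^*\kk[V]$, and the proof is complete.

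The crux, and the place where the structure of the poset morphism~$\pi$ does real work, is the last step: showing $V = \pi(S)$ is an upset (respectively downset) of~$\cP$. Informally one needs that order in~$\cP$ reflects order in~$\cQ$ lying over it, so that membership in an upset propagates upward through~$\pi$; everything else is bookkeeping. It is worth noting that the no-monodromy condition of Definition~\ref{d:constant-subdivision} is used only in the soft reverse direction, where it holds automatically because all the relevant composites are either identity maps or zero.
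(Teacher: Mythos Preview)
Your argument for the substantive direction has a genuine gap at the step where you assert that a relation $p \preceq p'$ in~$\cP$ ``is carried by a relation $q \preceq q'$ in~$\cQ$ lying over it (in general by a finite chain of such).''  This lifting property is not part of the definition of a poset morphism and fails in general.  Concretely, let $\cQ = \{a,b,c\}$ be a three-element antichain, let $\cP = \{0 < 1 < 2\}$ be a three-element chain, and let $\pi$ be the bijection $\pi(a)=0$, $\pi(b)=1$, $\pi(c)=2$; this is vacuously a poset morphism.  The singleton $\{b\}$ is an upset (and a downset) of~$\cQ$ and is a union of fibers, so $\kk[\{b\}]$ is an indicator $\cQ$-module constant on every fiber.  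But $\pi(\{b\}) = \{1\}$ is neither an upset nor a downset of~$\cP$, and since $\pi$ is bijective on underlying sets there is no other candidate~$V$ with $\pi^{-1}(V)=\{b\}$; hence $\kk[\{b\}]$ is not the pullback of any indicator $\cP$-module.  Your argument would attempt to lift $1 \preceq 2$ to a relation in~$\cQ$, but $b$ and~$c$ are incomparable.

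The paper's own one-line proof makes exactly the same leap (``the image $\pi(U) \subseteq \cP$ is an upset''), so the issue lies with the lemma as stated rather than with your approach, which is otherwise the same as the paper's.  What is missing is a hypothesis ensuring that relations in~$\cP$ come from relations in~$\cQ$---for instance that the partial order on~$\cP$ is generated by the image under~$\pi$ of the order on~$\cQ$, as holds automatically for the uptight posets of Definition~\ref{d:uptight-poset}.  Under that hypothesis your chain-lifting argument goes through verbatim.  In the paper's sole application of the lemma (the proof of Theorem~\ref{t:syzygy}), the indicator $\cQ$-modules in question arise as pullbacks along a composite $\iota\circ\pi$ with $\iota$ injective, so the factorization $(\iota\circ\pi)^* = \pi^*\circ\iota^*$ directly exhibits them as pullbacks along~$\pi$ of indicator $\cP$-modules, and the conclusion holds there without appeal to the problematic direction.
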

\begin{proof}
The ``if'' direction is by definition.  For the ``only if'' direction,
observe that if $U \subseteq \cQ$ is an upset that is a union of
fibers of~$\cP$, then the image $\pi(U) \subseteq \cP$ is an upset
whose preimage equals~$U$.  The same argument works for downsets.
\end{proof}

\begin{example}[Pullbacks of flat and injective modules]\label{e:pullback}
An indecomposable flat $\ZZ^n$-module $\kk[\bb + \ZZ\tau + \NN^n]$ is
an upset module for the poset~$\ZZ^n$.  Pulling back to any poset
under a poset map to~$\ZZ^n$ therefore yields an upset module for the
given poset.  The dual statement holds for any indecomposable
injective module $\kk[\bb + \ZZ\tau - \NN^n]$: its pullback is a
downset module.
\end{example}

Pullbacks have particularly transparent monomial matrix
interpretations.

\begin{prop}\label{p:pullback-monomial-matrix}
Fix a poset~$\cQ$ and an encoding of a $\cQ$-module~$\cM$ via a poset
morphism $\pi: \cQ \to \cP$ and $\cP$-module~$\cH$.  Any monomial
matrix for a fringe presentation of~$\cH$ pulls back to a monomial
matrix for a fringe presentation that dominates the encoding by
replacing the row labels $U_1,\dots,U_k$ and column labels
$D_1,\dots,D_\ell$ with their preimages, namely $\pi^{-1}(U_1), \dots,
\pi^{-1}(U_k)$ and $\pi^{-1}(D_1), \dots, \pi^{-1}(D_\ell)$.
\hfill$\square$
\end{prop}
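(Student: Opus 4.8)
The plan is to push a fringe presentation of $\cH$ forward along the pullback functor $\pi^*$, using only that $\pi^*$ is exact, additive, commutes with direct sums, and carries indicator modules to indicator modules. First I would record three elementary facts. (i) Since $\pi$ is order preserving, the preimage $\pi^{-1}(U)$ of an upset $U \subseteq \cP$ is an upset in $\cQ$ and the preimage $\pi^{-1}(D)$ of a downset $D \subseteq \cP$ is a downset in $\cQ$ (this is the trivial half of Lemma~\ref{l:constant}). (ii) For any subset $S \subseteq \cP$ one has $\pi^*\kk[S] = \kk[\pi^{-1}(S)]$, because $(\pi^*\kk[S])_q = \kk[S]_{\pi(q)}$ equals $\kk$ exactly when $q \in \pi^{-1}(S)$, with all structure maps identities; combined with additivity this gives $\pi^*F = \bigoplus_p \kk[\pi^{-1}(U_p)]$ and $\pi^*E = \bigoplus_q \kk[\pi^{-1}(D_q)]$. (iii) $\pi^*$ is exact and preserves images: for any morphism $\psi$ of $\cP$-modules, $\image(\pi^*\psi) = \pi^*(\image(\psi))$ as submodules of the pullback of the target of $\psi$, since in every degree $q$ both sides equal the image of $\psi$ in degree $\pi(q)$.

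With these in hand, let $\phi\colon F = \bigoplus_p \kk[U_p] \to E = \bigoplus_q \kk[D_q]$ be a finite fringe presentation of $\cH$ carried by a monomial matrix with scalar entries $\phi_{pq}$. Applying $\pi^*$ yields $\pi^*\phi\colon \bigoplus_p \kk[\pi^{-1}(U_p)] \to \bigoplus_q \kk[\pi^{-1}(D_q)]$, and by exactness and functoriality $\image(\pi^*\phi) = \pi^*(\image(\phi)) \cong \pi^*\cH \cong \cM$. The $(p,q)$-component of $\pi^*\phi$ is $\pi^*$ of the connected homomorphism $\kk[U_p] \to \kk[D_q]$ determined by $\phi_{pq}$; since that homomorphism acts as multiplication by $\phi_{pq}$ on every graded piece indexed by $U_p \cap D_q$, its pullback acts as multiplication by the same scalar $\phi_{pq}$ on every graded piece indexed by $\pi^{-1}(U_p \cap D_q) = \pi^{-1}(U_p) \cap \pi^{-1}(D_q)$, hence is again connected (Definition~\ref{d:connected-homomorphism}). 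Thus $\pi^*\phi$ is a fringe presentation of $\cM$, and the original array $(\phi_{pq})$ with row and column labels replaced by $\pi^{-1}(U_p)$ and $\pi^{-1}(D_q)$ is a monomial matrix for it.

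It remains to check that $\pi^*\phi$ dominates the encoding, i.e.\ that the partition of $\cQ$ into the fibers of $\pi$ is a constant subdivision subordinate to $\cM$ and to every summand $\kk[\pi^{-1}(U_p)]$ of $\pi^*F$ and $\kk[\pi^{-1}(D_q)]$ of $\pi^*E$. Each of these modules is a pullback along $\pi$ of a $\cP$-module, so on a fiber $\pi^{-1}(p)$ it is constant --- equal to $H_p$, or to $\kk$ or $0$ in the indicator cases --- with identity structure maps, and it has no monodromy because for $\ii \preceq \jj$ the composite $M_I \to M_\ii \to M_\jj \to M_J$ of Definition~\ref{d:constant-subdivision} is exactly the structure map of the underlying $\cP$-module from $\pi(\ii)$ to $\pi(\jj)$, which depends only on the fibers containing $\ii$ and $\jj$. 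There is essentially no obstacle here: the proposition is a bookkeeping statement, and the only place that calls for a moment's care is the claim in the second step that pullback carries a connected component homomorphism to a connected one with the same scalar --- which is immediate once one observes that pulling an indicator module back along $\pi$ produces a module literally constant on each fiber of $\pi$.
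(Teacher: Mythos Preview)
Your proof is correct. The paper does not give a proof of this proposition at all---it simply ends the statement with a $\square$, treating the claim as immediate from the definitions---so your write-up is a faithful and careful unpacking of exactly the verification the paper leaves to the reader.
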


%%%%%%%%%%%%%%%%%%%%%%%%%%%%%%%%%%%%%%%%%%%%%%%%%%%%%%%%%%%%%%%%%%%%%%%%%
\subsection{Uptight posets}\label{sub:uptight}\mbox{}%%%%%%%%%%%%%%%%%%%%

\noindent
Constructing encodings from constant subdivisions uses general poset
combinatorics.

\begin{defn}\label{d:uptight-region}
Fix a poset~$\cQ$ and a set $\Upsilon$ of upsets.  For each poset
element $\aa \in \cQ$, let $\Upsilon_\aa \subseteq \Upsilon$ be the
set of upsets from~$\Upsilon$ that contain~$\aa$.  Two poset elements
$\aa,\bb \in \cQ$ lie in the same \emph{uptight region} if
$\Upsilon_\aa = \Upsilon_\bb$.
\end{defn}

\begin{remark}\label{r:common-refinement}
The partition of $\cQ$ into uptight regions in
Definition~\ref{d:uptight-region} is the common refinement of the
partitions $\cQ = U \cupdot (\cQ \minus U)$ for $U \in \Upsilon$.
\end{remark}

\begin{remark}\label{r:iso-uptight}
Every uptight region is the intersection of a single upset (not
necessarily one of the ones in~$\Upsilon$) with a single downset.
Indeed, the intersection of any family of upsets is an upset, the
complement of an upset is a downset, and the intersection of any
family of downsets is a downset.  Hence the uptight region
containing~$\aa$ equals $\bigl(\bigcap_{U \in \Upsilon_\aa} U\bigr)
\cap \bigl(\bigcap_{U \not\in \Upsilon_\aa} \ol U\bigr)$, the first
intersection being an upset and the second a~downset.
\end{remark}

\begin{prop}\label{p:posetQuotient}
In the situation of Definition~\ref{d:uptight-region}, the relation on
uptight regions given by $A \preceq B$ whenever $\aa \preceq \bb$ for
some $\aa \in A$ and $\bb \in B$ is reflexive and acyclic.
\end{prop}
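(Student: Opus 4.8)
The plan is to attach to each uptight region a monotone, injective invariant valued in the Boolean lattice of subsets of~$\Upsilon$, and then observe that any directed cycle of regions forces a cyclic chain of subset inclusions, which collapses to equalities. Reflexivity is immediate: for $\aa \in A$ we have $\aa \preceq \aa$, so $A \preceq A$. For the substance, write $\Upsilon_A := \Upsilon_\aa$ for any $\aa \in A$; this is well defined by Definition~\ref{d:uptight-region}, and by that same definition the assignment $A \mapsto \Upsilon_A$ is injective on the set of uptight regions (equivalently, one could use the description of $A$ as $U_A \cap D_A$ in Remark~\ref{r:iso-uptight}).

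The key step is the implication $A \preceq B \implies \Upsilon_A \subseteq \Upsilon_B$. To see it, choose $\aa \in A$ and $\bb \in B$ with $\aa \preceq \bb$, as furnished by the definition of~$\preceq$ on uptight regions; then any $U \in \Upsilon_A = \Upsilon_\aa$ is an upset containing~$\aa$, hence contains~$\bb$ since~$\aa \preceq \bb$, so $U \in \Upsilon_\bb = \Upsilon_B$. With this in hand, acyclicity is formal: given a directed cycle $A_0 \preceq A_1 \preceq \cdots \preceq A_k = A_0$, the key step yields $\Upsilon_{A_0} \subseteq \Upsilon_{A_1} \subseteq \cdots \subseteq \Upsilon_{A_k} = \Upsilon_{A_0}$, so all these subsets of~$\Upsilon$ coincide, and injectivity of $A \mapsto \Upsilon_A$ forces $A_0 = A_1 = \cdots = A_k$. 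Thus the only directed cycles are trivial, i.e.\ the relation is acyclic, and its transitive closure is an honest partial order on uptight regions.

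The only nontrivial point — and hence the "main obstacle" — is isolating the monotone invariant $\Upsilon_A$ and verifying $A \preceq B \implies \Upsilon_A \subseteq \Upsilon_B$; everything else is bookkeeping, since a cyclic chain of inclusions in the poset~$2^{\Upsilon}$ must consist of equalities. Note that transitivity of~$\preceq$ itself is neither needed nor claimed, and one should flag the intended reading of "acyclic" (no sequence of distinct regions returns to its start), which is exactly what is required downstream to turn the set of uptight regions into the encoding poset.
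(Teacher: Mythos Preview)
Your proof is correct and follows essentially the same approach as the paper: both arguments establish reflexivity trivially via $\aa \preceq \aa$, and both derive acyclicity from the observation that $\aa \preceq \bb$ forces $\Upsilon_\aa \subseteq \Upsilon_\bb$, so any directed cycle among uptight regions yields a constant sequence of sets~$\Upsilon_\aa$ and hence a single region. Your version simply spells out more explicitly the injectivity of $A \mapsto \Upsilon_A$ and the collapse of the inclusion chain, whereas the paper compresses this into a single sentence.
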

\begin{proof}
The stipulated relation on the set of uptight regions is
\begin{itemize}
\item%
reflexive because $\aa \preceq \aa$ for any element~$\aa$ in any
uptight region~$A$; and

\item%
acyclic because going up from $\aa \in \cQ$ causes the set
$\Upsilon_\aa$ in Definition~\ref{d:uptight-region} to (weakly)
increase, so a directed cycle can only occur with a constant sequence
of sets~$\Upsilon_\aa$.\qedhere
\end{itemize}
\end{proof}

\begin{example}\label{e:puuska-nontransitive}
The relation in Proposition~\ref{p:posetQuotient} makes the set of
uptight regions into a directed acyclic graph, but the relation need
not be transitive.  An example in the poset $\cQ = \NN^2$, kindly
provided by Ville Puuska \cite{puuska18}, is as follows.
Notationally, it is easier to work with monomial ideals in $\kk[x,y] =
\kk[\NN^2]$, which correspond to upsets (see
Example~\ref{e:disconnected-homomorphism}).  Let $\Upsilon =
\{U_1,\dots,U_4\}$ consist of the upsets with indicator modules
$$%
  \kk[U_1] = \<x^2,y\>,\quad
  \kk[U_2] = \<x^3,y\>,\quad
  \kk[U_3] = \<xy\>,\quad
  \kk[U_4] = \<x^2y\>.
$$
Identifying each monomial $x^ay^b$ with the corresponding pair $(a,b)
\in \NN^2$, it follows that $\Upsilon_{\!x^2} = \{U_1\}$, and
$\Upsilon_{\!x^3} = \Upsilon_{\!y} = \{U_1,U_2\}$, and
$\Upsilon_{\!xy} = \{U_1,U_2,U_3\}$ represent three distinct uptight
regions; call them $A$, $B$, and $C$.  They satisfy $A \prec B \prec
C$ because $x^2 \prec x^3$ and $y \prec xy$.  However, $A \not \preceq
C$ because $A = \{x^2\}$ while $U_4$ forces $C = xy\kk[y]$ to consist
of all lattice points in a vertical ray starting at $xy$.
\end{example}

\begin{defn}\label{d:uptight-poset}
In the situation of Definition~\ref{d:uptight-region}, the
\emph{uptight poset} is the transitive closure $\cP_\Upsilon$ of the
directed acyclic graph of uptight regions in
Proposition~\ref{p:posetQuotient}.
\end{defn}

%%%%%%%%%%%%%%%%%%%%%%%%%%%%%%%%%%%%%%%%%%%%%%%%%%%%%%%%%%%%%%%%%%%%%%%%%
\subsection{Constant upsets}\label{sub:upsets}%%%%%%%%%%%%%%%%%%%%%%%%%%%

\begin{defn}\label{d:uptight-constant}
Fix a constant subdivision of~$\cQ$ subordinate to~$\cM$.  A
\emph{constant upset} of~$\cQ$ is either
\begin{enumerate}
\item%
an upset $U_I$ generated by a constant region~$I$ or
\item%
the complement of a downset $D_I$ cogenerated by a constant
region~$I$.
\end{enumerate}
\end{defn}

\begin{thm}\label{t:constant-uptight}
Let $\Upsilon$ be the set of constant upsets from a~constant
subdivision of~$\cQ$ subordinate to~$\cM$.  The partition of~$\cQ$
into uptight regions for $\Upsilon$ forms another constant subdivision
subordinate to~$\cM$.
\end{thm}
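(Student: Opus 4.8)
The plan is to produce, for every uptight region $R$, a vector space $M_R$ together with no‑monodromy isomorphisms onto the pieces $\cM_\ii$, $\ii\in R$, by transporting the data of the given constant subdivision through comparisons chosen with the help of the particular set $\Upsilon$ of constant upsets. First I would record the bookkeeping internal to the given subdivision. For a constant region $I$ let $\alpha_I^\ii\colon M_I\to\cM_\ii$ denote the chosen isomorphism; Definition~\ref{d:constant-subdivision} together with Lemma~\ref{l:no-monodromy} (applied with $J=I$) forces $\alpha_I^\jj$ to equal the composite $M_I\to\cM_\ii\to\cM_\jj$ whenever $\ii\preceq\jj$ in $I$, so the $\alpha_I^\ii$ intertwine the structure maps of $\cM$. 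If constant regions $I$ and $J$ admit a comparable pair $\ii\preceq\jj$ with $\ii\in I$ and $\jj\in J$, then the no‑monodromy hypothesis makes the composite $M_I\to\cM_\ii\to\cM_\jj\to M_J$ independent of that pair; write $\phi_{IJ}\colon M_I\to M_J$ for it, with $\phi_{II}=\id$.

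Next I would extract the combinatorial input hidden in the choice $\Upsilon=$ (the set of constant upsets, Definition~\ref{d:uptight-constant}). Because $U_I$ consists of the elements of $\cQ$ lying above some element of $I$ while $\cQ\minus D_I$ consists of the elements lying above no element of $I$, unwinding Definition~\ref{d:uptight-region} shows that $\aa$ and $\bb$ lie in the same uptight region if and only if $\{q\preceq\aa\}$ meets exactly the same constant regions as $\{q\preceq\bb\}$, and $\{q\succeq\aa\}$ meets exactly the same constant regions as $\{q\succeq\bb\}$. The consequence to isolate is a \emph{sandwiching property}: if $\aa$ lies in the constant region $I$ and in the same uptight region as $\bb$, then $I$ meets both $\{q\preceq\bb\}$ and $\{q\succeq\bb\}$, so there are $p,q\in I$ with $p\preceq\bb\preceq q$.

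The heart of the argument is a coherence statement for the maps $\phi$: if $K,K',K''$ are constant regions and some $m\in K'$ admits $g\in K$ with $g\preceq m$ and $k\in K''$ with $m\preceq k$, then $\phi_{KK'}$, $\phi_{K'K''}$, $\phi_{KK''}$ are defined and $\phi_{K'K''}\circ\phi_{KK'}=\phi_{KK''}$. The proof is immediate: realize $\phi_{KK'}$ through the pair $(g,m)$ and $\phi_{K'K''}$ through $(m,k)$; the two copies of $\alpha_{K'}^m$ cancel, the structure maps compose (the composite $\cM_g\to\cM_m\to\cM_k$ is $\cM_g\to\cM_k$), and what remains is $\phi_{KK''}$ realized through $(g,k)$. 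Combined with the sandwiching property this gives everything needed; in particular, if $K$ and $K'$ both meet an uptight region then, sandwiching a point of $K'$ in that region by $K$ (and symmetrically), the maps $\phi_{KK'}$ and $\phi_{K'K}$ are mutually inverse isomorphisms. Now for each uptight region $R$ fix a base point $\aa_0\in R$, lying in a constant region $I_0$, put $M_R:=M_{I_0}$, and for $\ii\in R$ in the constant region $I$ set $\beta_R^\ii:=\alpha_I^\ii\circ\phi_{I_0I}\colon M_R\to\cM_\ii$. This is legitimate: since $I_0$ and $I$ both meet $R$, sandwiching produces a comparable pair realizing $\phi_{I_0I}$ (so it is defined), no‑monodromy of the given subdivision makes it independent of choices, and it is an isomorphism because $\phi_{I_0I}$ is.

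It then remains to check the two no‑monodromy conditions of Definition~\ref{d:constant-subdivision} for the uptight partition. Take $\ii\preceq\jj$; say $\ii$ lies in the uptight region $R$ and the constant region $I$, and $\jj$ lies in the uptight region $R'$ and the constant region $J$, and let $\aa_0\in I_0$ and $\bb_0\in J_0$ be the base points chosen for $R$ and $R'$. Expanding $\beta_R^\ii$ and $\beta_{R'}^\jj$, the composite $M_R\to\cM_\ii\to\cM_\jj\to M_{R'}$ (of $\beta_R^\ii$, the structure map, and the inverse of $\beta_{R'}^\jj$) equals $\phi_{JJ_0}\circ\phi_{IJ}\circ\phi_{I_0I}$, the middle factor $\phi_{IJ}$ being realized by the pair $(\ii,\jj)$. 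Two applications of the coherence statement collapse this to $\phi_{I_0J_0}$: first with $(K,K',K'')=(I_0,I,J)$ and $m=\ii$, using a point of $I_0$ below $\ii$ (hence below $\jj$) supplied by sandwiching, then with $(K,K',K'')=(I_0,J,J_0)$ and $m=\jj$, using that same point of $I_0$ together with a point of $J_0$ above $\jj$, again from sandwiching. Hence the composite depends only on $R$ and $R'$: for $R=R'$ it is $\phi_{I_0I_0}=\id_{M_R}$, which is the internal no‑monodromy, and for $R\neq R'$ it is the asserted independence of the chosen comparable pair. The one genuinely delicate point is the coherence statement; once the sandwiching property has been read off from the description of $\Upsilon$ as the set of constant upsets, everything else is a matter of composing structure maps along shared routes.
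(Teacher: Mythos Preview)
Your proof is correct and follows essentially the same strategy as the paper's: both extract the sandwiching property from the particular shape of~$\Upsilon$, use it to build isomorphisms between the vector spaces $M_I$ attached to constant regions meeting a common uptight region, and then verify the no-monodromy condition by transporting along chains of comparisons. The main difference is organizational: you isolate the coherence statement $\phi_{K'K''}\circ\phi_{KK'}=\phi_{KK''}$ as an explicit lemma and carry the bookkeeping through the named maps $\alpha$, $\phi$, $\beta$, whereas the paper works directly with a single chain $M_I\to\cM_\ii\to\cM_\jj\to\cM_{\ii'}\to\cM_{\jj'}\to M_J$ and appeals to ``elementary homological algebra'' to conclude that the intermediate structure map $\cM_\ii\to\cM_\jj$ is an isomorphism.
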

\begin{proof}
Suppose that $A$ is an uptight region that contains points from
constant regions $I$ and~$J$.  Any point in $I \cap A$ witnesses the
containments $A \subseteq D_I$ and $A \subseteq U_I$ of~$A$ inside the
constant upset and downset generated and cogenerated by~$I$.  Any
point $\jj \in J \cap A$ is therefore sandwiched between elements
$\ii, \ii' \in I$, so $\ii \preceq \jj \preceq \ii'$, because $\jj \in
U_I$ (for~$\ii$) and $\jj \in D_I$ (for~$\ii'$).  By symmetry,
switching $I$ and~$J$, there exists $\jj' \in J$ with $\ii' \preceq
\jj'$.  The sequence
$$%
  M_I \to \cM_\ii \to \cM_\jj \to \cM_{\ii'} \to \cM_{\jj'} \to M_J,
$$
where the first and last isomorphisms come from
Definition~\ref{d:constant-subdivision} and the homomorphisms in
between are $\cQ$-module structure homomorphisms, induces isomorphisms
$\cM_\ii \to \cM_{\ii'}$ and $\cM_\jj \to \cM_{\jj'}$ by definition of
constant region.  Elementary homological algebra implies that $\cM_\ii
\to \cM_\jj$ is an isomorphism.  The induced isomorphism $M_I \to M_J$
is independent of the choices of $\ii$, $\jj$, $\ii'$, and~$\jj'$ (in
fact, merely considering independence of the choices of~$\ii$
and~$\jj'$ would suffice) because constant subdivisions
have~no~monodromy.

The previous paragraph need not imply that $I = J$, but it does imply
that all of the vector spaces $M_J$ for constant regions~$J$ that
intersect~$A$ are---viewing the data of the original constant
subdivision as given---canonically isomorphic to~$M_I$, thereby
allowing the choice of $M_A = M_I$.  This, plus the lack of monodromy
in constant subdivisions, ensures that $M_A \to M_\aa \to M_\bb \to
M_B$ is independent of the choices of $\aa \in A$ and $\bb \in B$ with
$\aa \preceq \bb$.  Thus the uptight subdivision is also constant
subordinate~to~$\cM$.
\end{proof}

\begin{example}\label{e:iso-uptight}
Theorem~\ref{t:constant-uptight} does not claim that $I = U_I \cap
D_I$, and in fact that claim is often not true, even if the isotypic
subdivision (Example~\ref{e:puuska-nonconstant-isotypic}) is already
constant.  Consider $\cQ = \RR^2$ and $\cM = \kk_\0 \oplus
\kk[\RR^2]$, as in Example~\ref{e:subdivide}, and take $I = \RR^2
\minus \{\0\}$.  Then $U_I = D_I = \RR^2$, so $U_I \cap D_I$ contains
the other isotypic region $J = \{\0\}$.  The uptight poset $\cP_\cM$
has precisely four elements:
\begin{enumerate}
\item%
the origin $\{\0\} = U_J \cap D_J$;
\item%
the complement $U_J \minus \{\0\}$ of the origin in $U_J$;
\item%
the complement $D_J \minus \{\0\}$ of the origin in $D_J$; and
\item%
the points $\RR^2 \minus (U_J \cup D_J)$ lying only in~$I$ and in
neither $U_J$ nor~$D_J$.
\end{enumerate}
Oddly, uptight region~4 has two connected components, the second and
fourth quadrants $A$ and~$B$, that are incomparable: any chain of
relations from Definition~\ref{d:constant-subdivision} that realizes
the equivalence $\aa \sim \bb$ for $\aa \in A$ and~$\bb \in B$ must
pass through the positive quadrant or the negative quadrant, each of
which accidentally becomes comparable to the other isotypic region~$J$
and hence lies in a different uptight region.
\end{example}

%%%%%%%%%%%%%%%%%%%%%%%%%%%%%%%%%%%%%%%%%%%%%%%%%%%%%%%%%%%%%%%%%%%%%%%%%
\subsection{Finite encoding from constant subdivisions}\label{sub:existence}

\begin{defn}\label{d:compactly-supported}
If $\cQ$ is a subposet of a partially ordered real vector space, then
a $\cQ$-module~$\cM$ has \emph{compact support} if~$\cM$ has nonzero
components~$\cM_q$ only in a bounded set of degrees $q \in \cQ$.  A
constant subdivision subordinate to such a module is \emph{compact} if
it has exactly one unbounded constant region (namely those $q \in \cQ$
for which $\cM_q = 0$).
\end{defn}

\begin{thm}\label{t:tame}
Fix a $\cQ$-finite module~$\cM$ over a poset~$\cQ$.
\begin{enumerate}
\item\label{i:admits-finite-encoding}%
$\cM$ admits a finite encoding if and only if there exists a finite
constant subdivision of~$\cQ$ subordinate to~$\cM$.  More precisely,

\item\label{i:uptight-encoding}%
the uptight poset of the set of constant upsets from any constant
subdivision yields an \emph{uptight encoding} of~$\cM$ that is finite
if the constant subdivision is finite.

\item\label{i:auxiliary-uptight}%
If $\cQ$ is a subposet of a partially ordered real vector space and
the constant subdivision in the previous item is
\begin{itemize}
\item%
semialgebraic, with $\cQ_{+\!}$ also semialgebraic; or

\item%
PL, with $\cQ_{+\!}$ also polyhedral; or

\item%
compact and subanalytic, with $\cQ_{+\!}$ also subanalytic; or

\item%
of class~$\mathfrak X$,
\end{itemize}
then the relevant uptight encoding is semialgebraic, PL, subanalytic,
or class~$\mathfrak X$.
\end{enumerate}
\end{thm}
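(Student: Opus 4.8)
The statement comprises three assertions, which I would prove in the order: the ``only if'' half of the equivalence in part~\ref{i:admits-finite-encoding}, then the sharper part~\ref{i:uptight-encoding} (which also supplies the ``if'' half), and finally the auxiliary‑hypothesis refinement in part~\ref{i:auxiliary-uptight}. The ``only if'' half is immediate: given a finite encoding $\pi\colon\cQ\to\cP$ with $\cP$-module~$\cH$, the fibers $\pi^{-1}(p)$, $p\in\cP$, partition~$\cQ$ into finitely many subsets, and this partition is a constant subdivision subordinate to~$\cM$---set $M_{\pi^{-1}(p)}=\cH_p$, with the identity as the isomorphism onto $\cM_\aa=(\pi^*\cH)_\aa=\cH_p$ for $\aa\in\pi^{-1}(p)$. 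There is no monodromy because, for $\aa\preceq\bb$ with $\aa\in\pi^{-1}(p)$ and $\bb\in\pi^{-1}(p')$, the composite $\cH_p\to\cM_\aa\to\cM_\bb\to\cH_{p'}$ is exactly the $\cP$-module structure map $\cH_p\to\cH_{p'}$, which depends only on~$p$ and~$p'$. So $\cM$ admits a finite constant subdivision.

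For part~\ref{i:uptight-encoding} (hence the ``if'' half of part~\ref{i:admits-finite-encoding}), begin with a constant subdivision of~$\cQ$ subordinate to~$\cM$ and let $\Upsilon$ be its set of constant upsets (Definition~\ref{d:uptight-constant}); since $\Upsilon$ contains at most two upsets per constant region, it is finite when the subdivision is. By Theorem~\ref{t:constant-uptight} the uptight regions of~$\Upsilon$ form another constant subdivision subordinate to~$\cM$, with finitely many regions when~$\Upsilon$ is finite. Form the uptight poset~$\cP_\Upsilon$ (Definition~\ref{d:uptight-poset})---a genuine poset, as the transitive closure of the reflexive acyclic relation of Proposition~\ref{p:posetQuotient}---together with the morphism $\pi\colon\cQ\to\cP_\Upsilon$ sending each element to its uptight region; this $\pi$ is order-preserving since $\aa\preceq\bb$ already forces the uptight region of~$\aa$ to precede that of~$\bb$ in the underlying directed graph. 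Now build a $\cP_\Upsilon$-module~$\cH$ by putting $\cH_p=M_p$, the vector space attached to~$p$ in Theorem~\ref{t:constant-uptight}, and, for $p\preceq p'$ in~$\cP_\Upsilon$, defining $\cH_p\to\cH_{p'}$ as the composite along a chain of directed-graph relations $p=p_0\preceq p_1\preceq\dots\preceq p_m=p'$ of the maps $M_{p_{i-1}}\to\cM_\aa\to\cM_\bb\to M_{p_i}$ coming, through the structure homomorphisms of~$\cM$ and the constant-subdivision isomorphisms, from witnesses $\aa\preceq\bb$ of the successive relations. Granting that $\cH$ is well defined, one has $\pi^*\cH\cong\cM$ degreewise and compatibly with structure maps; and $\cP_\Upsilon$ is finite while $\cH_p=M_p\cong\cM_q$ is finite-dimensional by $\cQ$-finiteness, so this \emph{uptight encoding} is finite.

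For part~\ref{i:auxiliary-uptight}, suppose $\cQ$ sits in a real partially ordered vector space and the constant subdivision of part~\ref{i:uptight-encoding} is semialgebraic with $\cQ_+$ semialgebraic, or PL with $\cQ_+$ polyhedral, or compact and subanalytic with $\cQ_+$ subanalytic, or of class~$\mathfrak X$. The fibers of~$\pi$ are the uptight regions, and by Remark~\ref{r:common-refinement} (cf.~Remark~\ref{r:iso-uptight}) each is a finite Boolean combination---an intersection of some members of~$\Upsilon$ with the complements of the others---of constant upsets. A constant upset is either the upset $I+\cQ_+$ generated by a constant region~$I$ or the complement of the downset $I-\cQ_+=-\bigl((-I)+\cQ_+\bigr)$ it cogenerates; since the pertinent family is closed under negation, under complement, and---by Proposition~\ref{p:auxiliary-hypotheses}, using the stated hypothesis on~$\cQ_+$---under Minkowski sum with~$\cQ_+$, every constant upset lies in the family, and hence so does every uptight region, the family being closed under complement and finite intersection. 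Thus by Definition~\ref{d:subordinate-encoding}.\ref{i:auxiliary-encoding} the uptight encoding is of the claimed type. The subanalytic case is the one where compactness enters: Proposition~\ref{p:auxiliary-hypotheses}.\ref{i:sum-subanalytic} requires a bounded summand, and compactness bounds every constant region but the single unbounded one, whose constant upsets differ from~$\cQ$ and from~$\nothing$ only inside the bounded support of~$\cM$ and hence remain subanalytic; this is the one place boundedness must be tracked, and a little care is needed over the precise shape of that exceptional region.

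The main obstacle is the well-definedness of~$\cH$ in part~\ref{i:uptight-encoding}: a relation $p\preceq p'$ in~$\cP_\Upsilon$ is realized by a chain of directed-graph relations, but---as Example~\ref{e:puuska-nontransitive} shows---such a chain need not lift to a chain $\aa\preceq\bb\preceq\dots$ inside~$\cQ$, so the composite cannot simply be read off from a single comparable pair by invoking commutativity of~$\cM$. One must check instead that the composite is independent of the chosen chain and that the resulting assignment obeys the composition axiom for a $\cP_\Upsilon$-module. This is exactly where the no-monodromy clause of Definition~\ref{d:constant-subdivision}, propagated to the uptight subdivision by Theorem~\ref{t:constant-uptight}, is essential---for instance by inducting on chain length and reducing to commutativity of the maps around a ``square'' of uptight regions---and it is the only genuinely delicate step.
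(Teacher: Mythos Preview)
Your proposal is correct and follows essentially the same approach as the paper: fibers of an encoding give the constant subdivision for the easy direction, the uptight poset built from constant upsets (via Theorem~\ref{t:constant-uptight}) yields the encoding for the hard direction, and Proposition~\ref{p:auxiliary-hypotheses} handles the auxiliary hypotheses.  Two minor differences are worth noting.  You are more explicit than the paper about the well-definedness of~$\cH$ as a $\cP_\Upsilon$-module---the paper simply asserts it is ``well defined by Theorem~\ref{t:constant-uptight}'' without discussing how structure maps along transitive-closure-only relations in~$\cP_\Upsilon$ are consistently defined, whereas you flag this as the delicate step and sketch a strategy.  Conversely, the paper is crisper on the compact subanalytic case: for the unique unbounded constant region~$I$ (the complement of the bounded support) it observes directly that $I + \cQ_+ = I - \cQ_+ = \cQ$, so the associated constant upsets are simply~$\cQ$ and~$\nothing$, which removes the need for the hedging in your final sentence.
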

\begin{proof}
One direction of item~\ref{i:admits-finite-encoding} is easy: a finite
encoding induces a constant subdivision almost by definition.  Indeed,
if $\pi: \cQ \to \cP$ is a poset encoding of~$\cM$ by a
$\cP$-module~$\cH$, then each fiber~$I$ of~$\pi$ is a constant region
with $M_I = H_{\pi(I)}$.  If $\ii \preceq \jj$ with $\ii \in I$ and
$\jj \in J$, then the composite homomorphism $M_I \to M_\ii \to M_\jj
\to M_J$ is merely the structure morphism $H_{\pi(I)} \to H_{\pi(J)}$
of the $\cP$-module~$\cH$.

The hard direction is producing a finite encoding from a constant
subdivision.  For that, it suffices to prove
item~\ref{i:uptight-encoding}.  Let $\Upsilon$ be the set of constant
upsets from a~constant subdivision of~$\cQ$ subordinate to~$\cM$.
Consider the quotient map $\cQ \to \cP_\Upsilon$ of sets that sends
each element of~$\cQ$ to the uptight region containing it.
Proposition~\ref{p:posetQuotient} and Definition~\ref{d:uptight-poset}
imply that this map of sets is a morphism of posets.  By
Definition~\ref{d:constant-subdivision} the vector spaces $M_A$
indexed by the uptight regions $A \in \cP_\Upsilon$ constitute a
$\cP_\Upsilon$-module~$H$ that is well defined by
Theorem~\ref{t:constant-uptight}.  The pullback of~$H$ to~$\cQ$ is
isomorphic to~$\cM$ by construction.  The claim about finiteness
follows because the number of uptight regions is bounded above by
$2^{2r}$, where $r$ is the number of constant regions in the original
constant subdivision: every element of~$\cQ$ lies inside or outside of
each constant upset and inside or outside of each constant downset.

For claim~\ref{i:auxiliary-uptight}, every constant upset is a
Minkowski sum $I + \cQ_+$ or the complement of $I - \cQ_+ = - (-I +
\cQ_+)$ by Definition~\ref{d:uptight-constant}.  These are
semialgebraic, PL, subanalytic, or of class~$\mathfrak X$,
respectively, by Proposition~\ref{p:auxiliary-hypotheses} (or
Definition~\ref{d:auxiliary-hypotheses} for class~$\mathfrak X$).
Note that in the compact subanalytic case, the unique unbounded
constant region~$I$ afforded by Definition~\ref{d:compactly-supported}
has $I + \cQ_+ = I - \cQ_+ = \cQ$, which is subanalytic.
\end{proof}

\begin{example}\label{e:antidiagonal''}
For the ``antidiagonal'' $\RR^2$-module $\cM$ in
Examples~\ref{e:antidiagonal} and~\ref{e:antidiagonal'}, every point
on the line is a singleton isotypic region, but these uncountably many
isotypic regions can be gathered together: the finite encoding there
is the uptight poset for the two upsets that are the closed and open
half-spaces bounded below by the antidiagonal.
\end{example}

\begin{example}\label{e:subdivide'}
In any encoding of the ``diagonal strip'' $\RR^2$-module $\cM$ in
Example~\ref{e:subdivide}, the poset must be uncountable by
Theorem~\ref{t:tame}.
\end{example}

%%%%%%%%%%%%%%%%%%%%%%%%%%%%%%%%%%%%%%%%%%%%%%%%%%%%%%%%%%%%%%%%%%%%%%%%%
\subsection{The category of tame modules}\label{sub:cat}%%%%%%%%%%%%%%%%%

\begin{example}\label{e:ker}
The kernel of a homomorphism of tame modules need not be tame.  The
upset $U \subseteq \RR^2$ that is the closed half-space above the
antidiagonal line~$L$ given by $a + b = 1$ has interior $U^\circ$,
also an upset.  The quotient module $N = \kk[U]/\kk[U^\circ]$ is the
translate by one unit (up or to the right) of the antidiagonal module
in Examples~\ref{e:antidiagonal}, \ref{e:antidiagonal'},
and~\ref{e:antidiagonal''}.  Both $M = \kk[U] \oplus \kk[U]$ and $N$
are tame.  The surjection $\phi: M \onto N$ that acts in every degree
$\aa = \left[\twoline ab \right]$ along~$L$ by sending the basis
vectors of $M_\aa = \kk^2$ to $b$ and $-a$ in $N_\aa = \kk$ has
kernel~$K = \ker\phi$ that is the submodule of~$M$ with
\begin{itemize}
\item%
$\kk^2$ in every degree from~$U^\circ$, and
\item%
the line in~$\kk^2$ through $\left[\twoline 00 \right]$ and
$\left[\twoline ab \right]$ in every degree from~$L$.
\end{itemize}
$$%
\psfrag{x}{\tiny$x$}
\psfrag{y}{\tiny$y$}
0
\ \to\
\begin{array}{@{}c@{}}\includegraphics[height=30mm]{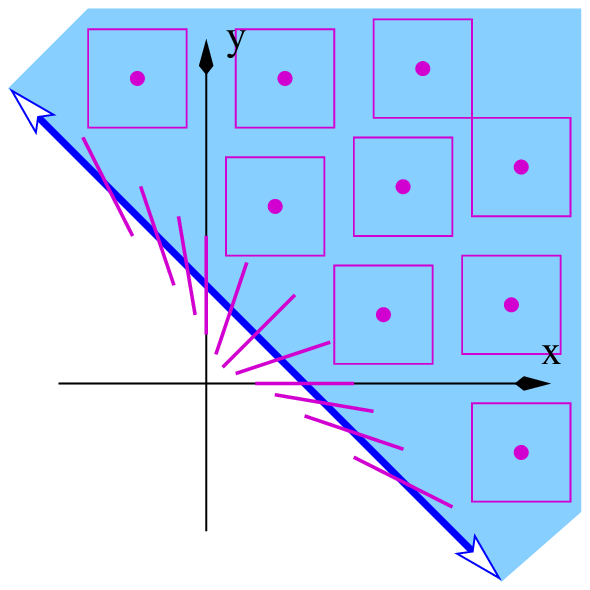}\end{array}
\ \too\
\begin{array}{@{}c@{}}\includegraphics[height=30mm]{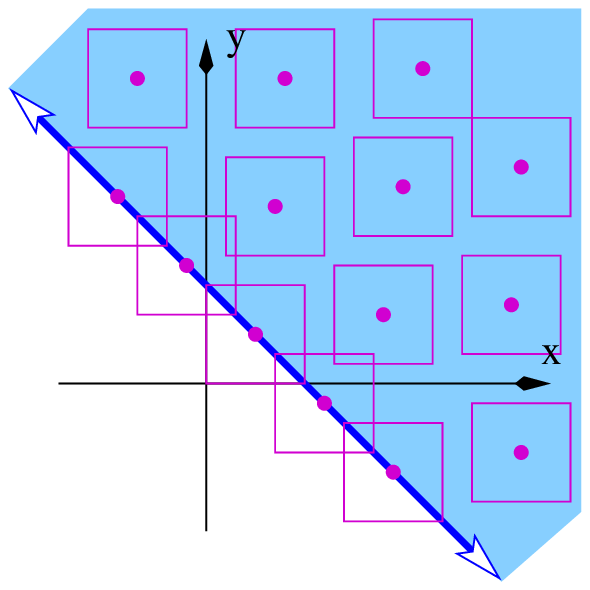}\end{array}
\ \too\
\begin{array}{@{}c@{}}\includegraphics[height=30mm]{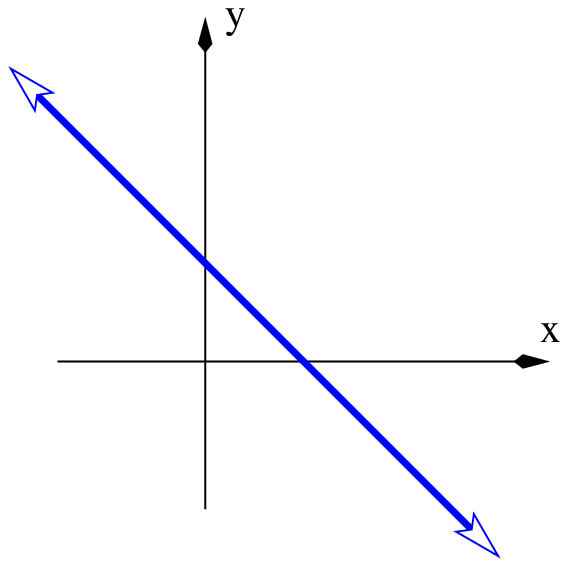}\end{array}
\ \to\
0
$$
That is, $K_\aa$ agrees with $M_\aa$ for degrees~$\aa$ outside of~$L$,
and $K_\aa$ is the line in~$M_\aa$ of slope $b/a$ through the origin
when $\aa$ lies on~$L$.  This kernel~$K$ is not tame.  Indeed, if
$\aa$ and~$\aa'$ are distinct points on~$L$, then the homomorphisms
$K_\aa \to K_{\aa\vee\aa'}$ and $K_{\aa'} \to K_{\aa\vee\aa'}$ have
different images, so $\aa$ and~$\aa'$ are forced to lie in different
constant regions in every constant subdivision of~$\RR^2$ subordinate
to~$K$.  (Note the relation between this example and
Proposition~\ref{p:U->D}.\ref{i:U->D} for $Q = U \subset \RR^2$ and $D
= L \subset Q$.)
\end{example}

\begin{remark}\label{r:lurie}
Encoding of a $\cQ$-module~$\cM$ by a poset morphism is related to
viewing~$\cM$ as a sheaf on~$\cQ$ with its Alexandrov topology that is
constructible in the sense of Lurie \cite[Definitions~A.5.1
and~A.5.2]{lurie2017}.
% on page 1064
% Note that\\
% %
% $f^{-1}(U)$ is an upset for all upsets $U \implies f$ is a poset morphism:\\
% if $b \!\geq\! a$ then $\!f(a) \!\in\! U_{f(a)} \implies a \!\in\!
% f^{-1}(U_{f(a)}) \implies b \!\in\! f^{-1}(U_{f(a)}) \implies \!f(b)
% \!\in\! U_{f(a)} \implies \!f(b) \!\geq\! f(a)$\\
% %
% $f$ is a poset morphism $\implies f^{-1}(U)$ is an upset for all upsets $U$:\\
% $a \in f^{-1}(U)$ and $b \geq a \implies f(b) \geq f(a) \in U \implies
% f(b) \in U$.
% %
% Thus a map of sets from one poset to another is continuous in the
% Alexandrov topologies if and only if it is a poset morphism.  This
% means that encoding by a morphism $Q \to P$ constitutes a
% $P$-stratification of~$Q$.
% %
% In the encoding of the antidiagonal module by the morphism from
% $\RR^2$ to the interval $\{-1 < 0 < 1\}$ of size~$3$, the fiber
% over~$0$ is the antiagonal itself.  The restriction of the kernel
% module to this fiber is locally constant, but in fact this fiber is
% totally disconnected---as is any antichain in any poset under the
% Alexandrov topology---so every sheaf on it is locally constant.
% N.B. the kernel is constant on the open set above the antidiagonal
The difference is that poset encoding requires constancy (in the sense
of Definition~\ref{d:constant-subdivision}) on fibers of the encoding
morphism, whereas Alexandrov constructibility requires only local
constancy in the sense of sheaf theory.
% The focus here is on the characterizations and consequences of the
% finitely encoded condition rather than on arbitrary poset encodings.
% In any case, Lurie is interested in "encoding" morphisms from
% arbitrary topological spaces to posets; the case where the source
% space is partially ordered and the topology on the source is the
% Alexandrov topology is not mentioned specifically by Lurie.
This distinction is decisive for Example~\ref{e:ker}, where the
kernel~$K$ is constructible but not finitely encoded.
\end{remark}

Because of Remark~\ref{r:lurie}, allowing arbitrary homomorphisms
between tame modules would step outside of the tame class.  More
formally, inside the category of $\cQ$-modules, the full subcategory
generated by the tame modules contains modules that are not tame.  To
preserve tameness, it is thus necessary to restrict the allowable
morphisms.

\begin{defn}\label{d:tame-morphism}
A homomorphism $\phi: M \to N$ of $\cQ$-modules is \emph{tame} if
$\cQ$ admits a finite constant subdivision subordinate to both $M$
and~$N$ such that for each constant region~$I$ the composite
isomorphism $M_I \to M_\ii \to N_\ii \to N_I$ does not depend~on~$\ii
\in I$.  The map $\phi$ is semialgebraic, PL, subanalytic, or
class~$\mathfrak X$ if this constant subdivision~is.
\end{defn}

\begin{lemma}\label{l:ker-coker}
The kernel and cokernel of any tame homomorphism of $\cQ$-modules are
tame morphisms of tame modules.  The same is true when tameness is
replaced by semialgebraic, PL, subanalytic, or class~$\mathfrak X$.
\end{lemma}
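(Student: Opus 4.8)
The plan is to show that one and the same finite constant subdivision simultaneously witnesses tameness of the kernel, the cokernel, and the canonical maps into/out of them. Unwinding Definition~\ref{d:tame-morphism}, a tame homomorphism $\phi\colon M\to N$ comes with a finite constant subdivision of~$\cQ$ subordinate to both $M$ and~$N$ such that, writing $\alpha_\ii\colon M_I\to M_\ii$ and $\beta_\ii\colon N_I\to N_\ii$ for the monodromy-free isomorphisms attached to a constant region~$I$, the composite $\phi_I:=\beta_\ii^{-1}\circ\phi_\ii\circ\alpha_\ii\colon M_I\to N_I$ is independent of~$\ii\in I$ (equivalently $\phi_\ii=\beta_\ii\circ\phi_I\circ\alpha_\ii^{-1}$). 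First I would record the relevant compatibility square: if $\ii\preceq\jj$ with $\ii\in I$ and $\jj\in J$, and $\mu_{IJ}\colon M_I\to M_J$, $\nu_{IJ}\colon N_I\to N_J$ denote the maps induced by the structure homomorphisms (well defined by the no-monodromy clause of Definition~\ref{d:constant-subdivision} applied to $M$ and to $N$), then $\nu_{IJ}\circ\phi_I=\phi_J\circ\mu_{IJ}$. This is a one-line chase using only that $\phi$ is a $\cQ$-module homomorphism, i.e.\ that $(N_\ii\to N_\jj)\circ\phi_\ii=\phi_\jj\circ(M_\ii\to M_\jj)$.

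Next I would put $K_I:=\ker\phi_I\subseteq M_I$ and $C_I:=\coker\phi_I=N_I/\image\phi_I$ for each constant region~$I$, where $K=\ker\phi$ and $C=\coker\phi$ carry the $\cQ$-module structures inherited from $M$ and~$N$. Since $\phi_\ii=\beta_\ii\circ\phi_I\circ\alpha_\ii^{-1}$, the isomorphism $\alpha_\ii$ restricts to an isomorphism $K_I\to\ker\phi_\ii=K_\ii$ and $\beta_\ii$ descends to an isomorphism $C_I\to\coker\phi_\ii=C_\ii$. These assignments have no monodromy: the composite $K_I\to K_\ii\to K_\jj\to K_J$ is the restriction to $K_I$ of the path-independent $M_I\to M_\ii\to M_\jj\to M_J=\mu_{IJ}$ (and $\mu_{IJ}$ carries $K_I$ into $K_J$ by the compatibility square), while $C_I\to C_\ii\to C_\jj\to C_J$ is the map induced on cokernels by the path-independent $\nu_{IJ}$ (which carries $\image\phi_I$ into $\image\phi_J$, again by the square). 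Hence the \emph{same} finite constant subdivision is subordinate to $K$ and to~$C$; as $K_q\subseteq M_q$ and $C_q$ is a quotient of~$N_q$, both are $\cQ$-finite, so $K$ and $C$ are tame by Definition~\ref{d:tame}.

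Finally, for the morphisms: with this same subdivision, the composite $K_I\to K_\ii\to M_\ii\to M_I$ is the restriction of $\alpha_\ii^{-1}\circ\alpha_\ii=\id$, hence the inclusion $K_I\hookrightarrow M_I$, independent of~$\ii$, so $K\hookrightarrow M$ is a tame homomorphism; dually the composite $N_I\to N_\ii\to C_\ii\to C_I$ is the canonical surjection $N_I\twoheadrightarrow N_I/\image\phi_I=C_I$, independent of~$\ii$, so $N\twoheadrightarrow C$ is tame. Thus $\ker\phi$ and $\coker\phi$ are tame morphisms of tame modules. The auxiliary statement costs nothing extra: the subdivision is never altered, so if the one furnished by the hypothesis that $\phi$ is semialgebraic, PL, subanalytic, or of class~$\mathfrak X$ is of that type, it exhibits $K$, $C$ and the maps $K\hookrightarrow M$, $N\twoheadrightarrow C$ as objects and morphisms of the same type (Definitions~\ref{d:auxiliary-hypotheses} and~\ref{d:tame-morphism}). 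The only step demanding care is the path-independence bookkeeping in the middle paragraph --- the ``elementary homological algebra'' also used in the proof of Theorem~\ref{t:constant-uptight} --- but there is no real obstacle here, precisely because keeping the original subdivision (rather than refining to an uptight poset) leaves every constant region, and the isomorphisms attached to it, unchanged.
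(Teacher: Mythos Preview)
Your proof is correct and follows essentially the same approach as the paper: take the finite constant subdivision furnished by Definition~\ref{d:tame-morphism} and assign to each constant region~$I$ the vector spaces $\ker(M_I\to N_I)$ and $\coker(M_I\to N_I)$. The paper compresses this into a single sentence, whereas you carefully verify the no-monodromy condition, $\cQ$-finiteness, and the tameness of the inclusion $K\hookrightarrow M$ and projection $N\twoheadrightarrow C$; these are exactly the details a reader would have to supply.
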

\begin{proof}
Any constant subdivision as in Definition~\ref{d:tame-morphism} is
subordinate to both the kernel and cokernel of $M \to N$, with the
vector spaces assocated to any constant region~$I$ being $\ker(M_I \to
N_I)$ and $\coker(M_I \to N_I)$.
\end{proof}

\begin{defn}\label{d:abelian-category}
The \emph{category of tame modules} is the subcategory of
$\cQ$-modules whose objects are the tame modules and whose morphisms
are the tame homomorphisms.
\end{defn}

\begin{remark}\label{r:morphism}
To be precise with language, a \emph{morphism} of tame modules is
required to be tame, whereas a \emph{homomorphism} of tame modules is
not.  That is, morphisms in the category of tame modules are called
morphisms, whereas morphisms in the category of $\cQ$-modules are
called homomorphisms.  To avoid confusion, the set of tame morphisms
from a tame module~$M$ to another tame module~$N$ is denoted
$\Mor(M,N)$ instead of $\Hom(M,N)$.
\end{remark}

\begin{prop}\label{p:abelian-category}
Over any poset~$\cQ$, the category of tame $\cQ$-modules is~abelian.
If~$\cQ$ is a subposet of a partially ordered real vector space of
finite dimension, then the category of semialgebraic, PL, subanalytic,
or class~$\mathfrak X$ modules is~abelian.
\end{prop}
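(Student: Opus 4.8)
The plan is to check directly that the subcategory of tame $\cQ$-modules satisfies a standard set of axioms for an abelian category: it has a zero object, admits binary biproducts, has all kernels and cokernels, and has the property that every monomorphism is a kernel and every epimorphism is a cokernel. Throughout one exploits the ambient abelian category of all $\cQ$-modules together with Lemma~\ref{l:ker-coker}, and the following elementary device: if $M$ and $N$ are tame, then by Lemma~\ref{l:subdiv} the common refinement of a finite constant subdivision subordinate to~$M$ and one subordinate to~$N$ is a finite constant subdivision subordinate to both; in the geometric settings this common refinement has the same type, since semialgebraic, PL, subanalytic, and class-$\mathfrak X$ partitions are closed under common refinement, that is, under finite intersection and complement (Proposition~\ref{p:auxiliary-hypotheses}.\ref{i:classes} and Definition~\ref{d:auxiliary-hypotheses}.\ref{i:classX}). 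The same device, applied to the subdivisions witnessing tameness of two morphisms in the sense of Definition~\ref{d:tame-morphism}, shows that sums, negatives, and composites of tame homomorphisms are tame: on each region of the common refinement the relevant composite isomorphism is the corresponding sum, negative, or composite of point-independent maps, hence still point-independent. So the category of tame modules is an additive category whose $\Mor$-groups are the subgroups of $\Hom_\cQ$ consisting of tame homomorphisms; the zero module is tame (with the one-region subdivision) and is a zero object, and for tame $M,N$ the $\cQ$-module $M\oplus N$ is $\cQ$-finite and tame, because a common finite constant subdivision subordinate to $M$ and $N$ is subordinate to $M\oplus N$, so with the usual inclusions and projections it is a biproduct; all of this stays within the relevant geometric class.

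Next one produces kernels and cokernels. Given a tame morphism $\phi\colon M\to N$, Lemma~\ref{l:ker-coker} says the kernel $K=\ker_\cQ\phi$ and cokernel $C=\coker_\cQ\phi$ formed in $\cQ$-modules are tame, with the inclusion $j\colon K\to M$ and the projection $p\colon N\to C$ tame morphisms, and of the appropriate geometric type when $\phi$ is. To see $(K,j)$ is a kernel of~$\phi$ in the tame category, one checks the mediating maps are tame: for a tame $\psi\colon L\to M$ with $\phi\psi=0$, choose a common finite constant subdivision with respect to which $M$, $N$, $K$, $\phi$, $\psi$, and $j$ are all tame; on each region~$I$ the map $j_I$ identifies $K_I$ with $\ker(\phi_I)\subseteq M_I$, so the corestriction of $\psi_I$ to $K_I$ is point-independent and the induced $\overline\psi\colon L\to K$ is tame. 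Uniqueness of $\overline\psi$ is inherited from $\cQ$-modules. The cokernel statement is dual.

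Finally one verifies normality of monomorphisms and epimorphisms. If a tame homomorphism $\phi\colon M\to N$ is not injective, then $\ker_\cQ\phi$ is a nonzero tame module whose tame inclusion $j$ satisfies $\phi j=0=\phi\cdot 0$ with $j\neq 0$, so $\phi$ is not a monomorphism in the tame category; conversely an injective tame homomorphism is a monomorphism there because it is already one in $\cQ$-modules. Thus the monomorphisms of the tame category are exactly the injective tame homomorphisms, and dually the epimorphisms are the surjective ones. Now let $\iota\colon M\to N$ be a monomorphism, form $p\colon N\to C=\coker\iota$ as above, and note that $\ker_\cQ p=\iota(M)$, which is tame by Lemma~\ref{l:ker-coker} applied to the tame morphism~$p$. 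On a common finite constant subdivision with respect to which $M$, $N$, $\iota(M)$, $C$, $\iota$, and $p$ are tame, the corestriction $M_I\to\iota(M)_I=\ker(p_I)$ of $\iota_I$ is a point-independent isomorphism, so $\iota\colon M\to\ker_\cQ p$ is an isomorphism in the tame category; composing with the kernel construction of the previous paragraph exhibits $\iota$ as a kernel of~$p$. Dually every epimorphism is a cokernel. This checks all the axioms, and each step stays inside the semialgebraic, PL, subanalytic, or class-$\mathfrak X$ subcategory by Lemma~\ref{l:ker-coker} and the closure properties above. The only point requiring care, and hence the main (mild) obstacle, is precisely this bookkeeping: confirming that kernels, cokernels, images, and mediating maps computed in the large category of $\cQ$-modules remain tame and of the given geometric type, which is exactly what Lemma~\ref{l:ker-coker} and the common-refinement device provide.
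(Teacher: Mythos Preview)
Your proof is correct and essentially follows the approach the paper itself sketches in its final sentence: working directly with common refinements of constant subdivisions rather than passing through finite encodings. The paper's primary argument instead invokes Theorem~\ref{t:tame} to replace a pair of tame morphisms encoded by $\pi\colon\cQ\to P$ and $\pi'\colon\cQ\to P'$ with a single encoding by $\pi\times\pi'\colon\cQ\to P\times P'$, and uses this for closure of $\Mor(M,N)$ under addition and for biproducts; it then remarks that the subanalytic case forces one to abandon this shortcut and argue via common refinements anyway. Your route is thus more elementary (it does not rely on Theorem~\ref{t:tame}) and more uniform across the tame, semialgebraic, PL, subanalytic, and class~$\mathfrak X$ cases. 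You are also more explicit than the paper in verifying that mediating morphisms into kernels and out of cokernels are tame, and in checking that monomorphisms and epimorphisms in the tame category coincide with injective and surjective tame homomorphisms; the paper asserts normality in one line by appeal to the ambient category.
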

\begin{proof}
Over any poset, the category in question is a subcategory of the
category of $\cQ$-modules, which is abelian.  The subcategory is not
full, but $\Mor(M,N)$ is an abelian subgroup of $\Hom(M,N)$; this is
most easily seen via Theorem~\ref{t:tame}, for if $\phi: M \to N$
and $\phi': M \to N'$ are finitely encoded by $\pi: \cQ \to P$ and
$\pi': \cQ \to P'$, respectively, then $\phi + \phi'$ is finitely
encoded by $\pi \times \pi': \cQ \to P \times P'$.  The same
construction, but with the source of~$\pi'$ being a new module~$M'$
instead of~$M$, shows that the ordinary product and direct sum of a
pair of finitely encoded modules serves as a product and coproduct in
the tame category.  Kernels and cokernels of morphisms in the tame
category exist by Lemma~\ref{l:ker-coker}, which also implies that
every monomorphism is a kernel (it is the kernel of its cokernel in
the category of $\cQ$-modules) and every epimorphism is a cokernel (it
is the cokernel of its kernel in the category of $\cQ$-modules).

The semialgebraic, PL, and class~$\mathfrak X$ cases have the same
proof, noting that $\pi \times \pi'$ has fibers of the desired type if
$\pi$ and~$\pi'$ both do.  The subanalytic case only follows from this
argument when restricted to the category of modules whose nonzero
graded pieces lie in a bounded subset of~$\cQ$ (the subset is allowed
to depend on the module).  However, the argument in the previous
paragraph can be done directly with common refinements of pairs of
constant subdivisions, so reducing to Theorem~\ref{t:tame} is not
necessary.
\end{proof}

%%%%%%%%%%%%%%%%%%%%%%%%%%%%%%%%%%%%%%%%%%%%%%%%%%%%%%%%%%%%%%%%%%%%%%%%%
\section{Finitely determined \texorpdfstring{$\ZZ^n$}{Zn}-modules}\label{s:ZZn}
%%%%%%%%%%%%%%%%%%%%%%%%%%%%%%%%%%%%%%%%%%%%%%%%%%%%%%%%%%%%%%%%%%%%%%%%%

Unless otherwise stated, this section is presented over the discrete
partially ordered group $\cQ = \ZZ^n$ with $\cQ_+ = \NN^n$.  It begins
by reviewing the structure of finitely determined $\ZZ^n$-modules
(Section~\ref{sub:def-finitely-determined}), including (minimal)
injective and flat resolutions (Sections~\ref{sub:inj}
and~\ref{sub:flat}), before getting to flange presentations
(Section~\ref{sub:flange}) and the syzygy theorem
(Section~\ref{sub:Zsyzygy}).  These latter two underlie the general
syzygy theorem (Section~\ref{sub:syzygy}), including existence of
fringe presentations.  They also serve as models for the concepts of
socle, cogenerator, and downset hull over real polyhedral groups,
covered in the sequel \cite{essential-real}, as well as dual notions
of top, generator, and upset~cover.

The main references for $\ZZ^n$-modules used here are
\cite{alexdual,cca}.  The development of homological theory for
injective and flat resolutions in the context of finitely determined
modules is functorially equivalent to that for finitely generated
modules, by \cite[Theorem~2.11]{alexdual}, but it is convenient to
have on hand statements in the finitely determined case.  Flange
presentation (Section~\ref{sub:flange}) and the characterization of
finitely determined modules in Proposition~\ref{p:determined} and
(hence) Theorem~\ref{t:finitely-determined} are apparently new.

%%%%%%%%%%%%%%%%%%%%%%%%%%%%%%%%%%%%%%%%%%%%%%%%%%%%%%%%%%%%%%%%%%%%%%%%%
\subsection{Definitions}\label{sub:def-finitely-determined}\mbox{}%%%%%%%

\noindent
The essence of the finiteness here is that all of the relevant
information about the relevant modules should be recoverable from what
happens in a bounded box in~$\ZZ^n$.

\begin{defn}\label{d:determined}
A $\ZZ^n$-finite module~$\cN$ is \emph{finitely determined} if for
each $i = 1,\dots,n$ the multiplication map $\cdot x_i: N_\bb \to
N_{\bb+\ee_i}$ is an isomorphism whenever $b_i$ lies outside of some
bounded interval.  For notation, $\kk[\NN^n] = \kk[\xx]$, where $\xx =
x_1,\dots,x_n$ is a sequence of variables, and $\ee_i$ is the standard
basis vector whose only nonzero entry is $1$ in slot~$i$.
\end{defn}

\begin{remark}\label{r:determined}
This notion of finitely determined is the same notion as in
Example~\ref{e:convex-projection}.  A module is finitely determined if
and only if, after perhaps translating its $\ZZ^n$-grading, it is
\emph{$\aa$-determined} for some $\aa \in \NN^n$, as defined in
\cite[Definition~2.1]{alexdual}.
\end{remark}

\begin{remark}\label{r:fg/ZZ^n}
For $\ZZ^n$-modules, the finitely determined condition is
weaker---that is, more inclusive---than finitely generated, but it is
much stronger than tame or (equivalently, by Theorem~\ref{t:tame})
finitely encoded.  The reason is essentially
Example~\ref{e:convex-projection}, where the encoding has a very
special nature.  For a generic sort of example, the restriction
to~$\ZZ^n$ of any $\RR^n$-finite $\RR^n$-module with finitely many
constant regions of sufficient width is a tame $\ZZ^n$-module, and
there is simply no reason why the constant regions should be
commensurable with the coordinate directions in~$\ZZ^n$.  Already the
toy-model fly wing modules in Examples~\ref{e:toy-model-fly-wing}
and~\ref{e:encoding} yield infinitely generated but tame
$\ZZ^n$-modules, and this remains true when the discretization $\ZZ^n$
of~$\RR^n$ is rescaled by any factor.
\end{remark}

\begin{example}\label{e:local-cohomology}
The local cohomology of an affine semigroup ring is tame but usually
not finitely determined; see \cite{injAlg} and \cite[Chapter~13]{cca},
particularly Theorem~13.20, Example~13.17, and Example~13.4 in the
latter.
\end{example}

%%%%%%%%%%%%%%%%%%%%%%%%%%%%%%%%%%%%%%%%%%%%%%%%%%%%%%%%%%%%%%%%%%%%%%%%%
\subsection{Injective hulls and resolutions}\label{sub:inj}\mbox{}%%%%%%%

\begin{remark}\label{r:injective}
Every $\ZZ^n$-finite module that is injective in the category of
$\ZZ^n$-modules is, by \cite[Theorem~11.30]{cca}, a direct sum of
downset modules $\kk[D]$ for \emph{coprincipal} downsets $D = \aa +
\tau - \NN^n$, said to be \emph{cogenerated} by~$\aa$ \emph{along} the
face~$\tau$ of~$\NN^n$.  Note that faces of~$\NN^n$ correspond to
subsets of~$[n] = \{1,\dots,n\}$ via $\tau \leftrightarrow \{i \in [n]
\mid \ee_i \in \tau\}$.
\end{remark}

Minimal injective resolutions work for finitely determined modules
just as they do for finitely generated modules.  The standard
definitions are as follows.

\begin{defn}\label{d:inj}
Fix a $\ZZ^n$-module~$\cN$.
\begin{enumerate}
\item%
An \emph{injective hull} of~$\cN$ is an injective homomorphism $\cN
\to E$ in which $E$ is an injective $\ZZ^n$-module (see
Remark~\ref{r:injective}).  This injective hull is
\begin{itemize}
\item%
\emph{finite} if $E$ has finitely many indecomposable summands and
\item%
\emph{minimal} if the number of such summands is minimal.
\end{itemize}
\item%
An \emph{injective resolution} of~$\cN$ is a complex~$E^\spot$ of
injective $\ZZ^n$-modules whose differential $E^i \to E^{i+1}$ for $i
\geq 0$ has only one nonzero homology $H^0(E^\spot) \cong\nolinebreak
\cN$ (so $\cN \into E^0$ and $\coker(E^{i-1} \to E^i) \into E^{i+1}$
are injective hulls for all $i \geq 1$).  \nolinebreak$E^\spot$
\begin{itemize}
\item%
has \emph{length~$\ell$} if $E^i = 0$ for $i > \ell$ and $E^\ell \neq
0$;
\item%
is \emph{finite} if $E^\spot = \bigoplus_i E^i$ has finitely many
indecomposable summands; and
\item%
is \emph{minimal} if $\cN \into E^0$ and $\coker(E^{i-1} \to E^i)
\into E^{i+1}$ are minimal injective hulls for all $i \geq 1$.
\end{itemize}
\end{enumerate}
\end{defn}

\begin{prop}\label{p:determined}
The following are equivalent for a $\ZZ^n$-module~$\cN$.
\begin{enumerate}
\item%
$\cN$ is finitely determined.
\item%
$\cN$ admits a finite injective resolution.
\item%
$\cN$ admits a finite minimal injective resolution.
\end{enumerate}
Any finite minimal resolution is unique up to isomorphism and has
length~at~most~$n$.
\end{prop}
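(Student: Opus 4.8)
The plan is to prove the three conditions equivalent in the cycle $(1)\Rightarrow(3)\Rightarrow(2)\Rightarrow(1)$, and then address uniqueness and the length bound separately. The implication $(3)\Rightarrow(2)$ is trivial, since a finite minimal injective resolution is in particular a finite injective resolution. For $(2)\Rightarrow(1)$, I would argue that if $\cN$ admits a finite injective resolution $E^\spot$, then in particular $\cN \into E^0$ with $E^0$ a finite direct sum of coprincipal downset modules $\kk[\aa+\tau-\NN^n]$; each such summand is finitely determined (multiplication by $x_i$ is an isomorphism on $\kk[\aa+\tau-\NN^n]_\bb \to \kk[\aa+\tau-\NN^n]_{\bb+\ee_i}$ once $b_i$ is small enough or, if $\ee_i\in\tau$, once $b_i$ is large enough), hence so is the finite direct sum $E^0$. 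Then $\cN$ is a $\ZZ^n$-submodule of a finitely determined module; I would check that the finitely determined property is inherited by submodules, which follows because $\cdot x_i\colon N_\bb \to N_{\bb+\ee_i}$ sits inside the corresponding isomorphism for $E^0$, so it is injective for all $\bb$ and surjective once $b_i$ leaves a bounded interval. (Here I use that $\cN$ is $\ZZ^n$-finite by hypothesis, so ``isomorphism'' for the ambient module plus injectivity downstairs plus a dimension count gives the isomorphism for $\cN$; alternatively, apply the same reasoning to the cokernel of $\cN\into E^0$, which is again a submodule of $E^1$.)

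The substantive implication is $(1)\Rightarrow(3)$: a finitely determined module admits a finite minimal injective resolution. By Remark~\ref{r:determined}, after translating the grading I may assume $\cN$ is $\aa$-determined for some $\aa\in\NN^n$, i.e.\ $\cN$ is finitely encoded by the convex projection $\pi\colon\ZZ^n\to[\0,\aa]$ onto a box (Example~\ref{e:convex-projection}), corresponding to a finite-dimensional $\kk$-vector space worth of data on the box. The strategy is to build the injective hull $\cN\into E^0$ by taking, for each ``essential'' cogenerator of $\cN$ — detected inside the bounded box — the coprincipal injective $\kk[D]$ cogenerated there along the appropriate face; finiteness of the box guarantees finitely many summands. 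The key point is that the cokernel $\cN^{(1)} := \coker(\cN\into E^0)$ is again finitely determined (again $\aa$-determined, possibly after enlarging $\aa$ by a bounded amount coming from the exponents appearing in $E^0$): this is where I expect the main obstacle, namely verifying that injective hulls of $\aa$-determined modules are $\aa'$-determined with $\aa'$ controlled, so that the induction on homological dimension terminates. Here the cleanest route is probably to invoke the functorial equivalence \cite[Theorem~2.11]{alexdual} between finitely determined modules and finitely generated modules (over a larger polynomial ring, or via the \v Cech-hull / Matlis-dual dictionary), transporting the finitely generated theory's finite minimal injective resolutions across; once the resolution is finite, minimality is arranged by the standard procedure of splitting off trivial summands, and it is a genuine resolution because homology is unchanged.

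For uniqueness and the length bound: minimal injective resolutions are unique up to (non-canonical) isomorphism by the usual argument — any two minimal injective resolutions of $\cN$ are connected by a chain map lifting the identity, which is then an isomorphism in each degree because minimality forces the comparison map to be an isomorphism modulo the radical and injective hulls have no ``superfluous'' summands; I would cite the $\ZZ^n$-graded version in \cite[Chapter~11]{cca} and note the finitely determined case follows by the equivalence with the finitely generated case via \cite[Theorem~2.11]{alexdual}. The length bound of $n$ comes from the fact that the injective dimension of a finitely determined (equivalently, via translation, finitely generated after the equivalence) $\ZZ^n$-module over $\kk[\NN^n]$ is at most $n$; concretely, the minimal injective resolution of $\kk[\NN^n]$ itself has length $n$ (it is built from the faces of $\NN^n$, the top one having dimension~$n$), and for a general finitely determined $\cN$ one reduces to this by the local duality / Matlis-duality dictionary, or simply by noting $\operatorname{injdim}\cN = \operatorname{projdim}(\text{Matlis dual}) \le n$ by Hilbert's syzygy theorem. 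This last step is where one leans most heavily on the cited $\ZZ^n$-module literature rather than reproving anything.
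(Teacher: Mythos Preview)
Your $(1)\Rightarrow(3)$ route and your handling of uniqueness and the length bound are essentially the paper's: reduce to the finitely generated case via the equivalence in \cite[Theorem~2.11]{alexdual}.  The paper makes this explicit by truncating to the $\NN^n$-graded part $\cN_{\succeq\0}$, taking a minimal injective resolution there (Goto--Watanabe), and applying the \v Cech hull, which is exact and fixes the relevant indecomposable injectives; but that is just the concrete form of the functorial equivalence you invoke.

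Your $(2)\Rightarrow(1)$ argument, however, has a real gap.  The claim that the finitely determined property is inherited by submodules is false.  Take $E^0 = \kk[\ZZ^2]$ (the indecomposable injective cogenerated along the full face $\tau = \NN^2$) and the submodule $\kk[U]$ for the upset $U = \{(a,b)\in\ZZ^2 \mid a+b\geq 0\}$: multiplication by~$x_1$ fails to be an isomorphism exactly when $a + b = -1$, and there is no bounded interval in~$a$ containing all such values.  Your dimension count does show that $\dim N_\bb$ is weakly increasing in~$b_i$ and bounded once $b_i$ exits the interval for~$E^0$, but the point at which it stabilizes can depend unboundedly on the remaining coordinates of~$\bb$.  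The parenthetical alternative (``apply the same reasoning to the cokernel of $\cN\into E^0$, which is again a submodule of~$E^1$'') inherits the same defect.

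What the argument actually needs---and what the paper uses---is not that $\cN$ is a submodule of~$E^0$ but that $\cN = \ker(E^0\to E^1)$ is the kernel of a map between finitely determined modules.  Once $b_i$ lies outside the union of the bounded intervals for $E^0$ and~$E^1$, both vertical maps in
\[
\begin{array}{ccc}
(E^0)_\bb & \longrightarrow & (E^1)_\bb\\[0.3ex]
\big\downarrow\,x_i & & \big\downarrow\,x_i\\[0.3ex]
(E^0)_{\bb+\ee_i} & \longrightarrow & (E^1)_{\bb+\ee_i}
\end{array}
\]
are isomorphisms, so the induced map on kernels is one too.  More succinctly: the category of finitely determined modules is abelian (the paper observes that it is pulled back from modules supported in a box), and that is the missing input.
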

\begin{proof}
The proof is based on existence of finite minimal injective hulls and
resolutions for finitely generated $\ZZ^n$-modules, along with
uniqueness and length~$n$ given minimality, as proved by Goto and
Watanabe \cite{goto-watanabe1978}.

First assume $\cN$ is finitely determined.  Translating the
$\ZZ^n$-grading affects nothing about existence of a finite injective
resolution.  Therefore, using Remark~\ref{r:determined}, assume that
$\cN$ is $\aa$-determined.  Truncate by taking the $\NN^n$-graded part
of~$\cN$ to get a positively $\aa$-determined---and hence finitely
generated---module~$\cN_{\succeq\0}$; see
\cite[Definition~2.1]{alexdual}.  Take any minimal injective
resolution $\cN_{\succeq\0} \to E^\spot$.  Extend backward using the
\v Cech hull \cite[Definition~2.7]{alexdual}, which is exact
\cite[Lemma~2.9]{alexdual}, to get a finite minimal injective
resolution $\vC(\cN_{\succeq\0} \to E^\spot) = (\cN \to \vC E^\spot)$,
noting that $\vC$ fixes indecomposable injective modules whose
$\NN^n$-graded parts are nonzero and is zero on all other
indecomposable injective modules \cite[Lemma~4.25]{alexdual}.  This
proves 1~$\implies$~3.

That 3 $\implies$~2 is trivial.  The remaining implication,
2~$\implies$~1, follows because every indecomposable injective is
finitely determined and the category of finitely determined modules is
abelian.  (The category of $\ZZ^n$-modules each of which is nonzero
only in a bounded set of degrees is abelian, and constructions such as
kernels, cokernels, or direct sums in the category of finitely
determined modules are pulled back from there.)
\end{proof}

%%%%%%%%%%%%%%%%%%%%%%%%%%%%%%%%%%%%%%%%%%%%%%%%%%%%%%%%%%%%%%%%%%%%%%%%%
\subsection{Flat covers and resolutions}\label{sub:flat}\mbox{}%%%%%%%%%%

\noindent
Minimal flat resolutions are not commonplace, but the notion is Matlis
dual to that of minimal injective resolution.  In the context of
finitely determined modules, flat resolutions work as well as
injective resolutions.  The definitions are as follows.

\begin{defn}\label{d:flat}
Fix a $\ZZ^n$-module~$\cN$.
\begin{enumerate}
\item%
A \emph{flat cover} of~$\cN$ is a surjective homomorphism $F \to \cN$
in which $F$ is a flat $\ZZ^n$-module (see Remark~\ref{r:flat}).  This
flat cover is
\begin{itemize}
\item%
\emph{finite} if $F$ has finitely many indecomposable summands and
\item%
\emph{minimal} if the number of such summands is minimal.
\end{itemize}
\item%
A \emph{flat resolution} of~$\cN$ is a complex~$F_\spot$ of flat
$\ZZ^n$-modules whose differential $F_{i+1} \to F_i$ for $i \geq 0$
has only one nonzero homology $H_0(F_\spot) \cong \cN$ (so $F_0 \onto
\cN$ and $F_{i+1} \onto \ker(F_i \to F_{i-1})$ are flat covers for all
$i \geq 1$).  The flat resolution~$F_\spot$
\begin{itemize}
\item%
has \emph{length~$\ell$} if $F_i = 0$ for $i > \ell$ and $F_\ell \neq
0$;
\item%
is \emph{finite} if $F_\spot = \bigoplus_i F_i$ has finitely many
indecomposable summands; and
\item%
is \emph{minimal} if $F_0 \onto \cN$ and $F_{i+1} \onto \ker(F_i \to
F_{i-1})$ are minimal flat covers for all $i \geq 1$.
\end{itemize}
\end{enumerate}
\end{defn}

\begin{defn}\label{d:matlis}
The \emph{Matlis dual} of a $\ZZ^n$-module~$\cM$ is the
$\ZZ^n$-module~$\cM^\vee$ defined by
$$%
  (\cM^\vee)_\aa = \Hom_\kk(\cM_{-\aa},\kk),
$$
so the homomorphism $(\cM^\vee)_\aa \to (M^\vee)_\bb$ is transpose to
$\cM_{-\bb} \to \cM_{-\aa}$.
\end{defn}

\begin{lemma}\label{l:vee-vee}
$(\cM^\vee)^\vee\!$ is canonically isomorphic to~$\cM$ for any
$\ZZ^n$-finite module~$\cM$.~\hspace{1ex}$\square$
\end{lemma}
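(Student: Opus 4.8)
The plan is to apply, in each $\ZZ^n$-degree separately, the classical fact that a finite-dimensional vector space is canonically isomorphic to its double dual, and then to check that these degreewise isomorphisms are compatible with the structure homomorphisms. Fix $\aa \in \ZZ^n$. Unwinding Definition~\ref{d:matlis} twice gives
$$
  \bigl((\cM^\vee)^\vee\bigr)_\aa
  = \Hom_\kk\bigl((\cM^\vee)_{-\aa},\kk\bigr)
  = \Hom_\kk\bigl(\Hom_\kk(\cM_\aa,\kk),\kk\bigr).
$$
Because $\cM$ is $\ZZ^n$-finite, $\cM_\aa$ is finite-dimensional over~$\kk$, so the evaluation map $\ve_\aa \colon \cM_\aa \to \Hom_\kk(\Hom_\kk(\cM_\aa,\kk),\kk)$ carrying $v$ to the functional $\phi \mapsto \phi(v)$ is a $\kk$-linear isomorphism.

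Next I would verify that $\ve = (\ve_\aa)_{\aa\in\ZZ^n}$ intertwines the structure homomorphisms. For $\aa \preceq \bb$, two applications of the ``transpose'' clause in Definition~\ref{d:matlis} show that the structure homomorphism of $(\cM^\vee)^\vee$ from degree~$\aa$ to degree~$\bb$ is the double transpose $\mu^{\vee\vee}$ of the structure homomorphism $\mu \colon \cM_\aa \to \cM_\bb$ of~$\cM$. Applying $\ve_\bb \circ \mu$ and $\mu^{\vee\vee} \circ \ve_\aa$ to a vector $v \in \cM_\aa$ and then evaluating on an arbitrary $\psi \in \Hom_\kk(\cM_\bb,\kk)$ produces $\psi(\mu(v))$ in both cases; hence the relevant square commutes. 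Thus $\ve$ is a homomorphism of $\ZZ^n$-modules, and it is an isomorphism since every $\ve_\aa$ is.

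There is no substantive obstacle here; the only point demanding care is the degree bookkeeping, and it is the thing I would double-check: each of the two applications of Definition~\ref{d:matlis} negates the grading, so the two negations cancel and $\ve_\aa$ indeed lands in the degree-$\aa$ component rather than in a reflected one. Canonicity (naturality in~$\cM$) is then immediate from naturality of the evaluation map in its vector-space argument: any homomorphism $\cM \to \cN$ of $\ZZ^n$-finite modules yields, in each degree, a commuting square with $\ve$, so $\ve$ is a natural isomorphism from the identity functor to $(\,\cdot\,)^{\vee\vee}$ on the category of $\ZZ^n$-finite modules.
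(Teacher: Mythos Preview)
Your proof is correct; the paper gives no proof at all for this lemma (the $\square$ at the end of the statement signals that it is left as a routine verification). Your argument---degreewise evaluation isomorphism for finite-dimensional vector spaces, compatibility via the double-transpose identity, and naturality---is exactly the standard verification one would supply.
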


\begin{remark}\label{r:flat}
By the adjunction between Hom and $\otimes$, a module is flat if and
only its Matlis dual is injective (see \cite[\S1.2]{alexdual}, for
example).  The Matlis dual of Remark~\ref{r:injective} therefore says
that every $\ZZ^n$-finite flat $\ZZ^n$-module is isomorphic to a
direct sum of upset modules~$\kk[U]$ for upsets of the form $U = \bb -
\tau + \NN^n = \bb + \NN^n + \ZZ\tau$.  These upset modules are the
graded translates of localizations of~$\kk[\NN^n]$ along faces.
\end{remark}

%%%%%%%%%%%%%%%%%%%%%%%%%%%%%%%%%of \texorpdfstring{$\ZZ^n$}{Zn}-modules%
\subsection{Flange presentations}\label{sub:flange}\mbox{}%%%%%%%%%%%%%%%

\begin{defn}\label{d:flange}
Fix a $\ZZ^n$-module~$\cN$.
\begin{enumerate}
\item%
A \emph{flange presentation} of~$\cN$ is a $\ZZ^n$-module morphism
$\phi: F \to E$, with image isomorphic to~$\cN$, where $F$ is flat and
$E$ is injective in the category of \mbox{$\ZZ^n$-modules}.
\item%
If $F$ and~$E$ are expressed as direct sums of indecomposables, then
$\phi$ is \emph{based}.
\item%
If $F$ and~$E$ are finite direct sums of indecomposables, then $\phi$
is \emph{finite}.
\item%
If the number of indecomposable summands of~$F$ and~$E$ are
simultaneously minimized then $\phi$ is \emph{minimal}.
\end{enumerate}
\end{defn}

\begin{remark}\label{r:portmanteau-fl}
The term \emph{flange} is a portmanteau of \emph{flat} and
\emph{injective} (i.e., ``flainj'') because a flange presentation is
the composite of a flat cover and an injective hull.
\end{remark}

The same notational trick to make fringe presentations effective data
structures (Definition~\ref{d:monomial-matrix-fr}) works on flange
presentations.

\begin{defn}\label{d:monomial-matrix-fl}
Fix a based finite flange presentation $\phi:
\bigoplus_p\hspace{-.2pt} F_p = F \to E =
\nolinebreak\bigoplus_q\hspace{-.2pt} E_q$.  A \emph{monomial matrix}
for $\phi$ is an array of \emph{scalar entries}~$\phi_{qp}$ whose
columns are labeled by the indecomposable flat summands~$F_p$ and
whose rows are labeled by the indecomposable injective summands~$E_q$:
$$%
\begin{array}{ccc}
  &
  \monomialmatrix
	{F_1\\\vdots\ \\F_k\\}
	{\begin{array}{ccc}
		   E_1    & \cdots &    E_\ell   \\
		\phi_{11} & \cdots & \phi_{1\ell}\\
		\vdots    & \ddots &   \vdots    \\
		\phi_{k1} & \cdots & \phi_{k\ell}\\
	 \end{array}}
	{\\\\\\}
\\
  F_1 \oplus \dots \oplus F_k = F
  & \fillrightmap
  & E = E_1 \oplus \dots \oplus E_\ell.
\end{array}
$$
\end{defn}

The entries of the matrix $\phi_{\spot\spot}$ correspond to
homomorphisms $F_p \to E_q$.

\begin{lemma}\label{l:F->E}
If $F = \kk[\aa + \ZZ\tau' + \NN^n]$ is an indecomposable flat
$\ZZ^n$-module and $E = \kk[\bb + \ZZ\tau - \NN^n]$ is an
indecomposable injective $\ZZ^n$-module, then $\Hom_{\ZZ^n}(F, E) = 0$
unless $(\aa + \ZZ\tau' + \NN^n) \cap (\bb + \ZZ\tau - \NN^n) \neq
\nothing$, in which case $\Hom_{\ZZ^n}(F, E) = \kk$.
\end{lemma}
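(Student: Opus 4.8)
The plan is to recognize $F$ and $E$ as an upset module and a downset module over the poset $\cQ = \ZZ^n$ and then invoke Proposition~\ref{p:U->D}.\ref{i:U->D}, or more conveniently Corollary~\ref{c:U->D}.\ref{i:kk}. Write $U = \aa + \ZZ\tau' + \NN^n$ and $D = \bb + \ZZ\tau - \NN^n$, so that $F = \kk[U]$ for the upset~$U$ and $E = \kk[D]$ for the downset~$D$ (Remarks~\ref{r:flat} and~\ref{r:injective}). Identifying a face of~$\NN^n$ with the corresponding subset of~$[n] = \{1,\dots,n\}$ as in Remark~\ref{r:injective}, the set~$U$ consists of those $\cc \in \ZZ^n$ with $c_i \geq a_i$ for every $i \notin \tau'$ (the coordinates indexed by~$\tau'$ being unconstrained), and $D$ consists of those $\cc \in \ZZ^n$ with $c_i \leq b_i$ for every $i \notin \tau$.

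First I would dispatch the case $U \cap D = \nothing$: here any $\cQ$-module homomorphism $\phi: \kk[U] \to \kk[D]$ vanishes in every degree~$q$, because $q \in U$ forces $\kk[D]_q = 0$ while $q \notin U$ forces $\kk[U]_q = 0$; hence $\Hom_{\ZZ^n}(F,E) = 0$. (This is also Proposition~\ref{p:U->D}.\ref{i:U->D}, since $\pi_0(\nothing) = \nothing$.)

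For the case $U \cap D \neq \nothing$, the point is that $U$ is lower-connected as a subposet of~$\ZZ^n$: given $\cc, \cc' \in U$, their coordinatewise minimum $\cc \wedge \cc'$ again satisfies $(\cc \wedge \cc')_i = \min(c_i,c_i') \geq a_i$ for every $i \notin \tau'$, so $\cc \wedge \cc' \in U$ and it is a common lower bound of $\cc$ and~$\cc'$ inside~$U$. (Dually, $D$ is upper-connected, via coordinatewise maxima.) Corollary~\ref{c:U->D}.\ref{i:kk} then yields $\Hom_{\ZZ^n}(\kk[U],\kk[D]) = \kk$, completing the proof. The only thing requiring care is the translation between the additive presentations $\aa + \ZZ\tau' + \NN^n$ and $\bb + \ZZ\tau - \NN^n$ and the coordinatewise-inequality descriptions of~$U$ and~$D$ used above, i.e.\ the correspondence between faces of~$\NN^n$ and subsets of~$[n]$ recalled in Remark~\ref{r:injective}; this is entirely routine, so I do not anticipate a genuine obstacle.
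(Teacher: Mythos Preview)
Your proof is correct and follows exactly the paper's approach: the paper's entire proof is the single citation ``Corollary~\ref{c:U->D}.\ref{i:kk}'', and you have simply unpacked why the hypotheses of that corollary (lower-connectedness of~$U$, or dually upper-connectedness of~$D$) are satisfied via coordinatewise minima, along with the trivial empty-intersection case.
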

\begin{proof}
Corollary~\ref{c:U->D}.\ref{i:kk}.
\end{proof}

\begin{defn}\label{d:F<E}
In the situation of Lemma~\ref{l:F->E}, write $F \preceq E$ if their
degree sets have nonempty intersection: $(\aa + \ZZ\tau' + \NN^n) \cap
(\bb + \ZZ\tau - \NN^n) \neq \nothing$.
\end{defn}

\begin{prop}\label{p:scalars-fl}
With notation as in Definition~\ref{d:monomial-matrix-fl}, $\phi_{pq} =
0$ unless $F_p \preceq E_q$.  Conversely, if an array of scalars
$\phi_{qp} \in \kk$ with rows labeled by indecomposable flat modules
and columns labeled by indecomposable injectives has $\phi_{pq} = 0$
unless $F_q \preceq E_q$, then it represents a flange presentation.
\end{prop}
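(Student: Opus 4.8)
The plan is to mimic the proof of Proposition~\ref{p:scalars}, replacing Proposition~\ref{p:U->D} and Definition~\ref{d:connected-homomorphism} by their flat--injective counterparts over~$\ZZ^n$, namely Lemma~\ref{l:F->E} and Definition~\ref{d:F<E}. All of the content is the computation recorded in Lemma~\ref{l:F->E}, that $\Hom_{\ZZ^n}(F_p, E_q) = \kk$ when $F_p \preceq E_q$ and $\Hom_{\ZZ^n}(F_p, E_q) = 0$ otherwise, together with the observation that an indecomposable flat $\ZZ^n$-module is an upset module and an indecomposable injective is a downset module, so that in the nonzero case the unique (up to scalar) homomorphism $F_p \to E_q$ acts as multiplication by a single scalar on each graded piece over the connected intersection of degree sets, by Corollary~\ref{c:U->D}.\ref{i:kk}. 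That scalar is by construction the matrix entry $\phi_{pq}$ in Definition~\ref{d:monomial-matrix-fl}.

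For the forward implication, a monomial matrix for $\phi$ records in slot~$(p,q)$ the scalar describing the component $F_p \to E_q$ of~$\phi$, an element of $\Hom_{\ZZ^n}(F_p, E_q)$. When $F_p \not\preceq E_q$ this $\Hom$ group vanishes by Lemma~\ref{l:F->E}, so $\phi_{pq} = 0$; this is the asserted necessary condition.

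For the converse, given an array $(\phi_{pq})$ of scalars with rows labeled by indecomposable flats~$F_p$ and columns by indecomposable injectives~$E_q$, and with $\phi_{pq} = 0$ whenever $F_p \not\preceq E_q$, I would define $\phi\colon F = \bigoplus_p F_p \to E = \bigoplus_q E_q$ by taking its $(p,q)$ component to be $\phi_{pq}$ times the distinguished generator of $\Hom_{\ZZ^n}(F_p, E_q) \cong \kk$ --- the homomorphism acting as multiplication by~$1$ on the connected intersection of degree sets --- when $F_p \preceq E_q$, and~$0$ otherwise (which is forced, the $\Hom$ group being~$0$). Then $\phi$ is a genuine $\ZZ^n$-module morphism from a finite direct sum of indecomposable flats to a finite direct sum of indecomposable injectives, hence a based finite flange presentation of $\cN := \image\phi$ in the sense of Definition~\ref{d:flange}. (That $\cN$ is finitely determined comes for free: it is the image of a morphism between $\ZZ^n$-finite flat and injective modules, which are themselves finitely determined, inside the abelian category of finitely determined modules appearing in the proof of Proposition~\ref{p:determined}.)

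I expect no genuine obstacle here; this is the $\ZZ^n$, flat--injective transcription of the routine Proposition~\ref{p:scalars}. The only point requiring a word of care is that the ``distinguished generator'' of $\Hom_{\ZZ^n}(F_p, E_q)$ is well defined, which is exactly the one-dimensionality asserted in Lemma~\ref{l:F->E} (via Corollary~\ref{c:U->D}.\ref{i:kk}); once that identification of each $\Hom$ space with~$\kk$ is fixed, passing between flange presentations and their monomial matrices is a bijective bookkeeping device, and both directions of the proposition are immediate.
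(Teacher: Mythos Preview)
Your proof is correct and follows essentially the same approach as the paper, which simply cites Lemma~\ref{l:F->E} and Definition~\ref{d:F<E}. You have spelled out in detail what the paper leaves implicit, but the content is identical: the vanishing or one-dimensionality of $\Hom_{\ZZ^n}(F_p,E_q)$ from Lemma~\ref{l:F->E} handles both directions.
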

\begin{proof}
Lemma~\ref{l:F->E} and Definition~\ref{d:F<E}.
\end{proof}

The unnatural hypothesis that a persistence module be finitely
generated results in data types and structure theory that are
asymmetric regarding births as opposed to deaths.  In contrast, the
notion of flange presentation is self-dual: their duality interchanges
the roles of births~($F$) and deaths~($E$).

\begin{prop}\label{p:duality}
A $\ZZ^n$-module $\cN$ has a finite flange presentation $F \to E$ if
and only if the Matlis dual $E^\vee \to F^\vee$ is a finite flange
presentation of the Matlis dual $\cN^\vee$.
\end{prop}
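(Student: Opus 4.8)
The plan is to read a flange presentation as a conjunction of three properties of a morphism $\phi\colon F\to E$ — flatness of the source, injectivity of the target, and $\image\phi\cong\cN$ — and to observe that the Matlis dual (Definition~\ref{d:matlis}) reverses each of the three while carrying finite based data to finite based data. First I would record the two general facts needed: (a) $(-)^\vee$ is a contravariant exact functor, since in each degree it is the vector-space duality $\Hom_\kk(-,\kk)$ over a field, which is exact; and (b) on $\ZZ^n$-finite modules $(-)^{\vee\vee}\cong\id$ by Lemma~\ref{l:vee-vee}. I would then note that all modules in sight are $\ZZ^n$-finite: the indecomposable flats $\kk[\bb+\ZZ\tau+\NN^n]$ and indecomposable injectives $\kk[\bb+\ZZ\tau-\NN^n]$ have one-dimensional graded pieces, finite direct sums preserve this, and $\cN$ — being a subquotient of such an $F$ (or $E$) — inherits it, as do all the duals. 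Consequently it suffices to prove the ``only if'' direction, since applying it to $\phi^\vee$ and $\cN^\vee$ and invoking (b) yields the ``if'' direction.

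For the ``only if'' direction I would verify the three reversals in turn. By Remark~\ref{r:flat}, $F$ flat implies $F^\vee$ injective; applying the same equivalence to the module $E^\vee$ and using $(E^\vee)^\vee\cong E$ shows that $E^\vee$ is flat exactly when $E$ is injective. For the image, factor $\phi$ as $F\onto\cN\into E$; exactness of $(-)^\vee$ turns the surjection into an injection and the injection into a surjection, giving $E^\vee\onto\cN^\vee\into F^\vee$ whose composite is $\phi^\vee$, so $\image\phi\cong\cN$ forces $\image\phi^\vee\cong\cN^\vee$. For finiteness and basedness, a direct computation of graded pieces shows that the Matlis dual of the indecomposable injective $\kk[\bb+\ZZ\tau-\NN^n]$ is the indecomposable flat $\kk[-\bb+\ZZ\tau+\NN^n]$, and symmetrically for flats; hence a decomposition of $E$ into finitely many indecomposable injectives dualizes to a decomposition of $E^\vee$ into finitely many indecomposable flats, and likewise for $F$. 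Concretely, a monomial matrix (Definition~\ref{d:monomial-matrix-fl}) for $\phi^\vee$ is obtained from one for $\phi$ by transposing the scalar array and replacing each row or column label by the negative of its degree set; since negation preserves nonemptiness of the relevant intersections, Lemma~\ref{l:F->E} and Definition~\ref{d:F<E} show the relation $\preceq$ governing which entries may be nonzero is merely transposed along with the matrix.

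I do not expect a genuine obstacle here: the content is entirely bookkeeping with Matlis duality. The single point that needs care — and the reason I would front-load the observation above — is ensuring that every module involved lies in the category on which $(-)^\vee$ is exact and involutive, so that the double-dual identification $\phi^{\vee\vee}\cong\phi$ used to deduce the ``if'' direction is legitimate. Combining the three reversals with Lemma~\ref{l:vee-vee} gives the stated equivalence.
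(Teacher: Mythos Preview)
Your proposal is correct and follows essentially the same approach as the paper's proof, which simply records that Matlis duality is exact and contravariant, interchanges flat and injective objects, and is involutive on $\ZZ^n$-finite modules (citing Lemma~\ref{l:vee-vee} and \cite[\S1.2]{alexdual}); you have merely unpacked these properties explicitly and checked that all modules involved are $\ZZ^n$-finite so the double-dual argument applies.
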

\begin{proof}
Matlis duality is an exact, contravariant functor on~$\ZZ^n$-modules
that takes the subcategory of finitely determined $\ZZ^n$-modules to
itself (these properties are immediate from the definitions),
interchanges flat and injective objects therein, and has the property
that the natural map $(\cN^\vee)^\vee \to \cN$ is an isomorphism for
finitely determined~$\cN$ (Lemma~\ref{l:vee-vee}); see
\cite[\S1.2]{alexdual} for a discussion of these properties.
\end{proof}

%%%%%%%%%%%%%%%%%%%%%%%%%%%%%%%%%%%%%%%%%%%%%%%%%%%%%%%%%%%%%%%%%%%%%%%%%
\subsection{Syzygy theorem for \texorpdfstring{$\ZZ^n$}{Zn}-modules}\label{sub:Zsyzygy}

\begin{thm}\label{t:finitely-determined}
A $\ZZ^n$-module is finitely determined if and only if it admits one,
and hence all, of the following:
\begin{enumerate}
\item\label{i:flange}%
a finite flange presentation; or
\item\label{i:flat-presentation}%
a finite flat presentation; or
\item\label{i:injective-copresentation}%
a finite injective copresentation; or
\item\label{i:flat-res}%
a finite flat resolution; or
\item\label{i:injective-res}%
a finite injective resolution; or
\item\label{i:minimal}%
a minimal one of any of the above.
\end{enumerate}
Any minimal one of these objects is unique up to noncanonical
isomorphism, and the resolutions have length at most~$n$.
\end{thm}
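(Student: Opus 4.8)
The plan is to establish a cycle of implications among the six conditions, using finite determinacy as the hub. First I would record the easy reductions: a finite flat resolution has a finite flat presentation as its tail (\ref{i:flat-res}~$\implies$~\ref{i:flat-presentation}), a finite injective resolution yields a finite injective copresentation (\ref{i:injective-res}~$\implies$~\ref{i:injective-copresentation}), and a minimal version of anything is in particular a version of it (\ref{i:minimal}~$\implies$ the rest). Conversely, Proposition~\ref{p:determined} already supplies the equivalence of finite determinacy with the existence of a finite injective resolution and with a finite \emph{minimal} injective resolution, together with uniqueness and the length bound~$n$; so \ref{i:injective-res} and its minimal form are handled. By Matlis duality (the functor of Definition~\ref{d:matlis}, exact and preserving finite determinacy by Lemma~\ref{l:vee-vee} and the discussion in the proof of Proposition~\ref{p:duality}), the flat-side statements \ref{i:flat-res} and its minimal form are the Matlis duals of the injective-side ones, so they too are equivalent to finite determinacy with length~$\le n$.

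Next I would close the loop through the presentation/copresentation conditions. That a finitely determined module admits a finite injective copresentation is immediate from Proposition~\ref{p:determined} (truncate a finite injective resolution after two steps); dually for a finite flat presentation. For the reverse directions, the key point is that the class of finitely determined $\ZZ^n$-modules is abelian (as noted in the proof of Proposition~\ref{p:determined}: kernels, cokernels, and finite direct sums of bounded-in-each-coordinate modules are computed in the ambient abelian category and stay finitely determined), and every indecomposable flat module $\kk[\bb+\ZZ\tau+\NN^n]$ and every indecomposable injective $\kk[\bb+\ZZ\tau-\NN^n]$ is finitely determined. Hence if $\cN$ is the cokernel of a map of finite flat modules, or the kernel of a map of finite injective modules, it is finitely determined. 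This gives \ref{i:flat-presentation}~$\implies$~(finitely determined) and \ref{i:injective-copresentation}~$\implies$~(finitely determined).

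It remains to treat the flange presentation \ref{i:flange}. If $\cN$ is finitely determined, compose a finite flat cover $F\onto\cN$ (Matlis dual to a finite injective hull of $\cN^\vee$) with a finite injective hull $\cN\into E$ from Proposition~\ref{p:determined} to obtain a finite flange presentation $F\to E$ with image~$\cN$; minimality on both sides is arranged by taking minimal covers and hulls, and uniqueness up to noncanonical isomorphism follows from uniqueness of minimal flat covers and minimal injective hulls. Conversely, given a finite flange presentation $\phi:F\to E$, the module $\cN\cong\image\phi$ is a subquotient of finitely determined modules, hence finitely determined by the abelian-category argument above. The length statement for the resolutions then comes from Proposition~\ref{p:determined} (length $\le n$ for the minimal injective resolution) and its Matlis dual. \textbf{The main obstacle} I anticipate is bookkeeping rather than conceptual: one must be careful that ``finite'' (finitely many indecomposable summands) is genuinely preserved under the passages between presentations and resolutions---in particular that extending a finite minimal injective resolution backward via the \v Cech hull, as in the proof of Proposition~\ref{p:determined}, does not introduce infinitely many summands---and that ``minimal'' transports correctly through Matlis duality, which interchanges minimal flat covers with minimal injective hulls. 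These are exactly the points where the finitely-determined hypothesis (as opposed to merely tame) is doing the work, and where citing \cite[\S1.2]{alexdual} and Goto--Watanabe \cite{goto-watanabe1978} for minimality, uniqueness, and the length bound is essential.
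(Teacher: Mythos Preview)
Your proposal is correct and follows essentially the same approach as the paper: both arguments pivot on Proposition~\ref{p:determined} for the injective side, transport to the flat side via Matlis duality, and then use that indecomposable flats and injectives are finitely determined together with the abelian-category closure to show that any of the finite presentation-type conditions forces finite determinacy. Your write-up is more explicit about the cycle of implications and the bookkeeping (especially for the flange case and for minimality transporting through duality), but the paper's proof is simply a more compressed version of the same argument.
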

\begin{proof}
The hard work is done by Proposition~\ref{p:determined}.  It implies
that $\cN$ is finitely determined $\iff \cN^\vee$ has a minimal
injective resolution $\iff \cN$ has a minimal flat resolution of
length at most~$n$, since the Matlis dual of any finitely determined
module~$\cN$ is finitely determined.  Having both a minimal injective
resolution and a minimal flat resolution is stronger than having any
of items~\ref{i:flange}--\ref{i:injective-copresentation}, minimal or
otherwise, so it suffices to show that $\cN$ is finitely determined if
$\cN$ has any of
items~\ref{i:flange}--\ref{i:injective-copresentation}.  This follows,
using that the category of finitely determined modules~is~abelian as
in the proof of Proposition~\ref{p:determined}, from the fact that
every indecomposable injective or flat $\ZZ^n$-module is finitely
determined.
\end{proof}

\begin{remark}\label{r:finitely-determined}
Conditions~\ref{i:flange}--\ref{i:minimal} in
Theorem~\ref{t:finitely-determined} remain equivalent for
$\RR^n$-modules, with the standard positive cone $\RR^n_+$, assuming
that the finite flat and injective modules in question are finite
direct sums of localizations of~$\RR^n$ along faces and their Matlis
duals.  (The equivalence, including minimality, is a consequence of
the generator and cogenerator theory over real polyhedral groups
\cite{essential-real}.)  The equivalent conditions do not characterize
$\RR^n$-modules that are pulled back under convex projection from
arbitrary modules over an interval in~$\RR^n$, though, because all
sorts of infinite things can can happen inside of a box, such as
having generators~along~a~curve.
\end{remark}

%%%%%%%%%%%%%%%%%%%%%%%%%%%%%%%%%%%%%%%%%%%%%%%%%%%%%%%%%%%%%%%%%%%%%%%%%
\section{Homological algebra of poset modules}\label{s:syzygy}%%%%%%%%%%%
%%%%%%%%%%%%%%%%%%%%%%%%%%%%%%%%%%%%%%%%%%%%%%%%%%%%%%%%%%%%%%%%%%%%%%%%%

%%%%%%%%%%%%%%%%%%%%%%%%%%%%%%%%%%%%%%%%%%%%%%%%%%%%%%%%%%%%%%%%%%%%%%%%%
\subsection{Indicator resolutions}\label{sub:indicator-res}%%%%%%%%%%%%%%

\begin{defn}\label{d:resolutions}
Fix any poset~$\cQ$ and a $\cQ$-module~$\cM$.
\begin{enumerate}
\item%
An \emph{upset resolution} of~$\cM$ is a complex~$F_\spot$ of
$\cQ$-modules, each a direct sum of upset submodules of~$\kk[\cQ]$,
whose differential $F_i \to F_{i-1}$ decreases homological degrees,
has components $\kk[U] \to \kk[U']$ that are connected
(Definition~\ref{d:connected-homomorphism}), and has only one nonzero
homology $H_0(F_\spot) \cong \cM$.

\item%
A \emph{downset resolution} of~$\cM$ is a complex~$E^\spot$ of
$\cQ$-modules, each a direct sum of downset quotient modules
of~$\kk[\cQ]$, whose differential $E^i \to E^{i+1}$ increases
cohomological degrees, has components $\kk[D'] \to \kk[D]$ that are
connected, and has only one nonzero homology $H^0(E^\spot)
\cong\nolinebreak \cM$.
\end{enumerate}\setcounter{separated}{\value{enumi}}%saves \enumi
An upset or downset resolution is called an \emph{indicator
resolution} if the ``up-'' or ``down-'' nature is unspecified.  The
\emph{length} of an indicator resolution is the largest
(\hspace{-.1pt}co\hspace{-.1pt})homological degree in which the
complex is nonzero.  An indicator resolution
\begin{enumerate}\setcounter{enumi}{\value{separated}}%restores \enumi
\item%
is \emph{finite} if the number of indicator module summands is finite,

\item%
\emph{dominates} a constant subdivision or encoding of~$\cM$ if the
subdivision or encoding is subordinate to each indicator summand, and

\item\label{i:auxiliary-resolution}%
is \emph{semialgebraic}, \emph{PL}, \emph{subanalytic}, or \emph{of
class~$\mathfrak X$} if $\cQ$ is a subposet of a real partially
ordered group and the resolution dominates a constant subdivision or
encoding of the corresponding type.
\end{enumerate}
\end{defn}

\begin{defn}\label{d:resolution-monomial-matrix}
Monomial matrices for indicator resolutions are defined similarly to
those for fringe presentations in
Definition~\ref{d:monomial-matrix-fr}, except that for the
cohomological case the row and column labels are source and target
downsets, respectively, while in the homological case the row and
column labels are target and source upsets, respectively:
$$%
\begin{array}{ccc}
  &
  \monomialmatrix
	{\vdots\ \\D^i_p\\\vdots\ }
	{\begin{array}{ccc}
		 \cdots & D^{i+1}_q & \cdots \\
		        &           &        \\
		        & \phi_{pq} &        \\
		        &           &        \\
	 \end{array}}
	{\\\\\\}
\\
    E^i
  & \fillrightmap
  & E^{i+1}
\end{array}
\qquad\text{and}\qquad
\begin{array}{ccc}
  &
  \monomialmatrix
	{\vdots\ \\U_i^p\\\vdots\ }
	{\begin{array}{ccc}
		 \cdots & U_{i+1}^q & \cdots \\
		        &           &        \\
		        & \phi_{pq} &        \\
		        &           &        \\
	 \end{array}}
	{\\\\\\}
\\
    F_i
  & \filleftmap
  & F_{i+1}.
\end{array}
$$
(Note the switch of source and target from cohomological to
homological, so the map goes from right to left in the homological
case, with decreasing homological indices.)
\end{defn}

As in Proposition~\ref{p:pullback-monomial-matrix}, pullbacks have
transparent monomial matrix interpretations.

\begin{prop}\label{p:syzygy-monomial-matrix}
Fix a poset~$\cQ$ and an encoding of a $\cQ$-module~$\cM$ by a poset
morphism $\pi: \cQ \to \cP$ and $\cP$-module~$\cH$.  Monomial matrices
for any indicator resolution of~$\cH$ pull back to monomial matrices
for an indicator resolution of~$\cM$ that dominates the encoding by
replacing the row and column labels with their preimages under~$\pi$.
\hfill$\square$
\end{prop}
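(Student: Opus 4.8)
The plan is to apply the pullback functor $\pi^*$ termwise to an indicator resolution of~$\cH$ and to check that every structural feature demanded by Definition~\ref{d:resolutions}, including domination of the encoding, survives, with the scalar entries of the monomial matrices left untouched. I would carry this out for the upset (homological) case; the downset (cohomological) case is verbatim the same with the arrows and indices reversed.

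First I would record three elementary facts about $\pi^*$. (i)~It is exact: a sequence of $\cQ$-modules is exact if and only if it is exact in each degree $q \in \cQ$, and in degree~$q$ it coincides with the corresponding sequence of $\cP$-modules in degree $\pi(q)$. (ii)~It commutes with arbitrary direct sums, by the same degreewise description. (iii)~For an upset $U \subseteq \cP$ one has $\pi^*\kk[U] = \kk[\pi^{-1}(U)]$, where $\pi^{-1}(U)$ is an upset of~$\cQ$ because $\pi$ is order-preserving; this is precisely Lemma~\ref{l:constant}, which also handles the dual downset statement. Since $\pi^*\kk[\cP] = \kk[\cQ]$, it follows that $\pi^*$ carries upset submodules of $\kk[\cP]$ to upset submodules of $\kk[\cQ]$. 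Finally, if $\phi: \kk[U] \to \kk[U']$ is a connected homomorphism with scalar $\lambda$ (Definition~\ref{d:connected-homomorphism}), then $\pi^*\phi$ acts as multiplication by $\lambda$ on each degree of $\pi^{-1}(U) \cap \pi^{-1}(U') = \pi^{-1}(U \cap U')$, hence is connected with the same scalar.

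With these in hand the conclusion is immediate. Applying $\pi^*$ to an upset resolution $F_\spot \to \cH$ yields a complex of $\cQ$-modules, each term a direct sum of upset submodules of $\kk[\cQ]$ (by~(ii) and~(iii)), with connected differentials (by the last observation), and with a single nonzero homology $H_0(\pi^*F_\spot) \cong \pi^*(H_0 F_\spot) \cong \pi^*\cH \cong \cM$ (by~(i)); so $\pi^*F_\spot$ is an upset resolution of~$\cM$. Each summand $\kk[\pi^{-1}(U)] \cong \pi^*\kk[U]$ is by construction pulled back along~$\pi$, so~$\pi$ is subordinate to it (Definition~\ref{d:subordinate-encoding}), whence $\pi^*F_\spot$ dominates the encoding. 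Replacing every row or column label $U$ of a monomial matrix for a differential of $F_\spot$ by $\pi^{-1}(U)$ while retaining the scalars $\phi_{pq}$ then produces, via the description of $\pi^*\phi$ above, a monomial matrix for $\pi^*F_\spot$ as in Definition~\ref{d:resolution-monomial-matrix}; moreover finiteness of the number of summands, as well as any semialgebraic, PL, subanalytic, or class-$\mathfrak{X}$ structure, is inherited from $F_\spot$ together with the corresponding type of the fibers of~$\pi$.

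There is no genuine obstacle in this argument: the one point deserving care is the identity $\pi^*\kk[U] = \kk[\pi^{-1}(U)]$ with $\pi^{-1}(U)$ an upset (and its downset dual), which is exactly Lemma~\ref{l:constant}; everything else is the formal behaviour of an exact, direct-sum-preserving, indicator-preserving functor, entirely parallel to Proposition~\ref{p:pullback-monomial-matrix}.
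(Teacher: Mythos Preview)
Your proof is correct and is exactly the routine verification the paper implicitly has in mind: the proposition carries a terminal $\square$ in the paper, meaning it is left as an immediate consequence of the exactness and indicator-preserving behaviour of~$\pi^*$ (Lemma~\ref{l:constant}), in direct parallel with Proposition~\ref{p:pullback-monomial-matrix}. You have simply spelled out the details the paper suppresses.
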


\begin{defn}\label{d:indicator-(co)presentation}
Fix any poset~$\cQ$ and a $\cQ$-module~$\cM$.
\begin{enumerate}
\item%
An \emph{upset presentation} of~$\cM$ is an expression of~$\cM$ as the
cokernel of a homomorphism $F_1 \to F_0$ such that each $F_i$ is a
direct sum of upset modules and every component $\kk[U'] \to \kk[U]$
of the homomorphism is connected
(Definition~\ref{d:connected-homomorphism}).

\item%
A \emph{downset copresentation} of~$\cM$ is an expression of~$\cM$ as
the kernel of a homomorphism $E^0 \to E^1$ such that each $E^i$ is a
direct sum of downset modules and every component $\kk[D] \to \kk[D']$
of the homomorphism is connected.
\end{enumerate}\setcounter{separated}{\value{enumi}}%saves \enumi
These \emph{indicator presentations} are \emph{finite}, or
\emph{dominate} a constant subdivision or encoding of~$\cM$, or are
\emph{semialgebraic}, \emph{PL}, \emph{subanalytic}, or \emph{of
class~$\mathfrak X$} as in Definition~\ref{d:resolutions}.
\end{defn}

\begin{example}\label{e:one-param-upset}
In one parameter, the bar $[a,b)$ in Example~\ref{e:one-param-fringe},
has upset presentation
$$%
\hspace{8ex}
\psfrag{a}{\footnotesize\raisebox{-.2ex}{$a$}}
\psfrag{b}{\footnotesize\raisebox{-.2ex}{$b$}}
\psfrag{vert-to}{\small\raisebox{-.2ex}{$\downarrow$}}
\psfrag{vert-into}{\small\raisebox{-.2ex}{$\lhookdownarrow$}}
\psfrag{vert-onto}{\small\raisebox{-.2ex}{$\twoheaddownarrow$}}
\psfrag{has image}{}%\footnotesize has image}
\begin{array}{@{}c@{}}
\includegraphics[height=20mm]{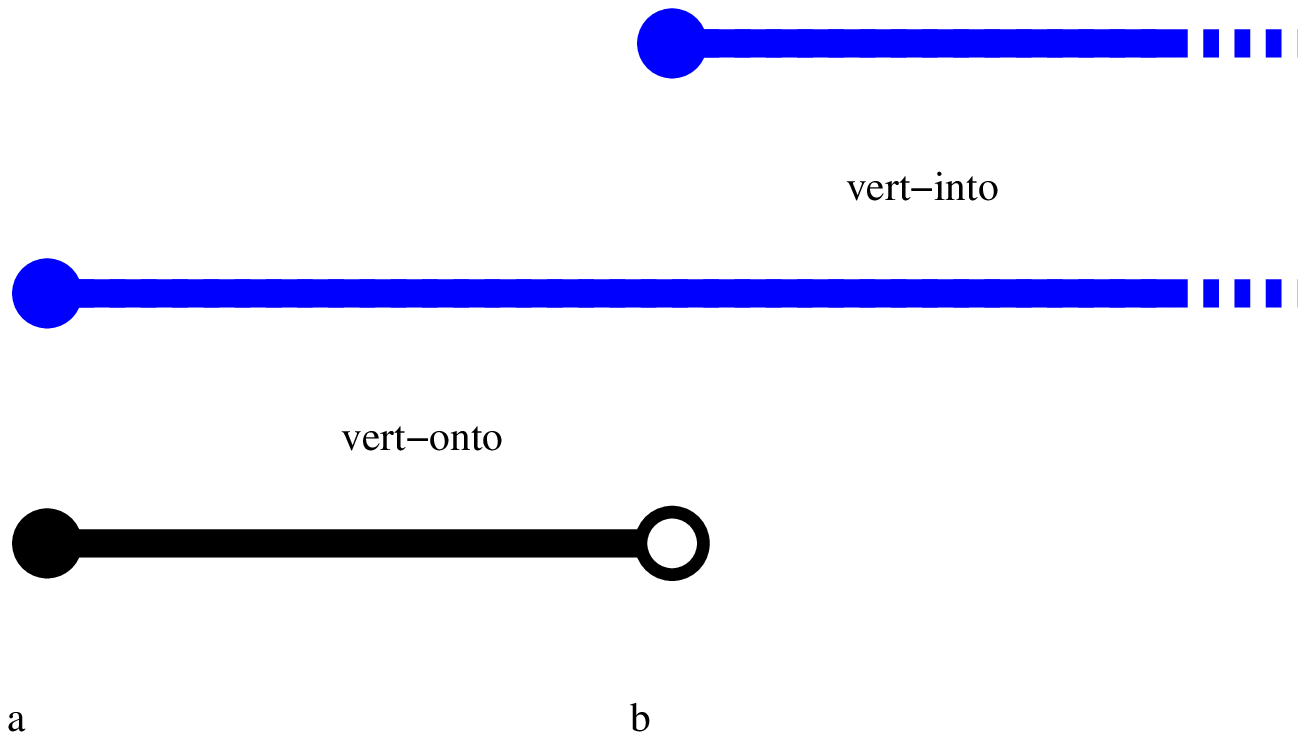}\\[-5ex]
\llap{with cokernel\hspace{10ex}}\\[2ex]
\end{array}
$$
isomorphic to the single bar.  When there are multiple bars, the
bijection from left to right endpoints yields a monomial matrix whose
scalar entries again form an identity matrix, with rows labeled by
positive rays having the specified left endpoints (the ray is the
whole real line when the left endpoint is~$-\infty$) and columns
labeled by positive rays having the corresponding right
endpoints---but with their open or closed nature reversed---as left
endpoints (the ray is empty when the specified right
endpoint~is~$+\infty$).
\end{example}

\begin{example}\label{e:two-param-upset}
$$%
\begin{array}{c}
\\[-1.48ex]
\begin{array}{@{}r@{\hspace{-.4pt}}|@{}l@{}}
\includegraphics[height=25mm]{semialgebraic}&\ \,\hspace{-.3pt}\\[-4.2pt]\hline
\end{array}
\\[-1.52ex]\mbox{}
\end{array}
\ \
\begin{array}{@{}c@{}}
\text{is the cokernel of}\\[2ex]
\end{array}
\qquad
\begin{array}{@{}r@{\hspace{-6.3pt}}|@{}l@{}}
\raisebox{-.2mm}{\includegraphics[height=25mm]{upset-blue}}
&\ \,\\[-4pt]\hline
\end{array}
\,\otni\!
\begin{array}{@{}r@{\hspace{-5.6pt}}|@{}l@{}}
\raisebox{-.4mm}{\includegraphics[height=25mm]{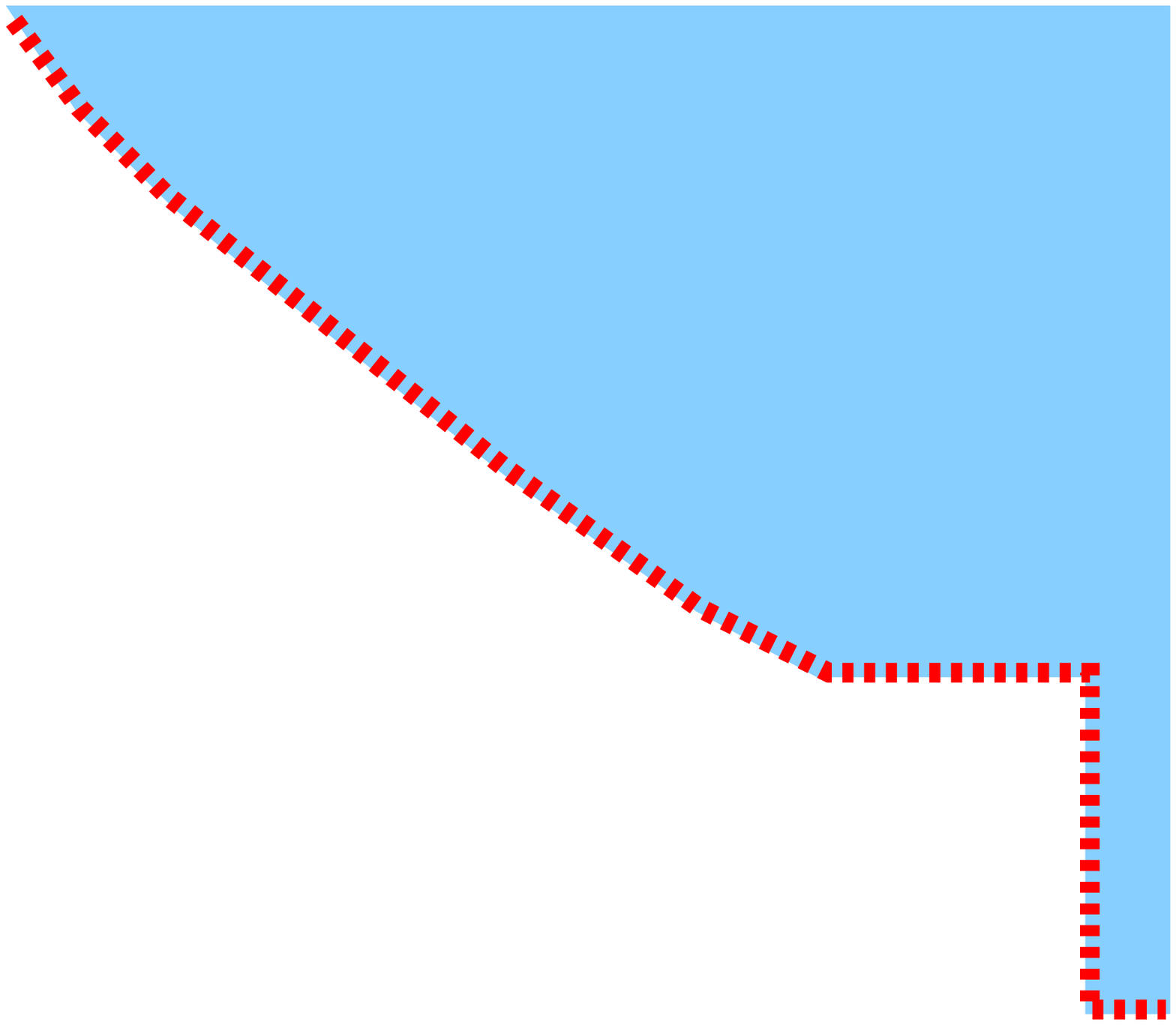}}
&\ \,\hspace{-.2pt}\\[-4pt]\hline
\end{array}
$$
\end{example}

\begin{lemma}\label{l:morphisms}%a-fixed
The homomorphisms in indicator presentations and resolutions are tame,
so their kernels and cokernels are tame.  If the indicator modules in
question are semialgebraic, PL, subanalytic, or of class~$\mathfrak X$
then the morphisms are, as well.
\end{lemma}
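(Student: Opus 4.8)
The plan is to produce, for each structure homomorphism $\phi\colon F\to E$ appearing in a (finite) indicator presentation or resolution, a single finite constant subdivision of~$\cQ$ subordinate to both $F$ and~$E$ on which $\phi$ is ``constant'' in the sense of Definition~\ref{d:tame-morphism}, and then invoke Lemma~\ref{l:ker-coker}. Here $F=\bigoplus_p\kk[U_p]$ is a finite direct sum of upset modules (or of downset modules) and $E=\bigoplus_q\kk[D_q]$ likewise, with each component $\kk[U_p]\to\kk[D_q]$ connected; the upset presentation, downset copresentation, and the mixed (fringe-style) case are all instances of this.

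First I would record two elementary facts. By Example~\ref{e:indicator}, an upset module $\kk[U]$ is the pullback of a module on the two-element chain, so by the easy direction of Theorem~\ref{t:tame} the two-set partition $\{U,\cQ\minus U\}$ is a constant subdivision subordinate to~$\kk[U]$, and dually $\{D,\cQ\minus D\}$ is subordinate to~$\kk[D]$; and a constant subdivision subordinate to each summand of a finite direct sum is, choosing $M_I=\bigoplus_i(\cM_i)_I$ summandwise, subordinate to the direct sum. Now let $\Upsilon$ be the finite set of upsets given by the $U_p$ together with the complements $\cQ\minus D_q$ (complements of downsets being upsets). By Remark~\ref{r:common-refinement} the uptight partition of~$\cQ$ for~$\Upsilon$ is the common refinement of the partitions $\{U_p,\cQ\minus U_p\}$ and $\{D_q,\cQ\minus D_q\}$; by Lemma~\ref{l:subdiv} together with the two facts just recorded it is therefore a constant subdivision subordinate to both~$F$ and~$E$, and it is finite because $\Upsilon$ is (the count is at most $2^{|\Upsilon|}$, as in the proof of Theorem~\ref{t:tame}).

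Next I would check constancy of $\phi$ on an uptight region~$A$. Since $A$ lies wholly inside or wholly outside each~$U_p$ and each~$D_q$, for every $\aa\in A$ the space $F_\aa$ is the sum of exactly the copies of~$\kk$ indexed by $\{p\mid A\subseteq U_p\}$ and $E_\aa$ the sum of those indexed by $\{q\mid A\subseteq D_q\}$, independently of~$\aa$, so these may be taken as $M_A$, $N_A$ with the structure isomorphisms as identities. Because each component $\kk[U_p]\to\kk[D_q]$ of~$\phi$ is connected, it acts as a single scalar $\phi_{pq}$ on all of $U_p\cap D_q$, hence on~$A$ whenever $A\subseteq U_p\cap D_q$; thus the matrix of $\phi_\aa\colon F_\aa\to E_\aa$ is the same for every $\aa\in A$, which is precisely the requirement that $M_A\to M_\aa\to N_\aa\to N_A$ be independent of~$\aa\in A$. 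So $\phi$ is a tame homomorphism, and Lemma~\ref{l:ker-coker} then gives that $\ker\phi$ and $\coker\phi$ are tame modules with tame induced maps. For the geometric versions the same subdivision works: if the summands are semialgebraic, PL, subanalytic, or of class~$\mathfrak X$, then the $U_p$, the $D_q$, and their complements are of that type, and by Remark~\ref{r:iso-uptight} each uptight region is a finite intersection of such sets, hence of the same type by Proposition~\ref{p:auxiliary-hypotheses}.\ref{i:classes} (or by the closure hypotheses of Definition~\ref{d:auxiliary-hypotheses}.\ref{i:classX} for class~$\mathfrak X$); note that no Minkowski sum with~$\cQ_+$ enters, so no boundedness hypothesis is needed even subanalytically.

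I do not anticipate a genuine obstacle. The one point requiring care is conceptual rather than technical: tameness of a homomorphism (Definition~\ref{d:tame-morphism}) demands that source, target, \emph{and} the map between them all be genuinely constant---not merely constant up to isomorphism along relations---on each region, and this is exactly what connectedness of the components $\kk[U_p]\to\kk[D_q]$ buys, ruling out the proliferation of homomorphisms seen in Example~\ref{e:totally-disconnected}. Apart from this bookkeeping, everything reduces to the uptight-region machinery of Section~\ref{sub:uptight} together with Proposition~\ref{p:auxiliary-hypotheses}.
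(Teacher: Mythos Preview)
Your proof is correct and in spirit matches the paper's, though you unpack explicitly what the paper compresses into a single sentence.  The paper simply observes that each connected component $\kk[S]\to\kk[S']$ is tame by Definition~\ref{d:tame-morphism} and then invokes Proposition~\ref{p:abelian-category} (the tame category is abelian, hence closed under finite direct sums and has tame kernels and cokernels); your uptight-region construction is precisely the common-refinement argument hiding inside that proposition, spelled out concretely for this situation and then fed directly into Lemma~\ref{l:ker-coker} rather than through the more packaged Proposition~\ref{p:abelian-category}.  Your explicit treatment has the minor advantage of making transparent why no Minkowski sum with~$\cQ_+$---and hence no compact-support hypothesis in the subanalytic case---is needed here.
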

\begin{proof}
Any connected homomorphism among indicator modules is tame---and
satisfies one of the auxiliary hypotheses, if the source and target
do---by Definition~\ref{d:tame-morphism}, so the conclusion follows
from Proposition~\ref{p:abelian-category}.
\end{proof}

\begin{example}\label{e:puuska-nonconstant-isotypic'}
The poset module in Example~\ref{e:puuska-nonconstant-isotypic} has an
upset presentation
$$%
\begin{array}{ccc}
  &
  \monomialmatrix
	{L\\R\\}
	{\begin{array}{rr}
		 T &  B \\
		 2 &  1 \\
		-1 & -1 \\
	 \end{array}}
	{\\\\}
\\
  \kk[L] \oplus \kk[R]
  & \filleftmap
  & \kk[T]
\end{array}
$$
in which the monomial matrix has row and column labels
\begin{itemize}
\item%
$L$, the upset generated by the leftmost element;
\item%
$R$, the upset generated by the rightmost element;
\item%
$T$, the upset consisting solely of the maximal element depicted on
top; and
\item%
$B$, the upset consisting solely of the maximal element depicted on
the bottom.
\end{itemize}
Although the disjoint union of $T$ and~$B$ is an upset, and there is a
homomorphism $\phi: \kk[T \cup B] \to \kk[L] \oplus \kk[R]$ whose
cokernel is the desired poset module, there is no way to arrange for
the homomorphism~$\phi$ to be connected.
\end{example}

\begin{remark}\label{r:augmentation}
It is tempting to think that a fringe presentation is nothing more
than the concatenation of the augmentation map of an upset resolution
(that is, the surjection at the end) with the augmentation map of a
downset resolution (that is, the injection at the beginning), but
there is no guarantee that the components $F_i \to E_j$ of the
homomorphism thus produced are connected
(Definition~\ref{d:connected-homomorphism}).  In contrast, a flange
presentation (Definition~\ref{d:flange}) is in fact nothing more than
the concatenation of the augmentation maps of a flat resolution and an
injective resolution, since connected homomorphisms are forced by
Lemma~\ref{l:F->E}.
\end{remark}

%%%%%%%%%%%%%%%%%%%%%%%%%%%%%%%%%%%%%%%%%%%%%%%%%%%%%%%%%%%%%%%%%%%%%%%%%
\subsection{Syzygy theorem for modules over posets}\label{sub:syzygy}%%%%

\begin{prop}\label{p:pushforward}
For any inclusion $\iota: \cP \to \cZ$ of posets and
$\cP$-module~$\cH$ there is a $\cZ$-module $\iota_*\cH$, the
\emph{pushforward to~$\cZ$}, whose restriction to~$\iota(\cP)$
is~$\cH$ and is universally repelling: $\iota_*\cH$ has a canonical
map to every $\cZ$-module whose restriction to~$\iota(\cP)$ is~$\cH$.
\end{prop}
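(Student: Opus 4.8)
The plan is to construct $\iota_*\cH$ as the left Kan extension of $\cH$ along~$\iota$, written out in the elementary language of graded vector spaces. For $z \in \cZ$ write $\cP_{\preceq z} = \{p \in \cP : \iota(p) \preceq z\}$, a subposet of~$\cP$, and define $(\iota_*\cH)_z$ to be the colimit over $\cP_{\preceq z}$ of the restricted diagram:
$$
  (\iota_*\cH)_z \;=\; \varinjlim_{p \in \cP_{\preceq z}} \cH_p
  \;=\; \Bigl(\textstyle\bigoplus_{p \in \cP_{\preceq z}} \cH_p\Bigr)
  \Big/ \bigl\langle\, v - \phi_{pp'}(v)\,\bigr\rangle,
$$
where the relations are generated by $v - \phi_{pp'}(v)$ over all $p \preceq p'$ in $\cP_{\preceq z}$ and all $v \in \cH_p$, and $\phi_{pp'}\colon \cH_p \to \cH_{p'}$ is the structure map of~$\cH$. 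For $z \preceq z'$ the inclusion $\cP_{\preceq z} \subseteq \cP_{\preceq z'}$ induces a well-defined linear map $(\iota_*\cH)_z \to (\iota_*\cH)_{z'}$ on these quotients, and functoriality of the colimit in its index poset makes these maps compose correctly along $z \preceq z' \preceq z''$; thus $\iota_*\cH$ is a $\cZ$-module, with no hypotheses of any kind on $\cP$, $\cZ$, or~$\cH$.

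First I would check that the restriction of $\iota_*\cH$ to $\iota(\cP)$ is~$\cH$. Because $\iota$ is an inclusion of posets, $\iota(p') \preceq \iota(p)$ holds if and only if $p' \preceq p$, so $\cP_{\preceq \iota(p)}$ is the principal downset $\{p' \in \cP : p' \preceq p\}$, which has maximum element~$p$. A poset-indexed diagram with a maximum has its colimit computed at that maximum, so the cocone leg $\cH_p \to (\iota_*\cH)_{\iota(p)}$ is an isomorphism; transporting the $\cZ$-module structure maps along these isomorphisms recovers exactly the maps $\phi_{pp'}$ for relations $\iota(p) \preceq \iota(p')$. Hence $(\iota_*\cH)\big|_{\iota(\cP)} \cong \cH$ as $\cP$-modules.

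Next I would establish the universal property. Let $\cN$ be any $\cZ$-module equipped with an identification $\cN\big|_{\iota(\cP)} = \cH$, so $\cN_{\iota(p)} = \cH_p$. Fixing $z \in \cZ$, the structure maps $\cN_{\iota(p)} \to \cN_z$ of~$\cN$, one for each $p \in \cP_{\preceq z}$, form a cocone on the diagram $(\cH_p)_{p \in \cP_{\preceq z}}$: their compatibility with the maps $\phi_{pp'}$ is precisely the commutativity axiom for~$\cN$. The universal property of the colimit then yields a unique linear map $(\iota_*\cH)_z \to \cN_z$ through which all of these factor. As $z$ varies, these maps commute with the structure maps of $\iota_*\cH$ and of~$\cN$ — for $z \preceq z'$ the two composites $(\iota_*\cH)_z \to \cN_{z'}$ agree on the image of every $\cH_p$, hence coincide — so they assemble into a $\cZ$-module homomorphism $\iota_*\cH \to \cN$ that restricts to the identity on $\iota(\cP)$. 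The uniqueness of each $(\iota_*\cH)_z \to \cN_z$ as a cocone factorization, combined with the previous paragraph, makes this the canonical such map and pins down $\iota_*\cH$ up to unique isomorphism.

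The only genuinely delicate point — the one I would take care to state cleanly — is that the colimits over the posets $\cP_{\preceq z}$, which need not be directed, nonetheless behave as needed: functoriality in the index poset, so that the $\cZ$-module structure maps of $\iota_*\cH$ are well defined and associative, and collapse of the colimit to the value at a maximum when one exists. Both are standard facts about colimits of $\kk$-vector-space-valued functors — the first because an inclusion of index posets sends the presenting generators and relations forward, the second because the leg from the top object is split by the remaining legs — and once they are recorded, every other assertion in the proposition is a formal consequence.
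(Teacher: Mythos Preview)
Your proposal is correct and takes essentially the same approach as the paper: both place at $z$ the colimit of $\cH$ over $\cP_{\preceq z}$ and invoke the universal property of colimits. The paper's proof is a two-sentence sketch of exactly this construction, whereas you have filled in the verifications (restriction via the maximum element, naturality in~$z$, and the cocone argument for the universal map) that the paper leaves implicit.
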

\begin{proof}
At $z \in \cZ$ the pushforward $\iota_*\cH$ places the colimit
$\dirlim\cH_{\preceq z}$ of the diagram of vector spaces indexed by
the elements of~$\cP$ whose images precede~$z$.  The universal
pro\-perty of colimits implies that $\iota_*\cH$ is a $\cZ$-module
with the desired universal property.
\end{proof}

\begin{remark}\label{r:kan-extension}
With perspectives as in Remark~\ref{r:curry}, the pushforward is a
left Kan extension \cite[Remark~4.2.9]{curry-thesis}.  This instance
is a special case of~\cite[Example~4.4]{curry2019}.
\end{remark}

\pagebreak

\begin{thm}[Syzygy theorem]\label{t:syzygy}
A module~$\cM$ over a poset~$\cQ$ is tame if and only if it admits
one, and hence all, of the following:
\begin{enumerate}
\item\label{i:syzygy-tame}%
a finite constant subdivision of~$\cQ$ subordinate to~$\cM$; or

\item\label{i:syzygy-encoding}%
a finite poset encoding subordinate to~$\cM$; or

\item\label{i:fringe}%
a finite fringe presentation; or

\item\label{i:upset-presentation}%
a finite upset presentation; or

\item\label{i:downset-copresentation}%
a finite downset copresentation; or

\item\label{i:upset-res}%
a finite upset resolution; or

\item\label{i:downset-res}%
a finite downset resolution; or

\item\label{i:dominating}%
any of the above dominating any given finite encoding; or

\item\label{i:subordinate-encoding}%
a finite encoding subordinate to any given one of
items~\ref{i:syzygy-tame}--\ref{i:downset-res}; or

\item\label{i:subordinate-constant}%
a finite constant subdivision subordinate to any given one of
items~\ref{i:syzygy-tame}--\ref{i:downset-res}.
\end{enumerate}
The statement remains true over any subposet of a real partially
ordered group if ``tame'' and all occurrences of ``finite'' are
replaced by ``semialgebraic'', ``PL'', or ``class~$\mathfrak X$''.
Moreover, any tame or semialgebraic, PL, or class~$\mathfrak X$
morphism $\cM \to \cM'$ lifts to a similarly well behaved morphism of
presentations or resolutions as in
parts~\ref{i:fringe}--\ref{i:downset-res}.  All of these results
except item~\ref{i:subordinate-encoding} hold in the subanalytic case
if~$\cM$ has compact support.
\end{thm}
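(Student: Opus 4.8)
The plan is to reduce everything to the finitely determined case over~$\ZZ^n$ (Theorem~\ref{t:finitely-determined}) and transport the resulting flat and injective resolutions back along a poset encoding. The easy implications come first: each of parts~\ref{i:fringe}--\ref{i:downset-res} forces~$\cM$ to be tame, since an upset or downset module carries an obvious two-region constant subdivision (Example~\ref{e:indicator}), a connected homomorphism of indicator modules is tame, and by Lemma~\ref{l:morphisms} together with Proposition~\ref{p:abelian-category} the kernel, cokernel, or image computing~$\cM$ from the given \emph{finite} indicator data is tame---one simply passes to the common refinement of the finitely many subdivisions in sight. Combined with Theorem~\ref{t:tame}.\ref{i:admits-finite-encoding} this already yields the equivalence of~\ref{i:syzygy-tame}, \ref{i:syzygy-encoding}, and ``some finite indicator data exists''.

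For the reverse direction, assume~$\cM$ tame and fix a finite encoding $\pi\colon\cQ\to\cP$ subordinate to~$\cM$ with finite-dimensional $\cP$-module~$\cH$ (Theorem~\ref{t:tame}). The crucial step is to embed~$\cP$ into~$\ZZ^n$: any finite poset embeds order-preservingly and order-reflectingly into a product of finite chains---send $p$ to the $0/1$-tuple recording which elements precede it---so choose such an embedding $\iota\colon\cP\into\ZZ^n$ with image in a bounded box. Form the pushforward $\iota_*\cH$ (Proposition~\ref{p:pushforward}); because $\iota(\cP)$ is finite, the colimit defining $(\iota_*\cH)_z$ ranges over a subdiagram of~$\cH$ that stops changing once any coordinate of~$z$ exits the box, so $\iota_*\cH$ is $\ZZ^n$-finite and, crucially, \emph{finitely determined} (Definition~\ref{d:determined}); and $\iota^*\iota_*\cH\cong\cH$ gives $(\iota\circ\pi)^*(\iota_*\cH)\cong\pi^*\cH\cong\cM$. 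Theorem~\ref{t:finitely-determined} now furnishes finite (indeed minimal) flat and injective resolutions and a finite flange presentation of~$\iota_*\cH$. Pullback along~$\iota\circ\pi$ is exact (computed degreewise), sends indecomposable flat $\ZZ^n$-modules to upset modules and indecomposable injectives to downset modules (Example~\ref{e:pullback}), and sends a homomorphism of two such to one acting by the same scalar on the pulled-back overlap of degree sets, hence connected (Definition~\ref{d:connected-homomorphism}, via Lemma~\ref{l:F->E} and Corollary~\ref{c:U->D}). Thus the pullbacks are a finite upset resolution, a finite downset resolution, and a finite fringe presentation of~$\cM$ (Propositions~\ref{p:pullback-monomial-matrix} and~\ref{p:syzygy-monomial-matrix}); truncating a resolution to its first differential gives the corresponding upset presentation or downset copresentation. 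Since each indicator summand so produced is the preimage under~$\iota\circ\pi$ of an upset or downset of~$\ZZ^n$, hence a union of fibers of~$\pi$, this indicator data dominates~$\pi$; taking~$\pi$ to be any prescribed finite encoding settles part~\ref{i:dominating}, while parts~\ref{i:subordinate-encoding} and~\ref{i:subordinate-constant} follow after first replacing~$\pi$ by its common refinement with the prescribed encoding/subdivision or with the length-one chain encodings (Example~\ref{e:indicator}) of the summands of the given data, using Lemmas~\ref{l:subdiv} and~\ref{l:constant} and the product-of-encodings trick in the proof of Proposition~\ref{p:abelian-category}.

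For the geometric variants, start instead from the uptight encoding of Theorem~\ref{t:tame}.\ref{i:auxiliary-uptight}, whose fibers are semialgebraic, PL, subanalytic, or of class~$\mathfrak X$ (the subanalytic case needing~$\cM$ to have compact support). As~$\iota$ is injective, the fibers of~$\iota\circ\pi$ are exactly those of~$\pi$, and each indicator summand of the transported data is a union of finitely many of them, hence again of the prescribed type by closure under finite unions (Proposition~\ref{p:auxiliary-hypotheses}, Definition~\ref{d:auxiliary-hypotheses}); so the data dominates a constant subdivision of that type and is therefore of that type. Likewise, a tame (resp.\ semialgebraic, PL, subanalytic, class~$\mathfrak X$) homomorphism $\cM\to\cM'$ is encoded---by Definition~\ref{d:tame-morphism} and the argument of Proposition~\ref{p:abelian-category}---by a single poset morphism $\pi\colon\cQ\to\cP$ together with a $\cP$-module map $\cH\to\cH'$; push this forward along~$\iota$ (a functor, being a left Kan extension, Remark~\ref{r:kan-extension}) to a map of finitely determined $\ZZ^n$-modules, lift it to a map of finite injective resolutions by the usual comparison theorem and to a map of finite flat resolutions by the Matlis dual statement (Proposition~\ref{p:duality}), and pull back along~$\iota\circ\pi$. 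The lone exception is the subanalytic case of part~\ref{i:subordinate-encoding}, where the common-refinement/product construction can destroy boundedness---precisely the caveat in the statement.

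I expect the main obstacle to be the embedding of~$\cP$ into~$\ZZ^n$: arranging that the pushforward~$\iota_*\cH$ is not merely $\ZZ^n$-finite but \emph{finitely determined}, so that Theorem~\ref{t:finitely-determined} applies verbatim. Confining~$\iota(\cP)$ to a bounded box is what makes the defining colimits stabilize in each coordinate direction; pinning that stabilization down precisely, and then checking that the round trip through~$\ZZ^n$ preserves both connectedness of homomorphisms and the auxiliary geometric class (via closure under finite unions and, for subanalytic, compact support), is where the genuine work lies.
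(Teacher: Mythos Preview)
Your proposal is correct and follows essentially the same route as the paper: establish tame $\Leftrightarrow$ items~\ref{i:syzygy-tame}, \ref{i:syzygy-encoding} via Theorem~\ref{t:tame}; obtain items~\ref{i:subordinate-encoding} and~\ref{i:subordinate-constant} from indicator data by common refinement of the two-region subdivisions of the summands; and produce items~\ref{i:fringe}--\ref{i:dominating} by embedding the finite encoding poset~$\cP$ into~$\ZZ^n$, pushing forward to a finitely determined module, applying Theorem~\ref{t:finitely-determined}, and pulling back along~$\iota\circ\pi$ (with connectedness inherited via Corollary~\ref{c:U->D}). Your explicit $0/1$-tuple embedding and your verification that $\iota^*\iota_*\cH\cong\cH$ (using that $\iota$ is order-reflecting, so each colimit has a terminal object) spell out steps the paper leaves implicit, and your identification of the finitely-determined check on~$\iota_*\cH$ as the crux is accurate.
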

\begin{proof}
Tame is equivalent to item~\ref{i:syzygy-tame} without auxiliary
hypotheses by Definition~\ref{d:tame} and with auxiliary hypotheses by
Definition~\ref{d:auxiliary-hypotheses}.  Tame is equivalent to
item~\ref{i:syzygy-encoding} by
Theorem~\ref{t:tame}.\ref{i:admits-finite-encoding}.  With auxiliary
hypotheses, \ref{i:syzygy-tame} $\implies$ \ref{i:syzygy-encoding} by
Theorem~\ref{t:tame}.\ref{i:auxiliary-uptight}; to apply that result
in the subanalytic case starting from an arbitrary subanalytic finite
constant subdivision subordinate to~$\cM$, construct a compact such
subdivision by keeping the bounded constant regions as they are and
taking the union of all unbounded constant regions to get a single
unbounded one.  The implication \ref{i:syzygy-encoding} $\implies$
\ref{i:syzygy-tame} holds because the fibers of the encoding poset
morphism form a constant subdivision of the relevant type.

The necessity to construct an auxiliary compact subdivision from the
given one is the reason to exclude item~\ref{i:subordinate-encoding}
from the subanalytic case, as the upcoming argument produces constant
subdivisions, not directly encodings.  For all of the other cases,
item~\ref{i:subordinate-encoding} proceeds via
item~\ref{i:subordinate-constant}, given the uptight constructions in
the previous paragraph.  For item~\ref{i:subordinate-constant}, to
produce a subordinate finite constant subdivision given a finite
fringe presentation, take the common refinement of the canonical
constant subdivision subordinate to each of its indicator summands.
The same construction works if indicator presentations or resolutions
are given, and it preserves auxiliary hypotheses by
Proposition~\ref{p:auxiliary-hypotheses}.\ref{i:classes}.

What remains is item~\ref{i:dominating}: a finitely encoded
$\cQ$-module~$\cM$ has finite upset and downset resolutions and
(co)presentations, as well as a finite fringe presentation, all
dominating the given encoding.  (As noted in the first paragraph, the
fibers of the encoding morphism are already a constant subdivision of
the relevant type.)  The domination takes care of the cases with
auxiliary hypotheses by
Definitions~\ref{d:fringe}.\ref{i:auxiliary-fringe},
\ref{d:subordinate-encoding}.\ref{i:auxiliary-encoding},
\ref{d:resolutions}.\ref{i:auxiliary-resolution},
and~\ref{d:indicator-(co)presentation}.

Fix a $\cQ$-module~$\cM$ finitely encoded by a poset morphism $\pi:
\cQ \hspace{-.2ex}\to\hspace{-.2ex} \cP$ and
\mbox{$\cP$-module}~$\cH$.  The finite poset~$\cP$ has order
dimension~$n$ for some positive integer~$n$; as such $\cP$ has an
embedding $\iota: \cP \into \ZZ^n$.  The pushforward $\iota_*\cH$
(Proposition~\ref{p:pushforward}) is finitely determined
(Definition~\ref{d:determined}; see also
Example~\ref{e:convex-projection}) as it is pulled back from any box
containing~$\iota(\cP)$.  The desired presentation or resolution is
pulled back to~$\cQ$ (via $\iota \circ \pi: \cQ \to \ZZ^n$) from the
corresponding flange, flat, or injective presentation or resolution
of~$\iota_*\cH$ afforded by Theorem~\ref{t:finitely-determined}.
These pullbacks are finite indicator resolutions of~$\cM$
dominating~$\pi$ by Example~\ref{e:pullback} and
Lemma~\ref{l:constant}.  The component homomorphisms are connected
because, by Corollary~\ref{c:U->D} and Example~\ref{e:connected-poset}
(see Definition~\ref{d:connected-poset}), components of flange
presentations, flat resolutions, and injective resolutions
over~$\ZZ^n$ are~\mbox{automatically}~\mbox{connected}.

The preceding argument proves the claim about a morphism $\cM \to
\cM'$, as well, since
\begin{itemize}
\item%
only one poset morphism is required to encode the morphism $\cM \to
\cM'$;
\item%
the push-pull constructions are functorial; and
\item%
morphisms of finitely determined modules can be lifted to the relevant
presentations and resolutions, since the relevant covers,
presentations, and resolutions are free or injective in the category
of finitely determined modules.\qedhere
\end{itemize}
\end{proof}

% \begin{example}\label{e:wing-fringe}
% The persistent homology of a fly wing
% (Example~\ref{e:fly-wing-filtration}) admits a finite constant
% subdivision of $\RR^2$ into semialgebraic sets.  Such a module
% therefore has a finite fringe presentation subordinate to a
% semialgebraic encoding by Theorems~\ref{t:tame} and~\ref{t:syzygy}.
% \comment{this remark seriously needs proof; probably there is an
% entire paper's worth of material here, especially if the proof is
% constructive---which it ought to be!}
% \end{example}

\begin{remark}
Comparing Theorems~\ref{t:syzygy} and~\ref{t:finitely-determined},
what happened to minimality?  It is not clear in what generality
minimality can be characterized.  The sequel \cite{essential-real} to
this paper can be seen as a case study for posets arising from abelian
groups that are either finitely generated and free
% (the closed discrete polyhedral case)
or real vector spaces of finite dimension.
% (the closed real polyhedral case).
The answer is much more nuanced in the real case, obscuring how
minimality might generalize beyond these~cases.
\end{remark}

\begin{remark}\label{r:pullback-twice}
In the situation of the proof of Theorem~\ref{t:syzygy}, composing two
applications of Proposition~\ref{p:pullback-monomial-matrix}---one for
the encoding $\pi: \cQ \to \cP$ and one for the embedding $\iota: \cP
\into \ZZ^n$---yields a monomial matrix for a fringe presentation
of~$\cM$ directly from a monomial matrix for a flange presentation.
\end{remark}

\begin{remark}\label{r:RRn-mod}
Lesnick and Wright consider \mbox{$\RR^n$-modules}
\cite[\S2]{lesnick-wright2015} in finitely presented cases.  As
they indicate,
% by their Proposition~2.9,
homological algebra of such $\RR^n$-modules is no different than
finitely generated \mbox{$\ZZ^n$-modules}.  This can be seen by finite
encoding: any finite poset in~$\RR^n$ is embeddable in~$\ZZ^n$,
because a product of finite chains is all that~is~needed.
\end{remark}

%%%%%%%%%%%%%%%%%%%%%%%%%%%%%%%%%%%%%%%%%%%%%%%%%%%%%%%%%%%%%%%%%%%%%%%%%
\subsection{Syzygy theorem for complexes of modules}\label{sub:complexes}\mbox{}

\noindent
Theorem~\ref{t:syzygy} is stated for individual modules, but the proof
works just as well for complexes, in a sense recorded here for
reference in the proof of a version in the language of derived
categories of constructible sheaves \cite[Theorem~4.5]{strat-conical}.

\begin{defn}\label{d:tame-complex}
Fix a complex $M^\spot$ of modules over a poset~$\cQ$.
\begin{enumerate}
\item%
$\cM^\spot$ is \emph{tame} if its modules and morphisms are tame
(Definitions~\ref{d:tame} and~\ref{d:tame-morphism}).

\item%
A constant subdivision or poset encoding is \emph{subordinate}
to~$\cM^\spot$ if it is subordinate to all of the modules and
morphisms therein, and in that case $\cM^\spot$ is said to
\emph{dominate} the subdivision or encoding.

\item%
An \emph{upset resolution} of~$\cM^\spot$ is a complex of
$\cQ$-modules in which each $F_i$ is a direct sum of upset modules and
the components $\kk[U] \to \kk[U']$ are connected, with a homomorphism
$F^\spot \to \cM^\spot$ of complexes inducing an isomorphism on
homology.

\item%
A \emph{downset resolution} of~$\cM^\spot$ is a complex of
$\cQ$-modules in which each $E_i$ is a direct sum of downset modules
and the components $\kk[D] \to \kk[D']$ are connected, with a
homomorphism $\cM^\spot \to E^\spot$ of complexes inducing an
isomorphism on homology.
\end{enumerate}
These resolutions are \emph{finite}, or \emph{dominate} a constant
subdivision or encoding, or are \emph{semialgebraic}, \emph{PL},
\emph{subanalytic}, or \emph{of class~$\mathfrak X$} as in
Definition~\ref{d:resolutions}.
\end{defn}

\begin{thm}[Syzygy theorem for complexes]\label{t:syzygy-complexes}
Theorem~\ref{t:syzygy} holds verbatim for a bounded complex $M^\spot$
in place of the module~$M$ as long as items~\ref{i:fringe},
\ref{i:upset-presentation}, and \ref{i:downset-copresentation} are
ignored.
\end{thm}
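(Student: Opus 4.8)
The plan is to mimic the proof of Theorem~\ref{t:syzygy}, observing that every step there was really a statement about modules and morphisms that applies diagram-by-diagram to a bounded complex. First I would address the equivalences among the ``tameness'' conditions. The equivalence of tameness of $\cM^\spot$ (in the sense of Definition~\ref{d:tame-complex}) with admitting a subordinate finite constant subdivision, and with admitting a subordinate finite encoding, reduces immediately to the module-and-morphism case: a constant subdivision subordinate to a complex is by definition one subordinate to each module and each differential, and by Theorem~\ref{t:tame}.\ref{i:admits-finite-encoding} together with Definition~\ref{d:tame-morphism} these amount to the existence of a single finite encoding $\pi\colon\cQ\to\cP$ through which every module and differential factors. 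The common-refinement trick used for item~\ref{i:subordinate-constant} of Theorem~\ref{t:syzygy} works verbatim: given finitely many modules and morphisms, each with a subordinate finite constant subdivision (or encoding via $\pi_j$), take the common refinement (or the product morphism $\prod_j\pi_j$), which is still finite and still of the relevant auxiliary type by Proposition~\ref{p:auxiliary-hypotheses}.\ref{i:classes} and the fact that products of maps with good fibers have good fibers.

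The substantive content is item~\ref{i:dominating}: a finitely encoded bounded complex $\cM^\spot$ admits finite upset and downset resolutions dominating the given encoding. Here I would follow the proof of Theorem~\ref{t:syzygy} exactly. Fix an encoding $\pi\colon\cQ\to\cP$ and a complex $\cH^\spot$ of finite-dimensional $\cP$-modules with $\cM^\spot\cong\pi^*\cH^\spot$. Embed $\iota\colon\cP\into\ZZ^n$ where $n$ is the order dimension of~$\cP$, and push forward to get the bounded complex $\iota_*\cH^\spot$ of finitely determined $\ZZ^n$-modules (each term is finitely determined by Proposition~\ref{p:pushforward} and the argument in the proof of Theorem~\ref{t:syzygy}, being pulled back from a box). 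Now over $\ZZ^n$ one builds a flat resolution of the complex $\iota_*\cH^\spot$ in the usual way --- a Cartan--Eilenberg-style resolution, or simply a termwise flat resolution assembled into a double complex and totalized --- using that each finitely determined $\ZZ^n$-module has a finite flat resolution (Theorem~\ref{t:finitely-determined}) of length at most~$n$; since the complex is bounded, the total complex is a bounded complex of finite direct sums of indecomposable flats, mapping to $\iota_*\cH^\spot$ by a quasi-isomorphism. Dually for injectives. Pull this back along $\iota\circ\pi\colon\cQ\to\ZZ^n$: by Example~\ref{e:pullback} the pullbacks of indecomposable flats are upset modules and of indecomposable injectives are downset modules, pullback is exact so it preserves the quasi-isomorphism, the summands are constant on fibers of $\pi$ hence dominate the encoding by Lemma~\ref{l:constant}, and the component maps are connected by Corollary~\ref{c:U->D} and Example~\ref{e:connected-poset} since $\ZZ^n$ is strongly connected. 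The auxiliary-hypothesis versions follow because the whole construction factors through $\pi$, whose fibers are of the stipulated type, exactly as in Theorem~\ref{t:syzygy}; and the morphism-lifting clause is handled by the same three bullet points (one poset morphism encodes a morphism of complexes, push-pull is functorial, and flat/injective resolutions of complexes of finitely determined modules accept lifts of chain maps up to homotopy).

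The reason items~\ref{i:fringe}, \ref{i:upset-presentation}, and~\ref{i:downset-copresentation} must be dropped is that ``presentation'' and ``copresentation'' and ``fringe presentation'' are inherently two-term (or three-term) notions describing a single module as an image, kernel, or cokernel, and there is no sensible analogue for a complex with homology in several degrees; only genuine resolutions ($F^\spot\to\cM^\spot$ or $\cM^\spot\to E^\spot$, quasi-isomorphisms) make sense. The main obstacle I anticipate is purely bookkeeping rather than conceptual: assembling the termwise flat (resp. injective) resolutions over $\ZZ^n$ into an honest complex quasi-isomorphic to $\iota_*\cH^\spot$, i.e. the Cartan--Eilenberg totalization, and then checking that after pulling back to $\cQ$ the resulting differentials still have connected components in each monomial-matrix entry --- but this last point is automatic over $\ZZ^n$ by Corollary~\ref{c:U->D}.\ref{i:kk}, so the obstacle really dissolves. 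Everything else is a verbatim transcription of the proof of Theorem~\ref{t:syzygy}, which is why the statement can be asserted to ``hold verbatim.''
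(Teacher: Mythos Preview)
Your proposal is correct and follows essentially the same approach as the paper: the proof of Theorem~\ref{t:syzygy} transfers verbatim, with the only additional ingredient being that bounded complexes of finitely determined $\ZZ^n$-modules admit finite flat and injective resolutions via a standard Cartan--Eilenberg construction (the paper phrases this as ``any of the standard constructions'' plus Matlis duality, Remark~\ref{r:flat}, to pass between the injective and flat sides). Your treatment is more explicit about the bookkeeping, but the substance is identical.
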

\begin{proof}
As already noted, the proof is the same.  It bears mentioning that
finite injective and flat resolutions of complexes exist in the
category of finitely determined $\ZZ^n$-modules because finite
injective resolutions do (Proposition~\ref{p:determined}): any of the
standard constructions that produce injective resolutions of complexes
given that modules have injective resolutions works in this setting,
and then Matlis duality (Definition~\ref{d:matlis}) produces finite
flat resolutions (see Remark~\ref{r:flat}).
\end{proof}

%%%%%%%%%%%%%%%%%%%%%%%%%%%%%%%%%%%%%%%%%%%%%%%%%%%%%%%%%%%%%%%%%%%%%%%%%
\addtocontents{toc}{\protect\setcounter{tocdepth}{2}}%%%%%%%%%%%%%%%%%%%%
%%%%%%%%%%%%%%%%%%%%%%%%%%%%%%%%%%%%%%%%%%%%%%%%%%%%
%%%%%%%%%%%%%%%%%%%%%%%%%%%%%%%%%%%%%%%%%%%%%%%%%%%%%%%%%%%%%%%%%%%%%%%%%

%addtocontents{toc}{\vspace{-5ex}}%%%%%%%%%%%%%%%%%%%%%%%%%%%%%%%%%%%%%%%
%newpage
\vspace{-1.8ex}

%%%%%%%%%%%%%%%%%%%%%%%%%%%%%%%%%%%%%%%%%%%%%%%%%%%%%%%%%%%%%%%%%%%%%%%%%

\begin{thebibliography}{CLR$^{+\!}$15}%%%%%%%%%%%%%%%%%%%%%%%%%%%%%%%%%%%
\raggedbottom%%%%%%%%%%%%%%%%%%%%%%%%%%%%%%%%%%%%%%%%%%%%%%%%%%%%%%%%%%%%

\bibitem[AD80]{abeasis-delFra1980}
Silvana Abeasis and Alberto Del\thinspace{}\thinspace{}Fra,
  \emph{Degenerations for the representations of an equioriented
  quiver of type $A_m$}, Boll. Un. Mat. Ital. Suppl. (1980) no. 2,
  157--171.

\bibitem[ADK81]{abeasis-delFra-kraft1981}
Silvana Abeasis, Alberto Del\thinspace{}\thinspace{}Fra, and
  Hanspeter Kraft, \emph{The geometry of representations of $A_m$},
  Math. Ann. \textbf{256} (1981), no.\,3, 401--418.

\bibitem[ASW15]{andersen--sather-wagstaff2015}
Zechariah Andersen and Sean Sather-Wagstaff, \emph{Krull dimension of
  monomial ideals in polynomial rings with real exponents},
  Comm. in Algebra \textbf{43} (2015), no.\,8, 3411--3432.

\bibitem[BH11]{bendich-harer2011}
Paul Bendich and John Harer, \emph{Persistent intersection
  homology}, Found. Comput. Math. \textbf{11} (2011), no.\,3,
  305–-336.

% \bibitem[BP19]{berkouk-petit2019}
% Nicolas Berkouk and Fran\,cois Petit, \emph{Ephemeral persistence
%   modules and distance comparison}, preprint, 2019.\ \
%   \textsf{arXiv:math.AT/1902.09933v1}

% \bibitem[Bir42]{birkhoff42}
% Garrett Birkhoff, \emph{Lattice-ordered groups}, Annals of
%   Math.~\textbf{43} (1942), 298--331.

% \bibitem[Bla07]{blair07}
% Seth S.\,Blair, \emph{Wing vein patterning in} \textsl{Drosophila}
%   \emph{and the analysis of intercellular signaling}, The Annual
%   Review of Cell and Developmental Biology \textbf{23} (2007),
%   293--319.
% % DOI 10.1146/annurev.cellbio.23.090506.123606

\bibitem[BC19]{botnan-crawley-boevey2018}
Magnus Botnan and William Crawley-Boevey, \emph{Decomposition of
  persistence modules}, preprint, 2019.\ \
  \textsf{arXiv:math.RT/1811.08946v2}

\bibitem[BE20]{buchet-escolar2020}
Micka\"el Buchet and Emerson
% G.
  Escolar, \emph{Every 1D persistence module is a restriction of some
  indecomposable 2D persistence module}, Journal of Applied and
  Computational Topology \textbf{4} (2020), 387--424.
% https://doi.org/10.1007/s41468-020-00053-z
  doi:10.1007/s41468-020-00053-z\ \
  \textsf{arXiv:math.RT/1902.07405}

\bibitem[CSZ09]{computMultiPH}
Gunnar Carlsson, Gurjeet Singh, and Afra Zomorodian,
  \emph{Computing multidimensional persistence}, Algorithms and
  computation, Lecture Notes in Computer Science Vol.\,5878, Springer,
  Berlin, 2009, pp.\,730--739.

\bibitem[CZ09]{multiparamPH}
Gunnar Carlsson and Afra Zomorodian, \emph{The theory of
  multidimensional persistence}, Dis- crete and Computational Geometry
  \textbf{42} (2009), 71--93.
% DOI 10.1007/s00454-009-9176-0

\bibitem[CSV12]{csv12}
Wojciech Chach\'olski, Martina Scolamiero, and Francesco Vaccarino,
  \emph{Combinatorial resolutions of multigraded modules and
  multipersistent homology}, preprint, 2012.
  \textsf{arXiv:math.AT/ 1206.1819v1}

\bibitem[CSV17]{csv17}
Wojciech Chach\'olski, Martina Scolamiero, and Francesco Vaccarino,
  \emph{Combinatorial presentation of multidimensional persistent
  homology},
% preprint, 2014.  \textsf{arXiv:math.AT/ 1409.7936v1}
  J.\,Pure Appl.\,Alg.\ \textbf{221} (2017), 1055--1075.

\bibitem[CL18]{chambers-letscher2018}
Erin Wolf Chambers and David Letscher, \emph{Persistent homology over
  directed acyclic graphs}, Res.\ in Comput.\ Topology,
  Assoc. Women Math. Ser., Vol.\,13, Springer,
% Cham,
  2018, pp.\,11--32.%\\
% MR3904999

\bibitem[CdS$^{+\!}$16]{chazal-deSilva-glisse-oudot2016}
Fr\'ed\'eric Chazal, Vin de Silva, Marc Glisse, and Steve Oudot,
  \emph{The structure and stability of persistence modules}, Springer
  Briefs in Mathematics.  Springer,
% [Cham],
  2016.
% x+120 pp.
% ISBN: 978-3-319-42543-6; 978-3-319-42545-0
% MR3524869

% \bibitem[Cli40]{clifford40}
% Alfred H. Clifford, \emph{Partially ordered abelian groups}, Annals of
%   Math.~\textbf{41} (1940), 465--473.

\bibitem[Coa03]{coanda2003}
Iustin Coand\v a, \emph{On the Bernstein--Gelfand--Gelfand
  correspondence and a result of Eisenbud, Fl\o ystad, and Schreyer},
  J. Math. Kyoto Univ. \textbf{43} (2003), no.\,2, 429--439. 

\bibitem[CEH07]{cohenSteiner-edelsbrunner-harer2007}
David Cohen-Steiner, Herbert Edelsbrunner, and John Harer,
  \emph{Stability of persistence diagrams}, Discrete
  Comput. Geom. \textbf{37} (2007), no.\,1, 103--120.

\bibitem[Cra13]{crawley-boevey2015}
William Crawley-Boevey, \emph{Decomposition of pointwise
  finite-dimensional persistence mod- ules}, J. Algebra
  Appl. \textbf{14} (2015), no.\,5, 1550066, 8~pp.
  \textsf{arXiv:math.AT/1210.0819}

\bibitem[Cur14]{curry-thesis}
Justin Curry, \emph{Sheaves, cosheaves, and applications},
  Ph.D.\ thesis, University of Pennsylva- nia, 2014.
  \textsf{arXiv:math.AT/1303.3255v2}

\bibitem[Cur19]{curry2019}
Justin Curry, \emph{Functors on posets left Kan extend to cosheaves:
  an erratum}, preprint, 2019.\ \ \textsf{arXiv:math.CT/1907.09416v1}

\bibitem[DRS72]{doubilet-rota-stanley1972}
Peter Doubilet, Gian-Carlo Rota, and Richard Stanley, \emph{On the
  foundations of combinatorial theory (VI): The idea of generating
  function}, Proceedings of the Sixth Berkeley Symposium on
  Mathematical Statistics and Probability (Univ. California, Berkeley,
  1970/1971), Vol.\,II: Probability theory, pp.\,267--318,
  Univ. California Press, Berkeley, CA, 1972.
% MR0403987

\bibitem[EMO20]{sylvan-resolution}
John Eagon, Ezra Miller, and Erika Ordog,
  \emph{Minimal resolutions of monomial ideals},
  30~pages, submitted.\ \ \textsf{arXiv:math.AC/1906.08837v2}
% https://arxiv.org/abs/1906.08837

\bibitem[EH10]{edelsbrunner-harer2010}
Herbert Edelsbrunner and John L. Harer, \emph{Computational topology:
  an introduction}, American Mathematical Society, Providence, RI, 2010.

\bibitem[ELZ02]{edelsbrunner-letscher-zomorodian2002}
Herbert Edelsbrunner, David Letscher, and Afra Zomorodian,
  \emph{Topological persistence and simplification}, Discrete and
  computational geometry and graph drawing (Columbia, SC, 2001),
  Discrete Comput. Geom. \textbf{28} (2002), no.\,4, 511--533.
% MR1949898 (2003m:52019)

% \bibitem[GC17]{gafvert-chacholski2017}
% Oliver G\"afvert and Wojciech Chach\'olski, \emph{Stable invariants
%   for multidimensional persistence}, preprint, 2017.
%   \textsf{arXiv:math.AT/ 1206.1819v1}

\bibitem[Ghr08]{ghrist2008}
Robert Ghrist, \emph{Barcodes: the persistent topology of data},
  Bull. Amer. Math. Soc. (N.S.) \textbf{45} (2008), no.\,1, 61--75.
% MR2358377 (2008i:55007)

% \bibitem[Goo86]{goodearl86}
% Kenneth R. Goodearl, \emph{Partially Ordered Abelian Groups with
%   Interpolation}, Mathematical Surveys and Monographs, Vol.\,20,
%   American Mathematical Society, Providence, 1986.

\bibitem[GW78]{goto-watanabe1978}
Shiro Goto and Keiichi Watanabe, \emph{On graded rings, II
  ($\ZZ^n$-graded rings)}, Tokyo J. Math. \textbf{1} (1978), no.\,2,
  237--261.

\bibitem[HOST19]{harrington-otter-schenck-tillmann2019}
Heather A. Harrington, Nina Otter, Hal Schenck, and Ulrike Tillmann,
  \emph{Stratifying multiparameter persistent homology}, SIAM
  J. Appl. Algebra Geom. \textbf{3} (2019), no.\,3, 439--471.\ \
  \textsf{arXiv:math.AT/1708.07390v2}

\bibitem[HM05]{injAlg}
David Helm and Ezra Miller, \emph{Algorithms for graded injective
  resolutions and local cohomology over semigroup rings}, Journal of
  Symbolic Computation \textbf{39} (2005), 373--395.
  \textsf{arXiv:math.AC/0309256}

\bibitem[Hou03]{houle03}
David Houle, Jason Mezey, Paul Galpern, and Ashley Carter,
  \emph{Automated measurement of} \textsl{Drosophila} \emph{wings},
  BMC Evolutionary Biology \textbf{3} (2003), 25--37.

% \bibitem[KM14]{mesoprimary}
% Thomas Kahle and Ezra Miller, \emph{Decompositions of commutative
%   monoid congruences and binomial ideals}, Algebra and Number Theory
%   \textbf{8} (2014), no.~6, 1297--1364. \
%   doi:10.2140/ ant.2014.8-6\ \
%   \textsf{arXiv:math.AC/1107.4699v5}

% \bibitem[KMO16]{soccular}
% Thomas Kahle, Christopher O'Neill, and Ezra Miller, \emph{Irreducible
%   decomposition of binomial ideals}, Compositio Math.\ \textbf{152}
%   (2016), 1319--1332.  doi:10.1112/S0010437X16007272\ \
%   \textsf{arXiv:math.AC/1503.02607}

% \bibitem[KS90]{kashiwara-schapira1990}
% Masaki Kashiwara and Pierre Schapira, \emph{Sheaves on manifolds},
% % MR1074006 (92a:58132)
%   with a chapter in French by Christian Houzel, Grundlehren der
%   Mathematischen Wissenschaften [Fundamental Principles of
%   Mathematical Sciences], Vol.\,292.  Springer-Verlag, Berlin, 1990.
% % x+512 pp. ISBN: 3-540-51861-4

\bibitem[KS17]{kashiwara-schapira2017}
Masaki Kashiwara and Pierre Schapira, \emph{Persistent homology and
  microlocal sheaf theory}, preprint version of [KS18a].
  \textsf{arXiv:math.AT/1705.00955v3}

\bibitem[KS18]{kashiwara-schapira2018}
Masaki Kashiwara and Pierre Schapira, \emph{Persistent homology and
  microlocal sheaf theory}, J. of Appl. and Comput.\ Topology
  \textbf{2}, no.\,1--2 (2018), 83--113.\ \
  \textsf{arXiv:math.AT/1705.00955v6}

\bibitem[KS19]{kashiwara-schapira2019}
Masaki Kashiwara and Pierre Schapira, \emph{Piecewise linear sheaves},
  to appear in International Math. Res. Notices (IMRN), 2019.\ \
  \textsf{arXiv: math.AG/1805.00349v3}

\bibitem[KN09]{khripchenko-novikov2009}
N. S. Khripchenko and B. V. Novikov, \emph{Finitary incidence
  algebras}, Communications in Algebra \textbf{37} (2009), no.\,5,
  1670--1676.
% MR2526329 (2010c:16030)

\bibitem[Knu08]{knudson2008}
Kevin P. Knudson, \emph{A refinement of multi-dimensional
  persistence}, Homology Homotopy Appl. \textbf{10} (2008), no.\,1,
  259--281.

\bibitem[KMS06]{quivers}
Allen Knutson, Ezra Miller, and Mark Shimozono, \emph{Four positive
  formulae for type~$A$ quiver polynomials}, Inventiones Mathematicae
	\textbf{166} (2006), no.\,2, 229--325.
	\textsf{arXiv:math.AG/0308142}

\bibitem[Les15]{lesnick-interleav2015}
Michael Lesnick, \emph{The theory of the interleaving distance on
  multidimensional persistence modules},
% Foundations of Computational Mathematics
  Found.\ Comput.\ Math.~(2015), no.~15, 613--650.\ \
% https://doi.org/10.1007/s10208-015-9255-y
  doi:10.1007/s10208-015-9255-y\ \ \textsf{arXiv:math.CG/1106.5305}

\bibitem[LW15]{lesnick-wright2015}
Michael Lesnick and Matthew Wright, \emph{Interactive visualization of
  2-D persistence modules}, preprint, 2015.
  \textsf{arXiv:math.AT/1512.00180}

\bibitem[Lur17]{lurie2017}
Jacob Lurie, \emph{Higher algebra}, preprint, 2017.
  \textsf{http:/$\!$/www.math.harvard.edu/\~{}lurie/papers/ HA.pdf},
  dated August 10, 2017 and accessed 30 August 2017

\bibitem[MMc15]{madden-mcguire2015}
James J. Madden and Trevor McGuire, \emph{Neighbors, generic sets and
  Scarf-Buchberger hypersurfaces}, preprint, 2015.  
  \textsf{arXiv:math.CO/1511.08224}

\bibitem[Mil00]{alexdual}
Ezra Miller, \emph{The Alexander duality functors and local duality
  with monomial support}, Jour- nal of Algebra \textbf{231} (2000),
  180--234.

\bibitem[Mil02]{irredRes}
Ezra Miller, \emph{Cohen--Macaulay quotients of normal semigroup
  rings via irreducible resolutions}, Math. Res. Lett. \textbf{9}
  (2002), no.~1, 117--128.

\bibitem[Mil15]{fruitFlyModuli}
Ezra Miller, \emph{Fruit flies and moduli: interactions between
  biology and mathematics}, Notices of the American Math.\ Society
  \textbf{62} (2015), no.\,10, 1178--1184.  doi:10.1090/noti1290\ \
% http://dx.doi.org/10.1090/noti1290
% \textsf{arXiv:q-bio.QM/1508.05381}

\bibitem[Mil17]{qr-codes}
Ezra Miller, \emph{Data structures for real multiparameter persistence
  modules}, preprint, 2017.  \textsf{arXiv:math.AT/1709.08155}

\bibitem[Mil20a]{prim-decomp-pogroup}
Ezra Miller, \emph{Primary decomposition over partially ordered
  groups}, submitted, 2020.  \textsf{arXiv: math.AC/2008.00093}

\bibitem[Mil20b]{strat-conical}
Ezra Miller, \emph{Stratifications of real vector spaces from
  constructible sheaves with conical microsupport}, submitted, 2020.
  \textsf{arXiv:math.AT/2008.00091}

\bibitem[Mil20c]{essential-real}
Ezra Miller, \emph{Essential graded algebra over polynomial rings with
  real exponents}, submitted, 2020.  \textsf{arXiv:math.AC/2008.03819}

\bibitem[Mil20d]{functorial-multiPH}
Ezra Miller, \emph{Functorial summaries of multipersistence}, draft,
  2020.

\bibitem[MS05]{cca}
Ezra Miller and Bernd Sturmfels, \emph{Combinatorial commutative
  algebra}, Graduate Texts in Mathematics, Vol.\,227, Springer-Verlag,
  New York, 2005.

\bibitem[MT20]{primary-distance}
Ezra Miller and Ashleigh Thomas, \emph{Persistence distances via
  primary decomposition}, draft, 2020.

\bibitem[NR72]{nazarova-roiter}
L. A. Nazarova and A. V. Ro\u\i ter,
  \emph{Representations of partially ordered sets} (in Russian),
  Investigations on the theory of representations, Zap. Nau\v cn.\
  Sem.\ Leningrad.\ Otdel.\ Mat.\ Inst.\ Steklov.\ (LOMI) \textbf{28}
  (1972), 5--31.
% MR0340121 (49 #4877)

\bibitem[Oko16]{okounkov16}
Andrei Okounkov, \emph{Limit shapes, real and imagined},
  Bulletin of the American Math.\ Society \textbf{53} (2016), no.\,2,
  187--216.  doi:10.1090/bull/1512\ \
% http://dx.doi.org/10.1090/bull/1512

\bibitem[Oud15]{oudot2015}
Steve
% Y.
  Oudot, \emph{Persistence theory: from quiver representations
  to data analysis}, Mathematical Surveys and Monographs,
  Vol.\,209, Amer.\ Math.\ Society, Providence, RI, 2015.
% viii+218pp
% ISBN: 978-1-4704-2545-6
% MR3408277

\bibitem[PSV12]{patriarca-scolamiero-vaccarino2012}
Antonio Patriarca, Martina Scolamiero, and Francesco Vaccarino,
  \emph{A presentation of general multipersistence modules computable
  in polynomial time}, preprint, 2012.
  \textsf{arXiv:math.AT/ 1210.1932v1}

\bibitem[Puu18]{puuska18}
Ville Puuska, personal communications, 2017--2018.

% \bibitem[Rie40]{riesz40}
% Frigyes Riesz, \emph{Sur quelques notions fondamentales dans la th\'eorie
%   g\'en\'erale des op\'erations lin\'eaires}, Annals of
%   Math.~\textbf{41} (1940), 174--206.

%bibitem[CLRS\"O15]{sclro16}
\bibitem[SCL$^{+\!}$16]{scolamiero-chacholski-lundman-ramanujam-oberg16}
Martina Scolamiero, Wojciech Chach\'olski, Anders Lundman, Ryan
  Ramanujam, and Sebastian \"Oberg, \emph{Multidimensional persistence
  and noise},
% preprint, 2014.  \textsf{arXiv:math.AT$\!$/$\!$ 1505.06929v1}
% Foundations of Computational Mathematics
  Found.\ Comput.\ Math.~\textbf{17} (2017), no.\,6,
  1367--1406.\ \
% https://doi.org/10.1007/s10208-016-9323-y
  doi:10.1007/s10208-016-9323-y\ \ \textsf{arXiv:math.AT/1505.06929v1}

\bibitem[Shi97]{shiota97}
Masahiro Shiota, \emph{Geometry of Subanalytic and Semialgebraic
  Sets}, Progress in Mathematics, Vol.~150, Springer, New York, 1997.
  doi:10.1007/978-1-4612-2008-4
% http://dx.doi.org/10.1007/978-1-4612-2008-4

\bibitem[Tre09]{treumann2009}
David Treumann, \emph{Exit paths and constructible stacks},
  Compos. Math. \textbf{145} (2009), no.\,6, 1504--1532.
% MR2575092 (2010m:32034)

%vfill%%%%%%%%%%%%%%%%%%%%%%%%%%%%%%%%%%%%%%%%%%%%%%%%%%%%%%%%%%%%%%%%%%%
\end{thebibliography}
\end{document}